\definecolor{myOlive}{rgb}{.29,.28,.16}
\definecolor{myRed}{rgb}{.78,0,0}
\newtheorem{proposition}{Proposition}[section]
\newtheorem{theorem}[proposition]{Theorem}
\newtheorem{corollary}[proposition]{Corollary}
\newtheorem{lemma}[proposition]{Lemma}
\newtheorem*{theorem*}{Theorem}
\newtheorem*{proposition*}{Proposition}
\newtheorem*{lemma*}{Lemma}
\newtheorem*{corollary*}{Corollary}
\theoremstyle{definition}
\newtheorem{definition}[proposition]{Definition}
\newtheorem{problem}[proposition]{Problem}
\theoremstyle{remark}
\newtheorem{remark}[proposition]{Remark}
\newtheorem{example}[proposition]{Example}
\newcommand{\N}{\mathbb{N}}
\newcommand{\Z}{\mathbb{Z}}
\renewcommand{\H}{\mathcal{H}}
\newcommand{\lk}{\operatorname{lk}}
\renewcommand{\int}[1]{\operatorname{int}(#1)}
\newcommand{\Inv}[1]{#1^{-1}}
\newcommand{\GCD}{\operatorname{GCD}}
\renewcommand{\L}{\mathcal{L}}
\newcommand{\SL}{\mathcal{SL}}
\newcommand{\Sum}{\displaystyle \sum  }
\newcommand{\Prod}{\displaystyle \prod  }
\newcommand{\pref}[1]{(\ref{#1})}
\newcommand{\wt}{\operatorname{wt}}
\newcommand{\chris}[1]{\textcolor{magenta}{[Chris] #1}}
\newcommand{\anthony}[1]{\textcolor{blue}{[Anthony] #1}}
\newcommand{\cotto}[1]{\textcolor{violet}{[cotto] #1}}
\newcommand{\katherine}[1]{\textcolor{teal}{[Katherine] #1}}
\begin{document}
\title[Crossing changes and link homotopy]{How many crossing changes or Delta-moves does it take to get to a homotopy trivial link?}

\author[A.~Bosman]{Anthony Bosman}
\address{Department of Mathematics, Andrews University}
\email{bosman@andrews.edu}
\urladdr{andrews.edu/cas/math/faculty/bosman-anthony.html}

\author[C.~W.~Davis]{Christopher W.\ Davis}
\address{Department of Mathematics, University of Wisconsin--Eau Claire}
\email{daviscw@uwec.edu}
\urladdr{people.uwec.edu/daviscw}

\author[T.~Martin]{Taylor Martin}
\address{Department of Mathematics and Statistics, Sam Houston State University}
\email{taylor.martin@shsu.edu}
\urladdr{shsu.edu/academics/mathematics-and-statistics/faculty/martin.html}

\author[C.~Otto]{Carolyn Otto}
\address{Department of Mathematics, University of Wisconsin--Eau Claire}
\email{ottoa@uwec.edu}
\urladdr{uwec.edu/profiles/ottoa}

\author[K.~Vance]{Katherine Vance}
\address{Department of Mathematics and Computer Science, Simpson College}
\email{katherine.vance@simpson.edu}
\urladdr{simpson.edu/faculty-staff/katherine-vance/}

\date{\today}

\subjclass[2020]{57K10}

\begin{abstract}
The homotopy trivializing number, $n_h(L)$, and the Delta homotopy trivializing number, $n_\Delta(L)$, are invariants of the link homotopy class of $L$ which count how many crossing changes or Delta moves are needed to reduce that link to a homotopy trivial link. In 2022, Davis, Orson, and Park proved that the homotopy trivializing number of $L$ is bounded above by the sum of the absolute values of the pairwise linking numbers and some quantity $C_n$ which depends only on $n$, the number of components.  In this paper we improve on this result by using the classification of link homotopy due to Habegger-Lin to give a quadratic upper bound on $C_n$.  We employ ideas from extremal graph theory to demonstrate that this bound is close to sharp, by exhibiting links with vanishing pairwise linking numbers and whose homotopy trivializing numbers grows quadratically.   In the process, we determine the homotopy trivializing number of every 4-component link.
We also prove a cubic upper bound on the difference between the Delta homotopy trivializing number of $L$ and the sum of the absolute values of the triple linking numbers of $L$.
\end{abstract}

\maketitle


%




\section{Introduction and statement of results}
 Any link $L$ in $S^3$ can be reduced to the unlink by some sequence of crossing changes. If this can be done by changing only crossings where a component of $L$ crosses over itself, often called a \emph{self-crossing change}, then we say that $L$ is \emph{homotopy trivial}. If links $L$ and $J$ can be transformed into each other by self-crossing changes then we call $L$ and $J$ link homotopic.  Unlike the question of when two links are isotopic, which is famously difficult, link homotopy is classified by Habegger-Lin \cite{HL1}, building on work of Milnor \cite{M1}. 

The number of crossing changes needed to transform a link to the unlink is called its unlinking number.  This invariant has been the target of intense study; 
 see for example \cite{KM86, Kohn91, Kohn93,Likorish82, Yasutaka81}.  In \cite[Section 6]{DOP22} the second author, along with Park and Orson combine the unlinking number with the notion of link homotopy and introduce the \emph{homotopy trivializing number}, $n_h(L)$, the number of crossing changes needed to reduce $L$ to a homotopy trivial link.   In that paper they show that $n_h(L)$ is controlled by the pairwise linking number of $L$ together with the number of components of $L$.  

\begin{theorem*}[\cite{DOP22}, Theorem 1.7]
For any $n\in \N$ there is some $C_n\in \N$ so that for every  $n$-component link $L$, 
$$\Lambda(L)\le n_h(L)\le \Lambda(L)+C_n,$$
where $\Lambda(L)=\Sum_{i<j}|\lk(L_i, L_j)|$ is the sum of the absolute values of the pairwise linking numbers.
\end{theorem*}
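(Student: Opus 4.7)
The lower bound is immediate. A self-crossing change leaves every pairwise linking number unchanged, while a crossing change between distinct components $L_i$ and $L_j$ alters $\lk(L_i, L_j)$ by $\pm 1$ and leaves all other pairwise linking numbers fixed. Since pairwise linking numbers are link homotopy invariants and vanish on the unlink, at least $\sum_{i<j}|\lk(L_i,L_j)| = \Lambda(L)$ crossing changes are required to reach any homotopy trivial link.

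For the upper bound, the plan is to split the trivialization into two phases. In the first phase, perform exactly $\Lambda(L)$ crossing changes, each between a pair $L_i, L_j$ with nonzero linking number and chosen so as to decrease $|\lk(L_i, L_j)|$ by one. This yields a link $L'$ all of whose pairwise linking numbers vanish. It then suffices to produce a constant $C_n$, depending only on $n$, such that every such $L'$ satisfies $n_h(L') \le C_n$, since then $n_h(L)\le \Lambda(L)+C_n$.

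To establish this boundedness, I would invoke the Habegger--Lin classification of link homotopy. Their result presents the link homotopy classes of $n$-component string links with vanishing pairwise linking numbers as a finitely generated nilpotent group of nilpotency class at most $n-1$, with a finite generating set consisting of iterated commutators, each of which may be represented by a link lying just one crossing change away from the unlink. Working down the lower central series level by level, at each of the finitely many levels one can use a bounded number of carefully chosen crossing changes to trivialize that level's contribution to the link homotopy class of $L'$. Since there are only finitely many levels and finitely many generators per level, an inductive argument produces a finite bound $C_n$.

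The main obstacle is that the word length of an arbitrary element of a finitely generated nilpotent group is unbounded, so one cannot naively express an arbitrary link homotopy class as a bounded product of commutator generators. The DOP22 argument thus proceeds somewhat existentially: it uses the nilpotent structure to guarantee the existence of $C_n$ without producing an explicit formula. The delicate point is to verify that each crossing change used to kill top-level Milnor invariants does not introduce uncontrolled contributions to deeper levels of the filtration; once this is arranged, a level-by-level induction closes the argument. Making the resulting dependence on $n$ explicit, and in fact quadratic, is precisely the improvement undertaken in the present paper.
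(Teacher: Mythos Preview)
Your lower-bound argument is correct and is the standard one.

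For the upper bound, note first that this theorem is quoted from \cite{DOP22} and is not reproved in the present paper; the paper instead establishes the sharper explicit inequality $n_h(L)\le \Lambda(L)+2Q(L)\le \Lambda(L)+(n-1)(n-2)$ (Theorem~\ref{upper bound theorem main}), and explicitly remarks that its approach differs from that of \cite{DOP22}.

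Your sketch correctly identifies both the relevant structure (the Habegger--Lin classification and the nilpotent group $\H(n)$) and the central obstacle (unbounded word length in a finitely generated nilpotent group). However, you do not resolve that obstacle: the assertion that ``a level-by-level induction closes the argument'' once one checks that crossing changes ``do not introduce uncontrolled contributions to deeper levels'' is precisely the nontrivial content, and you have not supplied it. A naive level-by-level count does not obviously yield a bound independent of $L$, since the lower-central-series quotients $RF(m)_k/RF(m)_{k+1}$ are infinite abelian groups and the coefficients appearing there can be arbitrarily large. (A minor point: the basic commutator generators you describe require two crossing changes to undo, not one.)

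The paper's route is different and avoids the level-by-level bookkeeping entirely. The key is the normal form of Theorem~\ref{hackey idea}: every element of $RF(m)$ can be written as
\[
\prod_{k=0}^{m-1} x_{m-k}^{\alpha_{m-k}}\;\prod_{k=1}^{m-1}[\omega_k,x_k].
\]
The second product has a \emph{fixed} number $m-1$ of factors, independent of the element, and each factor $[\omega_k,x_k]$ can be trivialized in at most two crossing changes no matter how complicated $\omega_k$ is (Proposition~\ref{prop: basics of nhl}\,\pref{item:nh([x,w])}). Combined with the split exact sequence \pref{exact sequence} and induction on $n$, this gives the explicit quadratic bound directly. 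So while your outline is in the right spirit, the decisive idea---absorbing the entire higher-order part into boundedly many commutators of the special shape $[\omega_k,x_k]$ rather than trying to control each nilpotent level separately---is what is missing from your sketch and is what the paper supplies.
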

  Such a bound is surprising since linking numbers form only the first of a family of higher order Milnor invariants which classify link homotopy \cite{HL1},\cite{M1}.  This result indicates that these higher order invariants have only a bounded impact on the number of crossing changes needed to get to a homotopy trivial link.  While one could parse out a precise value of the constant $C_n$ produced by the techniques of \cite{DOP22}, actually doing so would require a detailed combinatorial analysis and would result in a very large bound. 
 We pose the following problem, on which we make significant progress. 
 \begin{problem}\label{prob: main}
 For any $n\in \N$ compute 
 $$C_n:=\max\{
 n_h(L)-\Lambda(L)
 \mid 
 L\text{ is an }n\text{-component link}\}.$$
 \end{problem}
 
 Our first main result follows a different approach than \cite{DOP22} and finds quadratic upper and lower bounds on $C_n$.  

\begin{theorem}\label{thm: main}
For all $n\ge 3$, 
$$
2\left\lceil\frac{1}{3}n(n-2)\right\rceil\le C_n\le (n-1)(n-2)
.$$
In particular, $C_3=2$ and $C_4=6$. 
\end{theorem}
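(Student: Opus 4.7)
The plan is to establish the upper and lower bounds separately using quite different tools, then derive the values of $C_3$ and $C_4$ from the fact that these bounds coincide for $n=3$ and $n=4$.

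For the upper bound $C_n \le (n-1)(n-2)$, the strategy is to invoke the Habegger-Lin classification of link homotopy. Every $n$-component link $L$ is link-homotopic to the closure of a pure string link, and link-homotopy classes of pure string links on $n$ strands form a nilpotent group $H(n)$ whose abelianization records the pairwise linking numbers. Since self-crossing changes do not alter the link-homotopy class at all, $n_h(L)$ effectively counts only the inter-component crossing changes needed to send $L$ to the trivial link-homotopy class in $H(n)$. I would first perform $\Lambda(L)$ such crossing changes to make all pairwise linking numbers vanish, so that the resulting string link $\sigma$ lies in the commutator subgroup $H(n)'$. The key step is then to produce a generating set of $H(n)'$ (in the nilpotent sense) of size $(n-1)(n-2)$, each of whose elements is realized by a single crossing change on a standard model. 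Expressing $\sigma$ in this generating set and undoing the generators one at a time would yield the bound. The count $(n-1)(n-2)$ should emerge naturally from the lower central series structure of Milnor's reduced free group on $n-1$ generators, which governs Habegger-Lin.

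For the lower bound $C_n \ge 2\lceil \tfrac{1}{3}n(n-2)\rceil$, I would construct explicit $n$-component links $L^{(n)}$ with vanishing pairwise linking numbers but with many non-trivial Milnor triple linking numbers $\bar\mu_{ijk}$. The construction should attach a Borromean-type clasper to each triple in a carefully chosen family $\mathcal{T}$ of $3$-element subsets of $\{1,\ldots,n\}$, achieving $\bar\mu_{ijk}(L^{(n)}) = 1$ precisely when $\{i,j,k\}\in\mathcal{T}$. Extremal graph theory enters through the choice of $\mathcal{T}$: a near-optimal triangle packing of $[n]$ yields $|\mathcal{T}| \sim \tfrac{1}{3}n(n-2)$, with each pair contained in at most one triple. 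Since any crossing change alters the vector of triple Milnor invariants in a restricted way, the total $\ell^1$-weight $\sum |\bar\mu_{ijk}|$ gives a lower bound on the number of inter-component crossing changes required, with the factor-of-$2$ multiplier arising because each inter-component crossing change must be cancelled by another (to keep the linking numbers vanishing in the final link, which must be link-homotopy trivial).

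The cases $n=3$ and $n=4$ then follow because the two bounds coincide: $(n-1)(n-2) = 2 = 2\lceil 1\rceil$ when $n=3$, attained by the Borromean rings, and $(n-1)(n-2)=6 = 2\lceil 8/3 \rceil$ when $n=4$, attained by a $4$-component link built from a triangle packing of $K_4$. The principal obstacle I anticipate is the upper bound: identifying the correct size-$(n-1)(n-2)$ generating set of $H(n)'$ and realizing each generator by a single crossing change requires a careful normal form analysis in the Habegger-Lin framework, and this is the combinatorial and topological heart of the theorem. The lower bound is more self-contained once the Milnor invariants are set up, but extracting the exact ceiling $\lceil \tfrac{1}{3}n(n-2)\rceil$ (rather than a weaker $\Theta(n^2)$ bound) is itself a delicate extremal graph theoretic calculation.
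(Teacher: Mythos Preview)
Your upper-bound strategy is broadly correct—Habegger--Lin and the nilpotent structure of $\H(n)$ are the right tools—but the specific mechanism is off: elements of the commutator subgroup cannot be realized by \emph{single} crossing changes, since each such change shifts a linking number by $\pm1$. The paper's device is instead a normal-form result in $RF(m)$ (Theorem~\ref{hackey idea}): every element is a product of generator-powers times exactly $m-1$ commutators $[\omega_k,x_k]$, and each such commutator has $n_h\le 2$. Iterating over the semidirect-product decomposition of $\H(n)$ yields $n_h(L)\le\Lambda(L)+2Q(L)$ with $Q(L)\le\binom{n-1}{2}$, hence the bound $(n-1)(n-2)$.

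The lower bound has a genuine gap. The claim that $\sum|\bar\mu_{ijk}|$ lower-bounds the number of crossing changes is false: a single crossing change between $L_i$ and $L_j$ can shift each $\bar\mu_{ijk}$ by an \emph{arbitrary} integer (see Lemma~\ref{cor: nhL=1}, where conjugates of $x_{12}$ carry free exponents on $x_{123},x_{124}$). What survives of your idea is only that \emph{some} crossing change must touch each nontrivial triple; via Mantel's theorem this gives $n_h(L)\ge 2\lfloor(n-1)^2/4\rfloor\sim\tfrac12 n^2$, which is exactly the paper's warm-up Proposition~\ref{prop: lower bound warmup} and is strictly weaker than the claimed $\tfrac23 n^2$. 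To reach $2\lceil\tfrac13 n(n-2)\rceil$ the paper constructs links whose every \emph{four}-component sublink has $n_h=6$ (relying on the detailed $4$-component classification of Section~\ref{sect: 4 component}) and then proves a new extremal result (Theorem~\ref{thm:min_weight_phi_n}) for weighted graphs in which every $4$-vertex subgraph has total weight at least $3$. Your triangle-packing arithmetic is also off: a maximal edge-disjoint packing in $K_n$ has $\sim\tfrac16 n^2$ triangles, not $\tfrac13 n(n-2)$, so even granting the flawed $\ell^1$ step the numbers would not match.
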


The upper bound we produce on $C_n$ comes from the following result,

\begin{restatable*}{theorem}{UpperBoundTheorem}
\label{upper bound theorem main}
If $L$ is an $n$-component link and 
\[Q(L) = \#\{(i,j)\mid 2\leq i+1<j\leq n\text{ and } \lk(L_i,L_j)= 0\}\]
then $n_h(L)\le \Lambda(L)+2Q(L)$.  
\end{restatable*}



In order to see that this should seem quite surprising, note that when $L$ has vanishing pairwise linking number, this theorem gives a very concrete upper bound on the number of crossing changes needed to reduce $L$ to a homotopy trivial link.

\begin{corollary}
If an $n$-component link has vanishing pairwise linking numbers, then \[n_h(L)\le (n-1)(n-2).
\]
\end{corollary}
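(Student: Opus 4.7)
The plan is to deduce the corollary directly from the preceding theorem by plugging in the hypothesis that every pairwise linking number vanishes. First I would observe that, under this hypothesis, the quantity $\Lambda(L)=\sum_{i<j}|\lk(L_i,L_j)|$ is simply zero, so the bound $n_h(L)\le \Lambda(L)+2Q(L)$ collapses to $n_h(L)\le 2Q(L)$. The whole job, then, is to bound $Q(L)$ from above by counting how many index pairs $(i,j)$ satisfy the defining inequality $2\le i+1<j\le n$.

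Next I would carry out the count. The inequality $2\le i+1<j\le n$ is equivalent to $1\le i$ and $i+2\le j\le n$, so for each fixed $i\in\{1,\dots,n-2\}$ the number of admissible $j$ is $n-i-1$. Summing gives
\[
\#\{(i,j):2\le i+1<j\le n\}=\sum_{i=1}^{n-2}(n-i-1)=(n-2)+(n-3)+\cdots+1=\binom{n-1}{2}=\frac{(n-1)(n-2)}{2}.
\]
Since, under the vanishing hypothesis, every such pair automatically satisfies $\lk(L_i,L_j)=0$ and hence contributes to $Q(L)$, I conclude that $Q(L)=\tfrac{(n-1)(n-2)}{2}$, and substituting into the theorem yields $n_h(L)\le 2\cdot\tfrac{(n-1)(n-2)}{2}=(n-1)(n-2)$, as claimed.

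There is really no obstacle here; the result is a one-line consequence of the preceding theorem once one performs the index count. The only subtlety is making sure that the somewhat odd-looking condition $2\le i+1<j\le n$ is interpreted correctly as ranging over all pairs with $j\ge i+2$ (rather than all pairs with $i<j$, which would double-count or include trivial pairs); I would include the explicit sum above so that this is transparent.
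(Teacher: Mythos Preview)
Your proposal is correct and is exactly the intended derivation: the paper states this corollary without proof because it follows immediately from Theorem~\ref{upper bound theorem main} by observing that $\Lambda(L)=0$ and that $Q(L)$ equals the full count $\binom{n-1}{2}=\tfrac{(n-1)(n-2)}{2}$ of pairs $(i,j)$ with $j\ge i+2$.
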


When enough pairwise linking numbers are non-zero, the invariant $Q(L)$ of Theorem~\ref{upper bound theorem main} vanishes, so that the homotopy trivializing number is determined by the pairwise linking numbers.

\begin{corollary}\label{cor: nonzero linking}
Let $L$ be an $n$-component link. 
 If $\lk(L_i,L_j)\neq 0$ for all $i,j$ with $|i-j|>1$, then $\Lambda(L) = n_h(L)$.  
\end{corollary}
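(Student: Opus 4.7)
The plan is to deduce Corollary~\ref{cor: nonzero linking} as an immediate consequence of Theorem~\ref{upper bound theorem main} together with the lower bound $\Lambda(L)\le n_h(L)$ already established by Davis, Orson, and Park. The first step is to unpack the definition of the quantity $Q(L)$ appearing in Theorem~\ref{upper bound theorem main}: it counts pairs $(i,j)$ satisfying $2\le i+1<j\le n$, i.e., ordered pairs with $i<j$ and $j-i\ge 2$, or equivalently $|i-j|>1$, for which $\lk(L_i,L_j)=0$. Under the hypothesis of the corollary every such linking number is nonzero, so no pair contributes to $Q(L)$, and therefore $Q(L)=0$.

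The second step is then purely formal: plugging $Q(L)=0$ into Theorem~\ref{upper bound theorem main} yields $n_h(L)\le \Lambda(L)+2Q(L)=\Lambda(L)$. Combining this with the reverse inequality $\Lambda(L)\le n_h(L)$ from the Davis--Orson--Park theorem quoted earlier gives $n_h(L)=\Lambda(L)$, as desired. Because the real content of the corollary is packaged inside Theorem~\ref{upper bound theorem main}, there is no genuine obstacle in this argument; the only point requiring care is matching the indexing convention in the definition of $Q(L)$ with the hypothesis, so that "$\lk(L_i,L_j)\ne 0$ whenever $|i-j|>1$" indeed forces $Q(L)$ to vanish rather than merely bounding it.
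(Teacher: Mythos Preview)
Your argument is correct and matches the paper's approach exactly: the paper simply notes that under the hypothesis the quantity $Q(L)$ from Theorem~\ref{upper bound theorem main} vanishes, so the upper bound becomes $n_h(L)\le\Lambda(L)$, which together with the lower bound $\Lambda(L)\le n_h(L)$ from \cite{DOP22} gives the result.
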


Our strategy to compute the homotopy trivializing number reveals a linear bound on $n_h(L)$ over all Brunnian links.  Recall that a link is called Brunnian if its every proper sublink is trivial.

\begin{restatable*}{corollary}{nhlForBruunian}\label{cor: nhl for brunnian}
If $n\ge 3$ and $L$ is an $n$-component Brunnian link, then $n_h(L)\le 2(n-2).$
\end{restatable*}

In \cite{DOP22} the homotopy trivializing number of any 3-component link $L$ is determined in terms of $\Lambda(L)$ along with Milnor's triple linking number, $\mu_{123}(L)$,$$n_h(L)=\begin{cases}\Lambda(L) &\text{ if }\Lambda(L)\neq 0,\\
2&\text{ if }\Lambda(L)=0\text{ and }\mu_{123}(L)\neq 0,
\\0&\text{otherwise}.\end{cases}$$
Our proof that $C_4=6$ passes through an argument that determines the homotopy trivializing number of every 4-component link.  The precise statement (Theorems~\ref{thm:nhl for linking number zero}, \ref{thm:linking greater one}, and \ref{thm:nonvanishing linking}) is too long to state here. Instead we present some  elements of this classification.  Here $ \overline\mu_I(L)$ is the Milnor number of $L$ associated with multi-index $I$. It is only well defined modulo the greatest common divisor (\(\GCD\)) of those $\overline\mu_J(L)$ with $J$ the result of deleting some terms from $I$.  As is convention, the first nonvanishing Milnor invariant is denoted $\mu_I(L)$ since it is well defined as an integer.  

\begin{theorem}[See Theorems~\ref{thm:nhl for linking number zero}, \ref{thm:linking greater one}, \ref{thm:nonvanishing linking}]\label{thm: 4-component sampler}
Let $L=L_1\cup L_2\cup L_3\cup L_4$ be a 4-component link.
\begin{itemize}
\item $n_h(L)-\Lambda(L)=6$ if and only if $\Lambda(L)=0$, none of $\mu_{123}(L)$, $\mu_{124}(L)$, $\mu_{134}(L)$, and $\mu_{234}(L)$  are equal to zero and none of $\overline\mu_{1234}(L)$, $\overline\mu_{1324}(L)$,  and $\overline\mu_{1234}(L)+\overline\mu_{1324}(L)$ are multiples of $\GCD(\mu_{123}(L), \mu_{124}(L), \mu_{134}(L), \mu_{234}(L))$. 
\item If $|\lk(L_1, L_2)|\ge 2$ and $\lk(L_3, L_4)\neq 0$, then $n_h(L)=\Lambda(L)$
\item If $\lk(L_1,L_2)$, $\lk(L_2, L_3)$ and $\lk(L_3, L_4)$ are all nonzero, then $n_h(L)=\Lambda(L)$
\item If any four linking numbers of $L$ fail to vanish, then $n_h(L)=\Lambda(L)$.
\end{itemize}
\end{theorem}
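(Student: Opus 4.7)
The plan is to prove Theorem~\ref{thm: 4-component sampler} as a corollary of the finer classification established in Theorems~\ref{thm:nhl for linking number zero}, \ref{thm:linking greater one}, and \ref{thm:nonvanishing linking}. The lower bound $\Lambda(L)\le n_h(L)$ is known, so bullets 2--4 reduce to the matching upper bound $n_h(L)\le\Lambda(L)$, while bullet 1 requires pinning down $n_h(L)=6$ two-sided.

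For bullet 4, the plan is to relabel the components so that every pair with vanishing linking number is adjacent in the ordering $1,2,3,4$, giving $Q(L)=0$ and hence $n_h(L)\le\Lambda(L)$ directly from Theorem~\ref{upper bound theorem main}. A short case check in $K_4$ confirms this is always possible when at most two of the six edges have zero weight: two zero edges either share a vertex (start the ordering at that vertex) or form a perfect matching (and then the path $1,2,3,4$ already contains both). Bullet 3 is more delicate, since the three given nonzero linkings could be the only ones, forcing $Q(L)=3$ in any chain ordering; here I would refine the proof of Theorem~\ref{upper bound theorem main} by performing crossing changes along the chain $L_1,L_2,L_3,L_4$ and choosing their signs and locations so that the higher-order Milnor invariants cancel across the chain rather than producing the $2Q(L)$ penalty.

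For bullet 2, the surplus power of $|\lk(L_1,L_2)|\ge 2$ is that two crossing changes between $L_1$ and $L_2$ can be paired so that their contributions to each triple invariant $\mu_{12k}$ cancel while $|\lk(L_1,L_2)|$ drops by $2$. Combining this with the independent reduction of $|\lk(L_3,L_4)|$ enabled by the second hypothesis, one assembles a sequence of $\Lambda(L)$ crossing changes that trivializes $L$ up to link homotopy. The main difficulty is keeping track of how each paired crossing change propagates into the quadruple Milnor invariants of the remaining index sets.

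For bullet 1, the upper bound $n_h(L)\le 6$ is immediate from Theorem~\ref{upper bound theorem main}: when $\Lambda(L)=0$ all three non-adjacent pairs have zero linking, so $Q(L)=3$. The lower bound is the principal obstacle. The plan is to record the effect of each crossing change between $L_i$ and $L_j$ as a vector of changes to the Milnor invariants $(\mu_{123},\mu_{124},\mu_{134},\mu_{234},\overline\mu_{1234},\overline\mu_{1324})$, working modulo the $\GCD$ indeterminacy that governs the quadruple invariants; the non-divisibility conditions on $\overline\mu_{1234}$, $\overline\mu_{1324}$, and their sum appear precisely because this indeterminacy is generated by the triple invariants, and the nonvanishing conditions rule out any shortcut where some $\mu_{ijk}$ is already zero at the outset. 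The hardest step is classifying the vectors reachable by at most five crossing changes and combining this with Habegger--Lin's classification to conclude that matching Milnor invariants implies link homotopy, so that no shorter sequence can succeed.
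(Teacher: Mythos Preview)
Your proposal contains a concrete error in the treatment of bullet 3. You claim that when $\lk(L_1,L_2)$, $\lk(L_2,L_3)$, $\lk(L_3,L_4)$ are the only nonzero linking numbers, ``$Q(L)=3$ in any chain ordering'', and therefore a refinement of Theorem~\ref{upper bound theorem main} is needed. This is false: the complement of a Hamiltonian path in $K_4$ is another Hamiltonian path. Concretely, if you reorder the components as $L_2,L_4,L_1,L_3$, the non-adjacent pairs in the new ordering are exactly $\{1,2\},\{2,3\},\{3,4\}$ in the old labels, all of which have nonzero linking, so $Q(L)=0$. The paper does precisely this (it writes the permutation $(1,3,4,2)$) and then Theorem~\ref{upper bound theorem main} finishes the job in one line. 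Your proposed ``refinement'' of the upper bound argument is unnecessary, and the obstacle you identify does not exist. Bullet 4 then follows, as you note, either directly from $Q=0$ or by reducing to bullet 3.

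More broadly, your plan for bullets 1 and 2 diverges substantially from the paper and is not developed enough to be convincing. The paper does not argue by tracking how crossing changes move a vector of Milnor invariants. Instead it converts the problem entirely into algebra in $\H(4)$: Proposition~\ref{prop: algegraic translation} shows $n_h(T)=k$ exactly when $T$ is a product of $k$ conjugates of the generators $x_{ij}^{\pm1}$, and Lemma~\ref{cor: nhL=1} computes each conjugacy class explicitly in the Habegger--Lin normal form~\pref{eqn:4-component classification}. The proofs of Theorems~\ref{thm:nhl for linking number zero}--\ref{thm:nonvanishing linking} then multiply out such products case by case and solve the resulting systems of equations in the exponents. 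Your ``classify the vectors reachable by at most five crossing changes'' for the lower bound in bullet 1, and your ``paired crossing changes cancelling triple invariants'' for bullet 2, are in spirit the same computation, but you give no mechanism for carrying it out; the paper's mechanism is this explicit conjugate calculus, and without it the hardest step you flag remains a gap.
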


\begin{figure}
     \centering
         \begin{tikzpicture}
         \node at (0,0){\includegraphics[width=.2\textwidth]{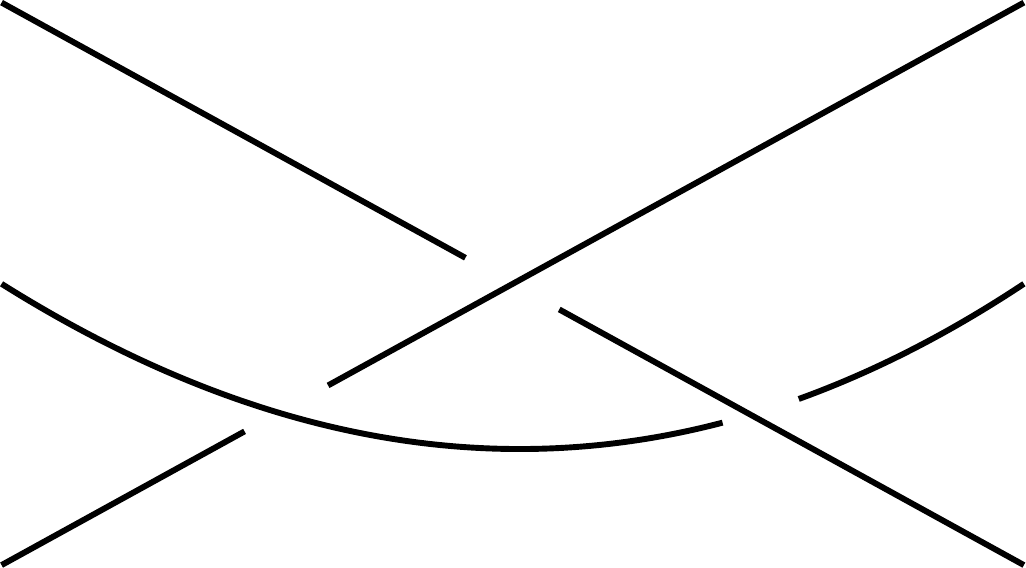}};
         \node at (4,0) {$\longrightarrow$};
         \node at (8,0){\includegraphics[width=.2\textwidth]{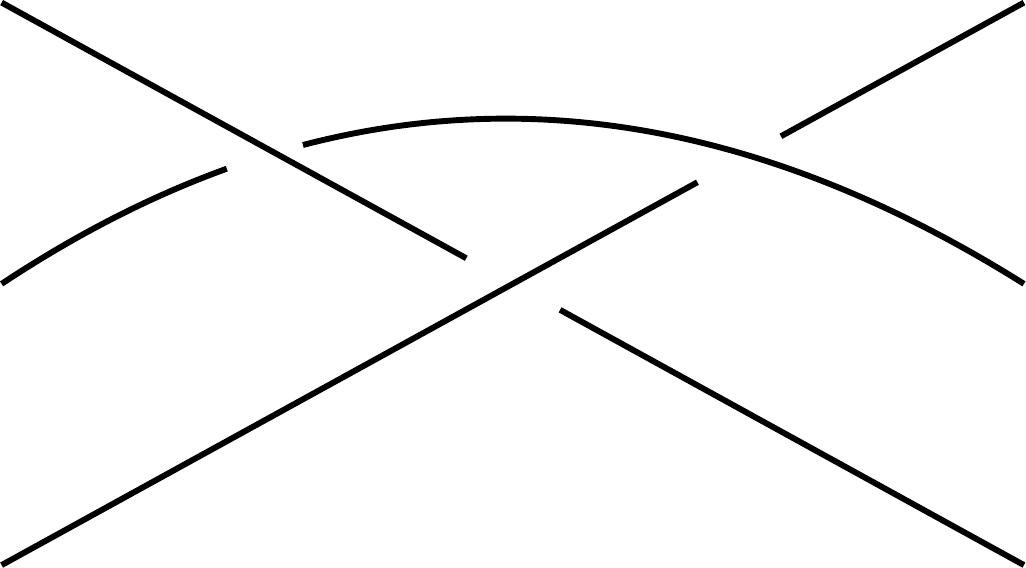}};
         \end{tikzpicture}
        \caption{The $\Delta$-move.}
        \label{fig: Delta Move}
\end{figure}

Another unknotting operation is the Delta-move (henceforth, $\Delta$-move) as pictured in Figure~\ref{fig: Delta Move}. By \cite[Theorem 1.1]{MN89}, any link with vanishing pairwise linking number can be undone by a sequence of $\Delta$-moves. If this $\Delta$-move involves strands of $L_i$, $L_j$ and $L_k$ with $i,j,k$ all distinct, then it changes the triple linking number $\mu_{ijk}$ by precisely 1.  As a consequence, the number of $\Delta$-moves needed to reduce a link with vanishing linking numbers to a homotopy trivial link is bounded below by the sum of the absolute values of the triple linking numbers.  Similarly to Theorem~\ref{thm: main}, we demonstrate an upper bound.  Let $n_{\Delta}(L)$ be the minimal number of $\Delta$-moves needed to transform a link $L$ to a homotopy trivial link and $\Lambda_3(L) = \Sum_{i<j<k} |\mu_{ijk}(L)|$ be the sum of the absolute values of the triple linking numbers of $L$. We show the following:

\begin{restatable*}{theorem}{thmMainDelta}
\label{thm: main Delta}
For any $n$-component link $L$ with vanishing pairwise linking numbers, 
$$\Lambda_3(L)\le n_\Delta(L)\le \Lambda_3(L)+\frac{2}{3}
(n^3-6n^2 + 11 n - 6).
$$
\end{restatable*}

\begin{corollary}\label{cor: vanishing Delta}
For any $n$-component link $L$ if $\Lambda(L)=\Lambda_3(L)=0$, then
$$n_\Delta(L)\le \frac{2}{3} (n^3 - 3n^2 + 2n - 6).$$
\end{corollary}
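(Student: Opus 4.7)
The proof is an immediate application of the preceding Theorem~\ref{thm: main Delta}. My plan is simply to invoke that theorem and feed in the additional hypothesis $\Lambda_3(L)=0$; no further argument is needed.

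In more detail: the hypothesis of Corollary~\ref{cor: vanishing Delta} includes $\Lambda(L)=0$, which is exactly the standing hypothesis of Theorem~\ref{thm: main Delta}. Applying that theorem to $L$ gives
$$\Lambda_3(L) \le n_\Delta(L) \le \Lambda_3(L)+\tfrac{2}{3}(n^3-3n^2+2n-6).$$
Substituting $\Lambda_3(L)=0$ collapses the left-hand side to $0$ and the right-hand side to $\tfrac{2}{3}(n^3-3n^2+2n-6)$, which is precisely the asserted bound. So the proof reduces to two lines of arithmetic once the theorem is in hand.

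There is no genuine obstacle here; all of the content sits in Theorem~\ref{thm: main Delta}, and this corollary is included to make the statement explicit. Its role is to highlight a universal cubic bound on $n_\Delta(L)$ that holds as soon as the pairwise and triple Milnor linking numbers all vanish, paralleling the quadratic bound on $n_h(L)$ obtained in the corollary to Theorem~\ref{upper bound theorem main} when only the pairwise linking numbers are assumed trivial. In both cases the message is that, once the first layer of Milnor data one might lower-bound the relevant unknotting-type invariant by is switched off, the remaining count is controlled by a function of $n$ alone, and the corollary isolates that function in the $\Delta$-move setting.
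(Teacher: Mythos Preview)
Your proof is correct and matches the paper's treatment: the corollary is stated immediately after Theorem~\ref{thm: main Delta} with no separate proof, since it is exactly the specialization $\Lambda_3(L)=0$ of that theorem's upper bound.
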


In order to prove that $C_n\ge 2\left\lceil\frac{1}{3}n(n-2)\right\rceil$ we need to exhibit links $L$ with $n_h(L)\ge 2\left\lceil\frac{1}{3}n(n-2)\right\rceil$.  We do so in Theorem~\ref{thm: large nhl using 4-component sublinks} by studying a link $L$ with vanishing pairwise linking numbers and whose every 4-component sublink has homotopy trivializing number 6. In order to compute the homotopy trivializing number of this link, we study any sequence of crossing changes transforming $L$ to a homotopy trivial link.  We associate to this sequence a weighted graph with vertices $\{v_1,\dots v_n\}$; the edge from $v_i$ to $v_j$ is weighted by half of the number of crossing changes performed between $L_i$ and $L_j$.  Note that by our choice of $L$, the graph spanned by any four vertices of $G$ must have weight at least 3.  We prove the following theorem, which we think will be of independent interest to a graph theorist.

\begin{theorem}\label{thm:min_weight_phi_n}
Let $G$ be a graph with non-negative integer weights on its edges.  If the total weight of the subgraph of $G$ spanned by any four vertices is at least 3, then the total weight of $G$ is at least $\left\lceil\frac{1}{3}n(n-2)\right\rceil
$.
\end{theorem}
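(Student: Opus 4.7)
The plan is to proceed by induction on $n$, the number of vertices of $G$, with base case $n = 4$. The base case is immediate: the hypothesis applied to the unique 4-vertex subgraph gives $w(G) \geq 3 = \lceil 4 \cdot 2/3 \rceil$. For the inductive step, set $f(n) := \lceil n(n-2)/3 \rceil$ and take $n \geq 5$.

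The key observation is a vertex-deletion averaging argument. For any vertex $v$, the induced subgraph $G - v$ has $n - 1$ vertices and still satisfies the hypothesis, since every 4-vertex subgraph of $G - v$ is also a 4-vertex subgraph of $G$. By induction, $w(G - v) \geq f(n-1)$. Summing over all $n$ choices of $v$ and noting that each edge of $G$ lies in exactly $n - 2$ of the subgraphs $G - v$ (one for every vertex distinct from its endpoints), we get
\[
(n-2)\,w(G) \;=\; \sum_{v \in V(G)} w(G - v) \;\geq\; n\, f(n-1).
\]
Since $w(G)$ is a non-negative integer, this yields $w(G) \geq \lceil n\,f(n-1)/(n-2)\rceil$.

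The remaining step, which I expect to be the only real obstacle, is to verify $\lceil n\,f(n-1)/(n-2)\rceil \geq f(n)$, equivalently $n\, f(n-1) > (n-2)(f(n) - 1)$. I would handle this with a short case analysis on $n \bmod 3$. A direct computation shows that the quantity $3[(n-2) f(n) - n\, f(n-1)]$ equals $n$, $2n-2$, or $0$ according to whether $n \equiv 0, 1,$ or $2 \pmod 3$. The maximum, $2n - 2$, is strictly less than $3(n-2)$ for every $n \geq 5$, which is precisely the needed inequality. Interestingly the averaging bound is an equality, $(n-2) f(n) = n f(n-1)$, whenever $n \equiv 2 \pmod 3$, so the integer rounding of $w(G)$ is essential for the other two residues and not merely decorative.

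In summary, the argument is conceptually simple—average the induction hypothesis over single-vertex deletions—with all the delicacy concentrated in the final modular-arithmetic check that the ceilings line up correctly.
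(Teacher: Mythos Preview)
Your proof is correct and takes a genuinely different route from the paper's.

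The paper first proves a structural lemma: in any $G\in\Phi_n$ with $n\ge 5$ there is a vertex $v$ with $d(v)\ge\frac{2}{3}n-\frac{4}{3}$. The proof of this lemma is a short but somewhat delicate case analysis (handling separately the case of a weight-zero triangle, then locating a weight-one edge inside $N_v$ and exploiting it). The induction then removes \emph{that specific} high-degree vertex and checks $\left\lceil\frac{1}{3}(n-1)(n-3)\right\rceil+\left\lceil\frac{2n-4}{3}\right\rceil=\left\lceil\frac{1}{3}n(n-2)\right\rceil$.

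Your averaging argument sidesteps the lemma entirely: instead of finding one good vertex, you sum the inductive bound over all $n$ single-vertex deletions and use the double-counting identity $\sum_v w(G-v)=(n-2)\,w(G)$. This is more elementary and shorter. The trade-off is that your bound $(n-2)\,w(G)\ge n\,f(n-1)$ is a priori weaker than the paper's $w(G)\ge f(n-1)+\left\lceil\frac{2n-4}{3}\right\rceil$, and you need the integrality of $w(G)$ together with the modular check (which you carry out correctly; note in particular that the case $n\equiv 1\pmod 3$ requires $n\ge 5$, which is exactly where your induction starts) to close the gap. The paper's approach, by contrast, isolates the combinatorial content in a reusable degree lemma that may be of independent interest, and also makes the extremal structure more visible.
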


The fact that $2\left\lceil\frac{1}{3}n(n-2)\right\rceil\le C_n$ will follow. Forgetting the link theory context, we pose the following graph theoretic problem motivated by the above result.

\begin{problem}
Fix any integers $n$, $k$, and $w$.  Define $\Phi(n,k,w)$ to be the set of all graphs $G$ with non-negative integer weight on their edges which satisfy that the subgraph spanned by any $k$ vertices of $G$ has total weight at least $w$.  Let $\phi(n,l,w)$ to be the minimal total weight among all $G\in \Phi(n,k,w)$. Determine $\phi(n,k,w)$.   
\end{problem}

When $w=1$, this is essentially determined by a classical theorem of extremal graph theory called Tur\'an's theorem, which determines the graph on $n$-vertices having the maximal number of edges but not containing a $k$-vertex clique.  See \cite{Turan1941} or, for a more modern treatment,  \cite[Theorem 12.2]{GraphsAndDigraphs}.

\subsection{Outline of the paper}

Habiro \cite{Hab1} gives a family of moves called clasper surgery which generalizes both crossing changes and the $\Delta$-move.  In Section~\ref{sect: clasper} we recall this language and use it to verify the intuitive fact that $n_h(L)$ and $n_\Delta(L)$ are invariants of the link homotopy class of $L$.  As a consequence we can take advantage of the classification of link homotopy due to Habegger-Lin in terms of the group $\mathcal{H}(n)$ of string links up to link homotopy.  In Section~\ref{sect: link homotopy classification} we recall elements of this classification and study how $n_{h}$ and $n_{\Delta}$ interact with the structure of this group. The group $\H(n)$ decomposes as a semi-direct product of a sequence of nilpotent groups, called reduced free groups.  By working over this decomposition, in Section~\ref{sect:bounding homotopy trivialing number} we prove half of Theorem~\ref{thm: main}, that $C_n\le (n-1)(n-2)$. In Section~\ref{sect: Delta change} we use a similar logic applied to the $\Delta$-move to prove Theorem~\ref{thm: main Delta}.     When $n=4$,  $\H(4)$ is small enough that we can check the homotopy trivializing numbers of every element of the group.  We do so in Section~\ref{sect: 4 component}, proving much more than is stated as Theorem~\ref{thm: 4-component sampler}.  Finally, in Section~\ref{sect: links with large n_h} we prove the graph theoretic result, Theorem~\ref{thm:min_weight_phi_n}, and use it to complete the proof of Theorem~\ref{thm: main}.  
 \subsection{Acknowledgments}  During the final revisions of this paper, the first, second, third, and fifth authors were supported by National Science Foundation Grant no.~DMS-1928930 while they participated in a program hosted by the Simons-Laufer Mathematical Sciences Institute (Formerly Mathematical Sciences Research Institute) in Berkeley, California during the Summer of 2025.   We would also like to thank an anonymous, thorough, and extremely helpful referee for improving this document considerably.  

\section{Clasper surgery }\label{sect: clasper}

In \cite{Hab1}, Habiro introduces the notion of clasper surgery. 
 These moves provide a useful language for crossing changes and $\Delta$-moves.  We use the following definition from \cite[Definition 2.1]{KiMi2023}.  

\begin{definition}
An embedded disk $\tau$ in $S^3$ is called a \emph{simple tree clasper} for a link $L$ if $\tau$ decomposes as a union of bands and disks satisfying the following
\begin{enumerate}
\item Each band connects two distinct disks and each disk is attached to either one or three bands.  A disk attached to only one band is called a \emph{leaf}.
\item $L$ intersects $\tau$ transversely and each point of $L\cap \tau$ is interior to a leaf.
\item Each leaf intersects $L$ transversely in exactly one point.
\end{enumerate}
See Figure~\ref{fig:ClasperDisk} for a generic picture.  A $C_k$-tree is a simple tree clasper with exactly $k+1$ leaves.  Notice that a $C_k$-tree can be reconstructed from its disks together with a single framed arc along each band.  Thus, we will record a clasper as a union of disks and (framed) arcs in between, as in Figure~\ref{fig:ClasperArc}. When no framing is specified, we impose the blackboard framing. 
\end{definition}

\begin{figure}
     \centering
     \begin{subfigure}[b]{0.25\textwidth}
         \centering
         \begin{tikzpicture}
         \node at (0,0){\includegraphics[height=.45\textwidth]{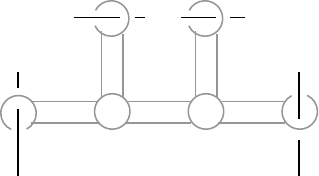}};
         \end{tikzpicture}
         \caption{A simple tree clasper.}
         \label{fig:ClasperDisk}
     \end{subfigure}
     \begin{subfigure}[b]{0.4\textwidth}
     \centering
         \begin{tikzpicture}
         \node at (0,0){\includegraphics[height=.3\textwidth]{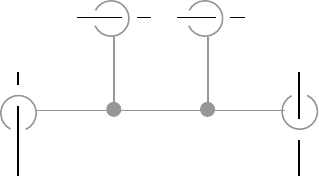}};
         \end{tikzpicture}
         \caption{A clasper as framed arcs and disks.}
         \label{fig:ClasperArc}
     \end{subfigure}
     \begin{subfigure}[b]{0.25\textwidth}
         \centering
         \begin{tikzpicture}
         \node at (0,0){\includegraphics[height=.45\textwidth]{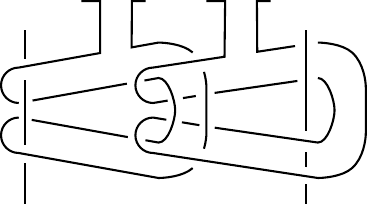}};
         \end{tikzpicture}
         \caption{Clasper surgery.}
         \label{fig:ClasperSurgery}
     \end{subfigure}
     
        \caption{}
        \label{fig: Claspers}
\end{figure}

Given a $C_k$-tree $\tau$ for a link $L$, the result of clasper surgery along $\tau$ is given in Figure~\ref{fig:ClasperSurgery}.  A crossing change can be expressed as clasper surgery along a $C_1$-tree and a $\Delta$-move as clasper surgery along a $C_2$-tree.

 We can use the language of claspers to define the \emph{homotopy trivializing number}.  A link $L$ can be reduced to a homotopy trivial link in $k$ crossing changes if there is a collection of $k$ disjoint $C_1$-tree for $L$ so that the result of surgery along these claspers is homotopy trivial.  Then, $n_h(L)$ is the minimal such value of $k$.  Similarly, $n_\Delta(L)$ is defined using $C_2$-trees.   The $\Delta$-move is done by a surgery along a single $C_2$-tree and conversely surgery along a $C_2$-tree surgery can be done by a $\Delta$-move.  Figure~\ref{fig: Delta by Clasper surgery} reveals how to perform the $\Delta$-move via surgery along a $C_2$-tree, and Figure~\ref{fig: Clasper surgery by Delta} shows how surgery along a $C_2$-tree surgery can be undone by a $\Delta$-move.  See also \cite[Section 7.1]{Hab1}. 
 Thus, we define $n_{\Delta}(L)$ to be the minimal number of surgeries along $C_2$-trees  needed to transform $L$ to a homotopy trivial link. It follows that $n_h(L)$ and $n_\Delta(L)$ are invariant under link homotopy.

\begin{figure}
     \centering
         \begin{subfigure}[b]{0.3\textwidth}
         \centering
         \begin{tikzpicture}
         \node at (0,0){\includegraphics[width=.9\textwidth]{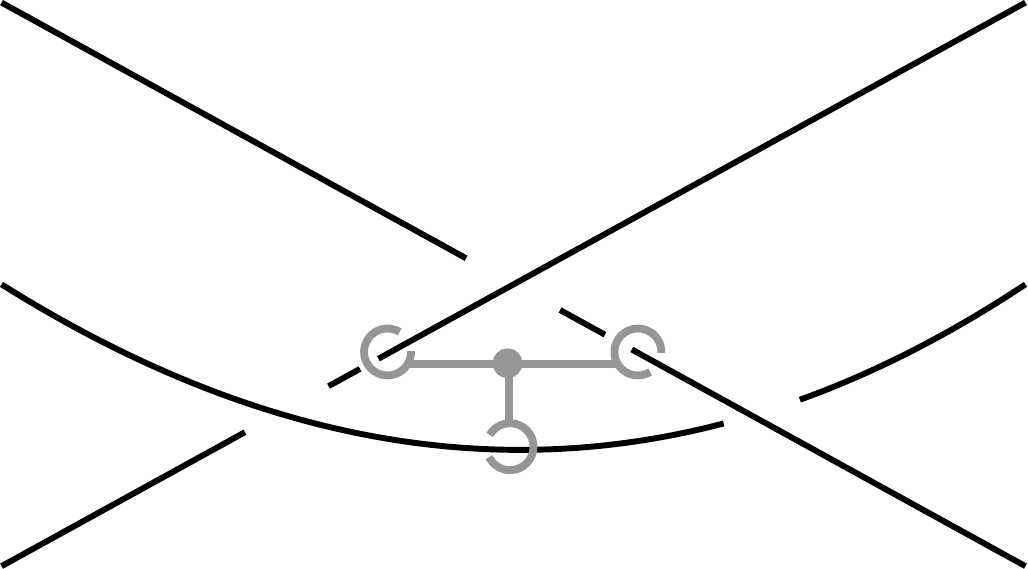}};
         \end{tikzpicture}
         \caption{}
         \label{fig: Clasper for Delta}
     \end{subfigure}
         \begin{subfigure}[b]{0.3\textwidth}
         \centering
         \begin{tikzpicture}
         \node at (0,0) {\includegraphics[width=.9\textwidth]{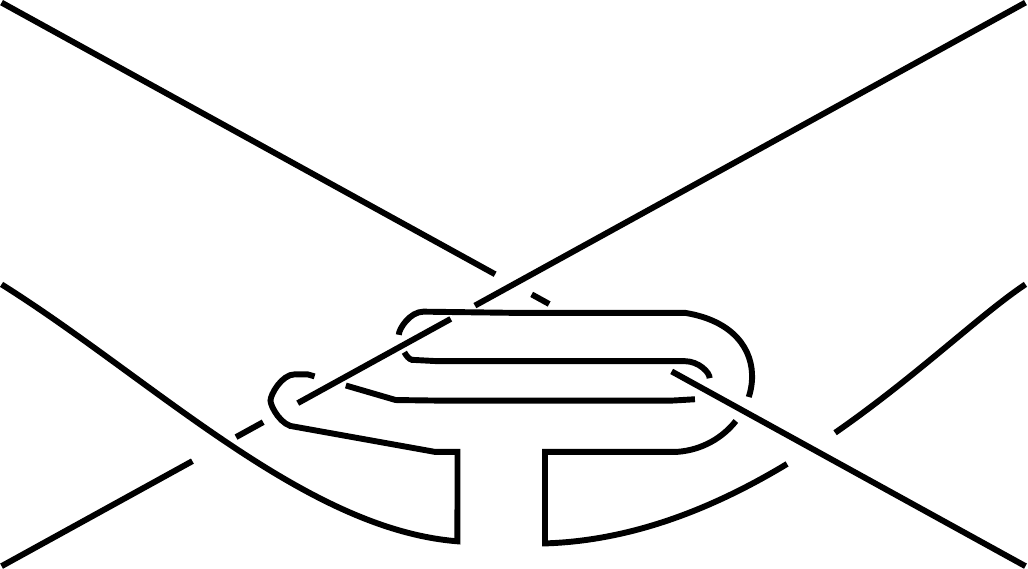}};
         \end{tikzpicture}
         \caption{}
         \label{fig: Clasper for Delta surgery}
     \end{subfigure}
     \begin{subfigure}[b]{0.3\textwidth}
         \centering
         \begin{tikzpicture}
         \node at (0,0) {\includegraphics[width=.9\textwidth]{DeltaMoveAfter}};
         \end{tikzpicture}
         \caption{}
         \label{fig: Clasper for Delta done}
     \end{subfigure}
        \caption{Left to right: \pref{fig: Clasper for Delta} A $C_2$-tree which realizes the $\Delta$-move.  \pref{fig: Clasper for Delta surgery} Performing clasper surgery.  \pref{fig: Clasper for Delta done} After an isotopy we get the result of the $\Delta$-move. }
        \label{fig: Delta by Clasper surgery}
\end{figure}

\begin{figure}
     \centering
         \begin{subfigure}[b]{0.22\textwidth}
         \centering
         \begin{tikzpicture}
         \node at (0,0){\includegraphics[width=.9\textwidth]{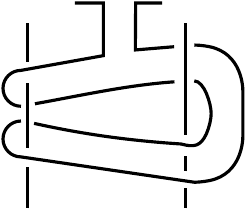}};
         \end{tikzpicture}
         \caption{}
         \label{fig: C2Surgery}
     \end{subfigure}
         \begin{subfigure}[b]{0.22\textwidth}
         \centering
         \begin{tikzpicture}
         \node at (0,0) {\includegraphics[width=.9\textwidth]{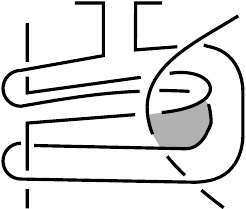}};
         \end{tikzpicture}
         \caption{}
         \label{fig: C2Surgery Delta Locus}
     \end{subfigure}
     \begin{subfigure}[b]{0.22\textwidth}
         \centering
         \begin{tikzpicture}
         \node at (0,0) {\includegraphics[width=.9\textwidth]{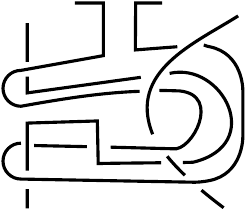}};
         \end{tikzpicture}
         \caption{}
         \label{fig: C2Surgery Delta Done}
     \end{subfigure}
     \begin{subfigure}[b]{0.22\textwidth}
         \centering
         \begin{tikzpicture}
         \node at (0,0) {\includegraphics[width=.9\textwidth]{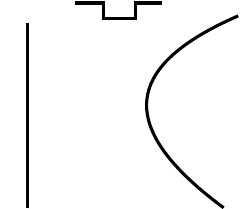}};
         \end{tikzpicture}
         \caption{}
         \label{fig: C2Surgery Delta undone}
     \end{subfigure}
        \caption{Left to right: \pref{fig: C2Surgery} The result of $C_2$-clasper surgery.  \pref{fig: C2Surgery Delta Locus} After an isotopy we see a place to perform a $\Delta$-move.  \pref{fig: C2Surgery Delta Done} Performing the $\Delta$-move.  \pref{fig: C2Surgery Delta undone} An isotopy reduces this to the trivial tangle.  }
        \label{fig: Clasper surgery by Delta}
\end{figure}

\begin{theorem}\label{thm: homotopy invariance}
If $L$ and $J$ are link homotopic, then $n_h(L) = n_h(J)$ and $n_\Delta(L) = n_\Delta (J)$.
\end{theorem}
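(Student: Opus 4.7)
The plan is to reduce to the case where $L$ and $J$ differ by a single self-crossing change, since link homotopy is generated by ambient isotopy together with such moves, and both $n_h$ and $n_\Delta$ are obviously preserved by isotopy. In the clasper language, a self-crossing change is realized by a $C_1$-clasper $\sigma$ for $L$ whose two leaves lie on the same component. It then suffices to prove $n_h(L)\le n_h(J)$, since the reverse inequality follows by symmetry: crossing changes are involutive, so the same $\sigma$ converts $J$ back into $L$.

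For this direction, I would choose a minimal collection $\{\tau_1,\dots,\tau_k\}$ of $k=n_h(J)$ disjoint $C_1$-claspers for $J$ whose simultaneous surgery yields a homotopy trivial link $J'$. A generic small perturbation makes $\sigma$ disjoint from every $\tau_i$ in $S^3$. Because disjoint claspers modify the link only inside disjoint neighborhoods, the order of surgery does not matter: applying surgery along $\tau_1,\dots,\tau_k$ to $L$ (which equals the result of surgery along $\sigma$ on $J$) produces the same link $X$ as applying surgery along $\sigma$ to $J'$. Since $\sigma$ is disjoint from each $\tau_i$, the link $J'$ coincides with $J$ in a neighborhood of $\sigma$, so the two leaves of $\sigma$ still meet the same component of $J'$. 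Hence surgery along $\sigma$ on $J'$ is a self-crossing change performed on a homotopy trivial link, so $X$ is itself homotopy trivial. This exhibits $\{\tau_1,\dots,\tau_k\}$ as a collection of $k$ disjoint $C_1$-claspers for $L$ whose surgery is homotopy trivial, giving $n_h(L)\le k = n_h(J)$.

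The argument for $n_\Delta$ proceeds identically after replacing ``$C_1$-clasper'' by ``$C_2$-clasper'' in the choice of $\{\tau_1,\dots,\tau_k\}$, while $\sigma$ remains a $C_1$-clasper realizing the self-crossing change; disjoint $C_1$- and $C_2$-clasper surgeries still commute for the same neighborhood-disjointness reason. The main technical point to nail down is that the self-clasper property of $\sigma$ survives surgery along the disjoint $\tau_i$'s. This holds because the decomposition of the link into components is determined by the strands away from the surgery sites, which are untouched near $\sigma$, so the two leaves of $\sigma$ continue to hit a single component of $J'$. Once this disjointness and component-preservation bookkeeping is set up, the rest of the proof is formal.
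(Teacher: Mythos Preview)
Your argument is correct and follows essentially the same approach as the paper: make the self-crossing clasper(s) disjoint from a minimal trivializing collection, use that disjoint clasper surgeries commute, and observe that a self-clasper remains a self-clasper after the disjoint surgeries so the resulting link is still homotopy trivial. The only cosmetic difference is that you reduce to a single self-crossing change while the paper handles the entire collection of self-claspers at once; also note a small slip in your labeling (you call $\sigma$ a clasper ``for $L$'' but then use it as a clasper for $J$ whose surgery yields $L$).
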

\begin{proof}
If $L$ and $J$ are link homotopic, there is a collection of $C_1$-trees $\tau$ for $J$, each of which intersects only one component of $J$, so that changing $J$ by surgery along $\tau$ results in $L$.  As a positive crossing change can be undone by a negative crossing change, there is a collection of $C_1$-trees $\overline{\tau}$ for $L$ so that surgery along $\overline{\tau}$ results in $J$.   

Suppose that $n_h(L)=k$.  Then there is a collection of $k$ many $C_1$-trees, $\tau'$, for $L$ so that performing surgery along $\tau'$ changes $L$ to a homotopy trivial link, $L'$.  We may now isotope ${\tau}'$ so that it is disjoint from $\overline{\tau}$. By performing surgery along $\overline{\tau}$, we may now think of $\tau'$ as a sequence of crossing changes for $J$.  For the sake of clarity call this new collection $\tau'_J$, and the link resulting from surgery $J'$. 

Summarizing, we now have a collection of $C_1$-trees $\tau\cup\tau'_J$ for $J$ so that surgery along this collection results in the homotopy trivial link $L'$.  Since the order in which we perform surgery does not affect the result, we may first perform surgery along $\tau'_J$ to get a new link $J'$ and then change $J'$ by surgery along $\tau$.  As each component of $\tau$ intersects only one component of $J'$ it follows that $J'$ is link homotopic to $L'$, and so is itself homotopy trivial.  

We have now produced a collection of $k$ many $C_1$-trees $\tau'_J$ for $J$ so that surgery along $\tau'_J$ results in a homotpy trivial link.  Thus, $n_h(J)\le k=n_h(L)$.  The reverse inequality follows the same argument, as does the proof that $n_\Delta(L)=n_{\Delta}(J)$.  
\end{proof}

\section{String links and Habegger-Lin's classification of link homotopy}\label{sect: link homotopy classification}

Let $\mathcal{LH}_n$ be the set of $n$-component links up to link homotopy.  By Theorem~\ref{thm: homotopy invariance}, $n_h(L)$ depends only on the equivalence class of $L$ in $\mathcal{LH}_n$.  See also \cite[Remark 6.3]{DOP22}.  As a consequence, we can appeal to the classification of links up to link homotopy due to Habegger-Lin \cite{HL1} as well as an earlier work of Goldsmith \cite{Goldsmith73} in order to organize our argument.  In this section we recall some elements of this classification and explain the strategy we will follow.    

\begin{definition}
Let $p_1,\dots, p_n$ be distinct points interior to the unit disk $D^2$.  An $n$-component \emph{string link} $T$ is a collection of disjoint embedded  arcs $T_1\cup\dots\cup T_n$ in $D^2\times[0,1]$ with $T_i$ running from $p_i\times\{0\}$ to $p_i\times\{1\}$.  Two string links are called link homotopic if one can be transformed to the other by a sequence of self-crossing changes. The set of $n$-component string links up to ambient isotopy rel. boundary is denoted by $\SL_n$, and $\H(n)$ is the set of $n$-component string links up to link homotopy. A string link is homotopy trivial if it is link homotopic to the trivial string link.
\end{definition}

The notions of clasper surgery, crossing change, and $\Delta$-moves all extend to string links, and so the definitions of $n_h$ and $n_{\Delta}$ extend in the obvious way to string links where they depend only on the class of a link in $\H(n)$.


 \begin{figure}[h]
\subcaptionbox{$A*B$: The result of stacking string links $A$ and $B$.\label{fig: stacking}}[0.3\linewidth]{
 \begin{tikzpicture}[scale=1.2]

\draw[thick, black] (0.2,-0.2) -- (0.2, 2.2);
\draw[thick, black] (0.5,-0.2) -- (0.5, 2.2);
\draw[thick, black] (0.8,-0.2) -- (0.8, 2.2);

\draw[thick, black] (1.8,-0.2) -- (1.8, 2.2);
\draw[thick, black] (2.1,-0.2) -- (2.1, 2.2);
\draw[thick, black, fill=white] (0,0) rectangle (2.3,0.8);

\draw[thick, black, fill=white] (0,1.2) rectangle (2.3,2);

\node[inner sep=0pt] at (1.17,0.4) {$B$};
\node[inner sep=0pt] at (1.17,1.63) {$A$};
\node[inner sep=0pt] at (1.34,-0.13) {\scalebox{1}{$\dots$}};
\node[inner sep=0pt] at (1.34,1) {\scalebox{1}{$\dots$}};
\node[inner sep=0pt] at (1.34,2.13) {\scalebox{1}{$\dots$}};

\node[above] at (1, -0.3) {$\,$};

\end{tikzpicture}
}
\hfill
\subcaptionbox{{The closure $\widehat T$ of a string link $T$ together with a $d$-base, $D$.\label{fig: closure}}}[0.3\linewidth]{
\begin{tikzpicture}[scale=0.6]

\draw (-.2,-.5)--(2.2,-.5) arc(-90:90:.2)--(-.2,-.1)arc(90:270:.2);
\draw[fill=gray, opacity=.5] (-.2,-.5)--(2.2,-.5) arc(-90:90:.2)--(-.2,-.1)arc(90:270:.2);
\node[left] at(-.2,-.3) {$D$};
\draw[fill=black](.2,-.3) circle (.04);
\draw[fill=black](.5,-.3) circle (.04);
\draw[fill=black](.8,-.3) circle (.04);
\draw[fill=black](1.8,-.3) circle (.04);
\draw[fill=black](2.1,-.3) circle (.04);

\draw[thick, black] (0.2,-0.3) -- (0.2, 1);
\draw[thick, black] (0.2,-0.5) -- (0.2, -.7);
\draw[thick, black] (0.5,-0.3) -- (0.5, 1);
\draw[thick, black] (0.5,-0.5) -- (0.5, -.7);
\draw[thick, black] (0.8,-0.3) -- (0.8, 1);
\draw[thick, black] (0.8,-0.5) -- (0.8, -.7);

\draw[thick, black] (1.8,-0.3) -- (1.8, 1);
\draw[thick, black] (1.8,-0.5) -- (1.8, -.7);
\draw[thick, black] (2.1,-0.3) -- (2.1, 1);
\draw[thick, black] (2.1,-0.7) -- (2.1, -.5);
\draw[thick, black, fill=white] (0,0) rectangle (2.3,0.8);


\node[inner sep=0pt] at (1.2,0.4) {$T$};
\node[inner sep=0pt] at (2.4,-1.5) {\scalebox{1}{$\vdots$}};
\node[inner sep=0pt] at (2.4, 2.2) {\scalebox{1}{$\vdots$}};

\draw[thick, black] (2.1, 1) arc (-180:-360:.2);
\draw[thick, black] (1.8, 1) arc (-180:-360:{.2+.3});
\draw[thick, black] (.8, 1) arc (-180:-360:{.2+.3+1});
\draw[thick, black] (.5, 1) arc (-180:-360:{.2+.3+1+.3});
\draw[thick, black] (.2, 1) arc (-180:-360:{.2+.3+1+.3+.3});

\draw[thick, black] (2.1, -0.7) arc (180:360:.2);
\draw[thick, black] (1.8, -0.7) arc (180:360:{.2+.3});
\draw[thick, black] (.8, -0.7) arc (180:360:{.2+.3+1});
\draw[thick, black] (.5, -0.7) arc (180:360:{.2+.3+1+.3});
\draw[thick, black] (.2, -0.7) arc (180:360:{.2+.3+1+.3+.3});

\draw[thick, black] ({2*2.3-0.2},-0.7) -- ({2*2.3-0.2}, 1);
\draw[thick, black] ({2*2.3-0.5},-0.7) -- ({2*2.3-0.5}, 1);
\draw[thick, black] ({2*2.3-0.8},-0.7) -- ({2*2.3-0.8}, 1);

\draw[thick, black] ({2*2.3-1.8},-0.7) -- ({2*2.3-1.8}, 1);
\draw[thick, black] ({2*2.3-2.1},-0.7) -- ({2*2.3-2.1}, 1);

\end{tikzpicture}

}
\hfill
\subcaptionbox{{$\phi\colon RF(n-1)\to \mathcal{H}(n)$ sends $x_i$ to the string link $x_{in}$ above.  \label{fig: psi(x_i)}}}[0.3\linewidth]{
\begin{tikzpicture}[scale=1]

\node[above] at (0.2, 2) {$T_1$};
\draw[thick, black] (0.2,-0.2) -- (0.2, 2);

\node[inner sep=0pt] at (.7,2.3) {\scalebox{1}{$\dots$}};

\node[above] at (1.2, 2) {$T_i$};
\draw[thick, black] (1.2, 2)--(1.2,1.3);
\draw[thick, black] (1.2, 1.1)--(1.2,-0.2);
\node[inner sep=0pt] at (1.7,2.3) {\scalebox{1}{$\dots$}};

\node[above] at (2.4, 2) {$T_{n-1}$};
\draw[thick, black] (2.4, 2)--(2.4,1.3);
\draw[thick, black] (2.4, 1.1)--(2.4,.5);
\draw[thick, black](2.4,.3)--(2.4,-0.2);

\node[above] at (3.2, 2) {$T_{n}$};
\draw[thick, black] (3.2, 2)--(3.2,1.6) arc (0:-90:.4)--(1.1,1.2) arc(90:270:.4);
\draw[thick, black] (1.3,.4)--(2.8,.4) arc(90:0:.4)--(3.2,-.2);

\node[above] at (1, -0.4) {$\,$};

\end{tikzpicture}
}
\caption{}
\label{fig: stacking and closure}
\end{figure} 

 Since any link is the closure of some string link, the maps $\SL_n\to \L_n$ and  $\mathcal{H}(n) \to \mathcal{LH}_n$ sending a string link $T$ to its closure $\widehat{T}$ are surjective. See Figure~\ref{fig: closure}.  The disk $D$ also appearing in Figure~\ref{fig: closure} is called a \emph{$d$-base} for $L$.   It is clear that $n_h(\widehat{T})\le n_h(T)$; indeed, a sequence of crossing changes reducing $T$ to a homotopy trivial string link immediately gives rise to a sequence of crossing changes reducing $\widehat{T}$ to the trivial link.  More surprisingly the reverse inequality holds, so that nothing is lost by studying the homotopy trivializing number over string links instead of links.

 \begin{proposition}
For any $T\in \H(n)$, $n_h(T)=n_h(\widehat{T})$ and $n_{\Delta}(T) = n_{\Delta}(\widehat{T})$.
 \end{proposition}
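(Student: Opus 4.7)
The inequality $n_h(\widehat T) \le n_h(T)$ (and similarly for $n_\Delta$) should be immediate: any disjoint collection of $C_1$-claspers (resp.\ $C_2$-claspers) realizing crossing changes (resp.\ $\Delta$-moves) on $T$ inside $D^2 \times I \subset S^3$ simultaneously realizes the same clasper surgeries on $\widehat T$, and the closure of a homotopy trivial string link is a homotopy trivial link.

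For the reverse inequality $n_h(T) \le n_h(\widehat T)$, set $k = n_h(\widehat T)$ and represent an optimal sequence of crossing changes by $k$ disjoint $C_1$-claspers $\tau_1, \dots, \tau_k \subset S^3$ meeting $\widehat T$ only at their leaves.  The plan is to isotope these claspers off of some $d$-base, so that they may be reinterpreted as claspers on a string link representative.  Because $\tau := \bigcup_i \tau_i$ is a compact 2-complex meeting $\widehat T$ in only finitely many interior leaf points, a standard general-position argument should produce a $d$-base $D'$ for $\widehat T$ that is disjoint from $\tau$: choose one base point on each component of $\widehat T$ away from $\tau$, connect them by arcs chosen generically in $S^3 \setminus (\widehat T \cup \tau)$, and thicken the resulting tree to a regular neighborhood.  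Letting $T'$ denote the string link representative of $\widehat T$ relative to $D'$, the claspers $\tau_i$ lie in $S^3 \setminus \operatorname{int}(D')$ and hence are $C_1$-claspers for $T'$; performing the $k$ surgeries transforms $T'$ into a string link $T'_0$ whose closure $\widehat{T'_0}$ is homotopy trivial.

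The next step is to show that $T'_0$ is itself homotopy trivial in $\H(n)$.  By Habegger--Lin this reduces to showing every integer-valued Milnor invariant $\mu_I(T'_0)$ vanishes, which I would prove by induction on $|I|$.  For $|I| = 2$, $\mu_{ij}(T'_0)$ equals the pairwise linking number of $\widehat{T'_0}$, which vanishes.  Assuming $\mu_J(T'_0) = 0$ for all $|J| < |I|$, the indeterminacy ideal for the link invariant $\overline{\mu}_I(\widehat{T'_0})$ collapses to zero, so $\mu_I(T'_0) = \overline{\mu}_I(\widehat{T'_0}) = 0$ because $\widehat{T'_0}$ is homotopy trivial.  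Hence $T'_0$ is trivial in $\H(n)$, giving $n_h(T') \le k$.

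Finally, I would compare $T$ and $T'$.  Since both are string link representatives of the same link $\widehat T \in \LH_n$, the Habegger--Lin classification recalled in Section~\ref{sect: link homotopy classification} identifies them as conjugates in $\H(n)$, and $n_h$ is conjugation-invariant (a sequence of $k$ crossing-change claspers trivializing $T'$ conjugates to one of the same length trivializing $T$, as in the proof of Theorem~\ref{thm: homotopy invariance}).  Therefore $n_h(T) = n_h(T') \le k = n_h(\widehat T)$, and the identical argument with $C_2$-claspers in place of $C_1$-claspers proves $n_\Delta(T) = n_\Delta(\widehat T)$.  The hardest step is expected to be this last one: carefully verifying that $n_h$ and $n_\Delta$ descend to invariants of the closure class in $\LH_n$, which depends on the structure of $\H(n)$ developed in the next section.
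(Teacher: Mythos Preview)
Your approach is close to the paper's but introduces an unnecessary detour that leaves a genuine gap at the end.  The paper keeps the \emph{original} $d$-base $D$ associated to $T$ and simply isotopes the clasper collection $\tau$ off of $D$ (first slide the leaves off $D$, then push the bands across as in Figure~\ref{fig: Crossing change disk avoids dbase}).  Once $\tau\cap D=\emptyset$, the claspers sit in $S^3\setminus\nu(D)\cong D^2\times[0,1]$ and are already claspers for $T$ itself; surgery produces a string link whose closure is homotopy trivial, and the paper then cites \cite[Corollary 2.7]{HL1} (the result you reprove inductively via Milnor invariants) to conclude.

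By instead constructing a \emph{new} $d$-base $D'$, you obtain a potentially different string link $T'$ and are forced to compare $n_h(T)$ with $n_h(T')$.  Your claim that Habegger--Lin identifies $T$ and $T'$ as conjugates in $\H(n)$ is not correct: two string links with link-homotopic closures lie in the same orbit of the Habegger--Lin ``structure group'' action (generated by certain partial conjugations), which is strictly larger than inner conjugation in $\H(n)$ for $n\ge 3$.  Verifying that $n_h$ is invariant under those extra moves is not addressed in Section~\ref{sect: link homotopy classification}, and doing so directly would amount to proving the proposition you are after.  The cleanest repair is exactly what the paper does: avoid introducing $T'$ altogether by isotoping $\tau$ off the original $D$.
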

 \begin{proof}
    Let $T$ be a string link, $L=\widehat T$, and $D$ be the associated $d$-base.  If $n_h(L)=k$ then there exists a collection of $k$ disjoint $C_1$-trees, $\tau$, for $L$ so that surgery along $\tau$ transforms $L$ to a homotopy trivial link $L'$.
    
    First isotope $\tau$ so that its every leaf is disjoint from $D$.   As in Figure~\ref{fig: Crossing change disk avoids dbase} we may now perform a further isotopy to arrange that all of $\tau$ is disjoint from $D$.  As a consequence we can view $\tau$ as collection of $C_1$-trees for $T$ in $S^3\setminus \nu(D)\cong D^2\times[0,1]$, where $\nu(D)$ is a regular neighborhood of $D$.  After changing $T$ by surgery along $\tau$ one arrives at a new string link $T'$ which satisfies that $\widehat{T'}=L'$ is homotopy trivial.  According to \cite[Corollary 2.7]{HL1}, then $T'$ is homotopy trivial.  As a consequence $n_h(T)\le k= n_h(L)$. 
\begin{figure}
     \centering
     \hfill
     \begin{subfigure}[b]{0.4\textwidth}
         \centering
         \begin{tikzpicture}
         \node at (0,0){\includegraphics[width=.8\textwidth]{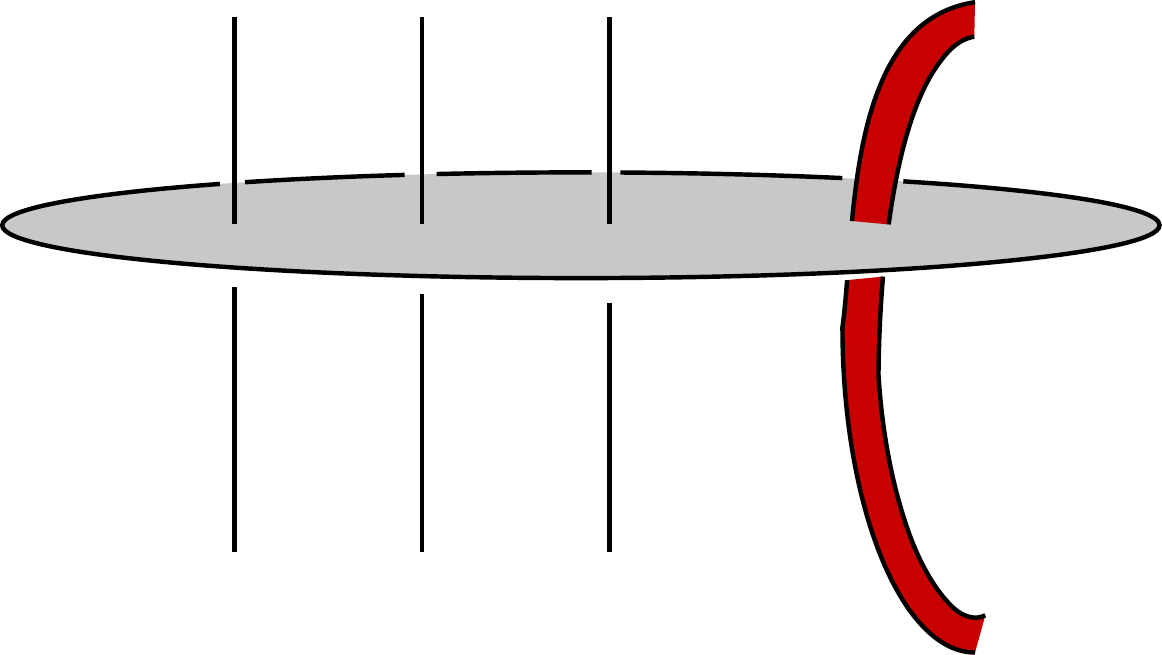}};
         \end{tikzpicture}
         \caption{A $C_k$-tree for $\widehat T$ intersecting a $d$-base in an arc.}
         \label{fig:dbaseIntersectsCrossingDisk}
     \end{subfigure}
     \hfill
     \begin{subfigure}[b]{0.4\textwidth}
         \centering
         \begin{tikzpicture}
         \node at (0,0){\includegraphics[width=.8\textwidth]{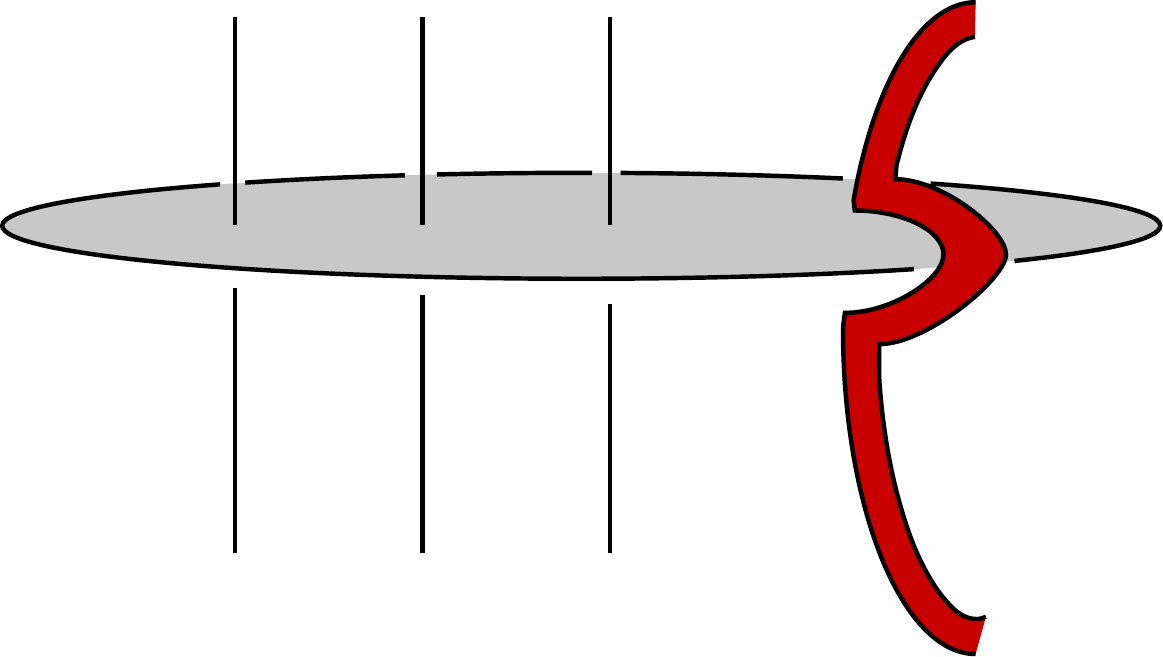}};
         \end{tikzpicture}
         \caption{After an isotopy we remove this intersection.}
         \label{fig:dbaseMissesCrossingDisk}
     \end{subfigure}
     \hfill
     
        \caption{}
        \label{fig: Crossing change disk avoids dbase}
\end{figure}
 \end{proof}

   The advantage of working with string links rather than links up to link homotopy is that string links form a group under the stacking operation of Figure~\ref{fig: stacking}.  The inverse operation $\overline{T}$ is given by first reflecting $T$ over $D^2\times\{1/2\}$ and then reversing the orientations.    A key step in Habegger-Lin's classification of links up to link homotopy \cite{HL1} is the following split short exact sequence.
\begin{equation}\label{exact sequence}
\begin{tikzcd}
    0 \arrow{r} & RF(n-1)\arrow{r}{\phi}  & \mathcal{H}(n)\arrow{r}{p}  & \arrow[dashed, bend left=33]{l}{s}\mathcal{H}(n-1)\arrow{r} & 0.
\end{tikzcd}
\end{equation}

Recall that $RF(n-1)$ is the \emph{reduced free group}, that is it is the quotient of the free group $F(n-1) = F(x_1,\dots, x_{n-1})$ given by killing the commutator of each $x_i$ with any conjugate of itself. Thus, in $RF(n-1)$, $x_i$ commutes with $\gamma x_i \gamma^{-1}$ for each $i$ and any $\gamma\in RF(n-1)$. The map $\phi$ is given by sending  the generator $x_i$ to the string link $x_{i,n}$ of Figure~\ref{fig: psi(x_i)}.  When it will not result in confusion, we drop the comma and write $x_{in}$.  The map $p:\H(n)\to \H(n-1)$ is given by deleting the $n$'th component of a string link, and the splitting $s:\H(n-1)\to \H(n)$ is given by introducing a new unknotted component unlinked from the rest.  

Recall that for any group $G$ and any $g,h\in G$ the commutator of $g$ with $h$ is defined by $[g,h] = {g}^{-1}{h}^{-1} gh$, (so that $gh = hg[g,h]$).   The following results will turn out to be central to the proof of Theorem~\ref{thm: main}.

    \begin{proposition}\label{prop: basics of nhl}
    Let $n\in \N$, $i\neq j\in \{1,\dots, n\}$, $r\in RF(n-1)$, and $T,S\in \H(n)$. Then:
\begin{enumerate}
 \item\label{item:nh(x_i)} 
 $n_h(x_{ij})=1$ and  $n_{\Delta}(x_{ij})=\infty$.
\item \label{item:nh(product)} 
 $n_h(T*S) \le n_h(T)+n_h(S)$, and $n_\Delta(T*S) \le n_\Delta(T)+n_\Delta(S)$.
 \item \label{item:nh(commutator)}$n_h([T,S])\le 2\cdot \min(n_h(T), n_h(S))$ and $n_\Delta([T,S])\le 2\cdot \min(n_\Delta(T), n_\Delta(S))$.
 \item \label{item:nh([x,w])} 
 $n_h([T,x_{ij}])\le 2$.
 \item \label{item:ndelta(commutator)}
 $n_{\Delta}(\phi([rx_i r^{-1}, x_j]))\le 1$.
 \end{enumerate}
\end{proposition}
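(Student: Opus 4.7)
My plan is to dispatch the five items in order; items (1)--(4) are elementary manipulations with claspers and stacking, while (5) is the only claim that requires a nontrivial geometric construction. For (1), the string link $x_{ij}$ has exactly one crossing of strand $i$ over strand $j$, so a single crossing change (one $C_1$-clasper surgery) unknots it, giving $n_h(x_{ij}) \le 1$; strict inequality holds since the nonzero linking number $\lk(x_{ij,i}, x_{ij,j}) = 1$ obstructs homotopy triviality. Because $\Delta$-moves preserve pairwise linking numbers, no finite $\Delta$-sequence can render $x_{ij}$ homotopy trivial, whence $n_\Delta(x_{ij}) = \infty$. For (2), choose optimal clasper systems for $T$ and for $S$, embed them in disjoint horizontal slabs of $D^2 \times [0,1]$, and stack the two; the combined system has $n_h(T) + n_h(S)$ disjoint claspers whose surgery produces the stack of two homotopy trivial string links. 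The identical stacking argument handles the $C_2$-clasper (and thus $n_\Delta$) case.

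For (3), fix disjoint $C_1$-claspers $\tau_1, \dots, \tau_k$ realizing $n_h(T) = k$, and let $\overline\tau_1, \dots, \overline\tau_k$ be their reflections (a corresponding clasper system for $T^{-1}$). Inside $[T,S] = T^{-1} S^{-1} T S$, embed each $\tau_i$ in the $T$-slab and each $\overline\tau_i$ in the $T^{-1}$-slab; surgery along these $2k$ disjoint claspers transforms $[T,S]$ into $(T')^{-1} S^{-1} T' S$, where $T'$ is homotopy trivial. Since $T' = 1$ in $\mathcal{H}(n)$, the surgered string link equals $S^{-1} S = 1$ there, i.e., it is homotopy trivial. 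Swapping the roles of $T$ and $S$ yields the claimed minimum, and the identical argument with $C_2$-claspers gives the $\Delta$-analogue. Item (4) is then immediate by combining (1) and (3): $n_h([T, x_{ij}]) \le 2 n_h(x_{ij}) = 2$.

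The substantive step is (5). I will construct a single $C_2$-clasper $\sigma$ attached to the trivial $n$-component string link whose surgery produces $\phi([r x_i r^{-1}, x_j])$; since $C_2$-clasper surgery is the same as a single $\Delta$-move (Figures~\ref{fig: Delta by Clasper surgery},~\ref{fig: Clasper surgery by Delta}), performing the inverse $\Delta$-move will reduce $\phi([rx_ir^{-1},x_j])$ to the trivial string link, so $n_\Delta(\phi([rx_ir^{-1},x_j])) \le 1$. The clasper $\sigma$ is a simple tree with one internal vertex and three leaves that clasp strands $i$, $j$, and $n$ once each; the band from the internal vertex to the leaf on strand $i$ is routed through $D^2 \times [0,1]$ along a path representing the braid word $\phi(r)$, so that the conjugation in the commutator is encoded by the embedding of the clasper rather than by additional claspers. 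The main obstacle is checking the identification of this specific surgery with $\phi([rx_ir^{-1},x_j])$: this is a computation of longitudes modulo link homotopy that follows from the compatibility of Habiro's clasper calculus \cite[Section~7]{Hab1} with the geometric description of $\phi$ given in Figure~\ref{fig: psi(x_i)}.
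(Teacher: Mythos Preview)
Your proposal is correct and follows essentially the same approach as the paper. The only cosmetic differences are that for item~(3) the paper trivializes the $T$ factor first (turning $[T,S]$ into something homotopic to $T^{-1}$) and then trivializes $T^{-1}$, whereas you do both simultaneously; and for item~(5) the paper verifies the identification of the surgered string link with $\phi([rx_ir^{-1},x_j])$ by explicitly computing the homotopy class of the $n$th strand via the Wirtinger presentation, rather than appealing to clasper calculus in the abstract.
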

\begin{proof}

To see the first conclusion, observe that $x_{ij}$ is transformed to the trivial string link by changing a single crossing.  Thus, $n_h(x_{ij})\le1$.  Since linking number is a link homotopy obstruction, and $x_{ij}$ is not homotopy trivial, it follows that $n_h(x_{ij})=1$.  The $\Delta$-move preserves linking number, so $x_{ij}$ cannot be unlinked by $\Delta$-moves. Thus, $n_\Delta(x_{ij})=\infty$.

Next, suppose $n_h(T)=k$ and $n_h(S)=\ell$.  Then $T*S$ can be transformed to $I*S=S$ by $k$ crossing changes.  Here $I$ is (link homotopic to) the $n$-component trivial string link.  An additional $\ell$ crossing changes transforms this to the trivial element of $\H(n)$.  The same argument holds for $n_\Delta$.

To see the third result, notice that by changing $k$ crossings, $[T,S] = {T}^{-1}{S}^{-1}TS$ is transformed to ${T}^{-1}{S}^{-1}S = T^{-1}$. 
 Another $k$ crossing changes transforms it to a homotopy trivial string link.  Thus, $n_h([T,S])\le 2n_h(T)$.  By a similar analysis, $n_h([T,S])\le 2n_h(S)$.  The same argument holds for $n_\Delta$.
 
The fourth result is an immediate corollary of the first and third.  

Finally, let $r\in RF(n-1)$, and $S=\phi([rx_i r^{-1}, x_j])$.  In Figure~\ref{fig: 3-clasper for Delta} we see a  $C_2$-tree $c$ on the trivial string link.  In Figure~\ref{fig: 3-clasper surgery for delta} we see the result of clasper surgery, call it $T$.   As the leftmost $n-1$ components of each of $T$ and $S$ are unlinked, $S,T\in \H(n)$ depend only on the class of their $n$'th component in the fundamental group of the complement of the first $n-1$ components, which is the free group on the meridians $m_1,\dots, m_{n-1}$. Using the Wirtinger presentation we write the homotopy classes of  $T_n$ and $S_n$ as words in these meridians.  In each case we get that $[T_n]=[S_n] = [m_i,\psi(r)m_j\psi(r)^{-1}]$. Here $\psi$ is the map given by replacing each $x_k$ by the corresponding meridian $m_k$.   Claim~(\ref{item:ndelta(commutator)}) follows.

 \begin{figure}[h]
\subcaptionbox{
A $C_2$-tree on the trivial string link $T$.
\label{fig: 3-clasper for Delta}}[0.4\linewidth]{
\begin{tikzpicture}
\node at (0,0) {
\includegraphics[height=.2\textheight]{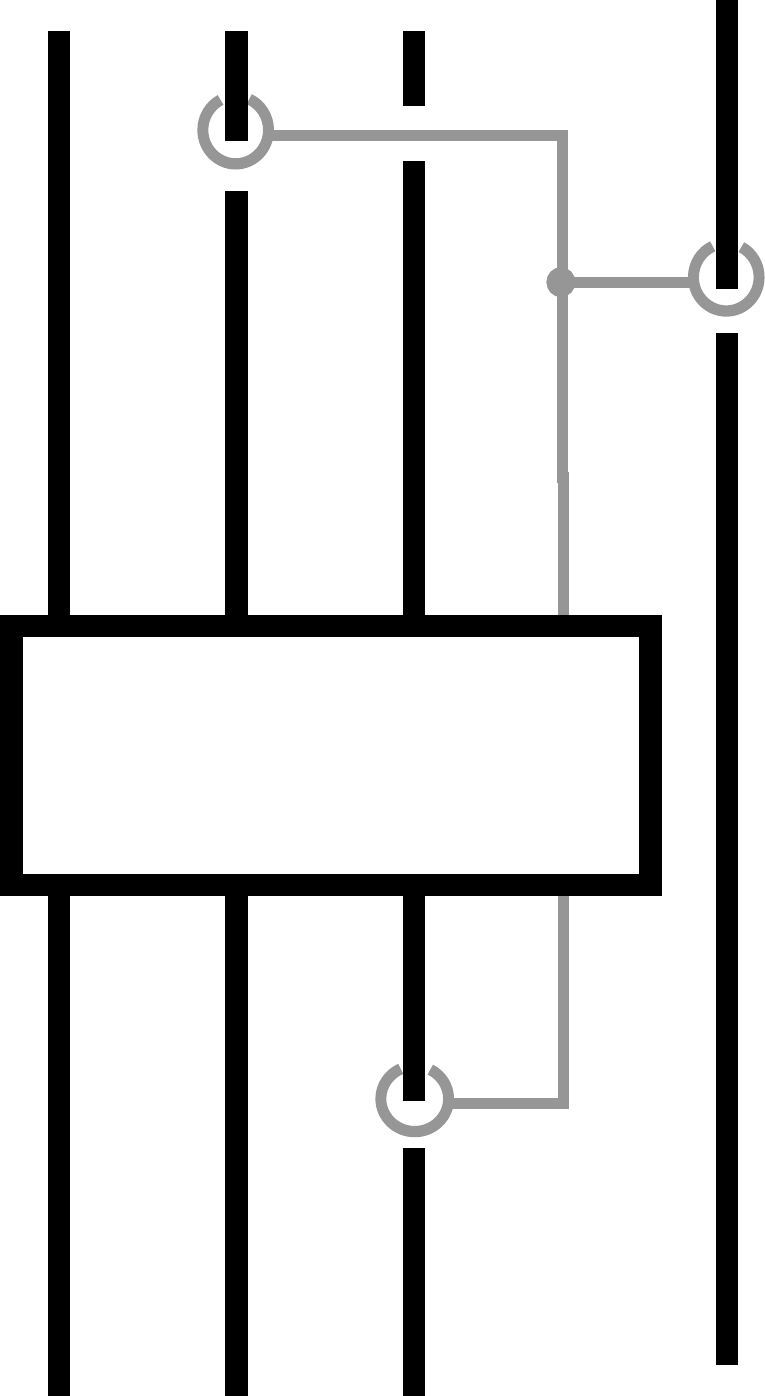}};
\node at (0.3,2.5) {$T_j$};
\node at (-0.4,2.5) {$T_i$};
\node at (1.3,2.5) {$T_m$};
\node at (-.2,-.2) {$\phi(r)$};
\end{tikzpicture}
}
\hspace{.1\textwidth}
\subcaptionbox{Performing clasper surgery.
\label{fig: 3-clasper surgery for delta} 
}[0.35\linewidth]{
\begin{tikzpicture}
\node at (0,0) {\includegraphics[height=.2\textheight]{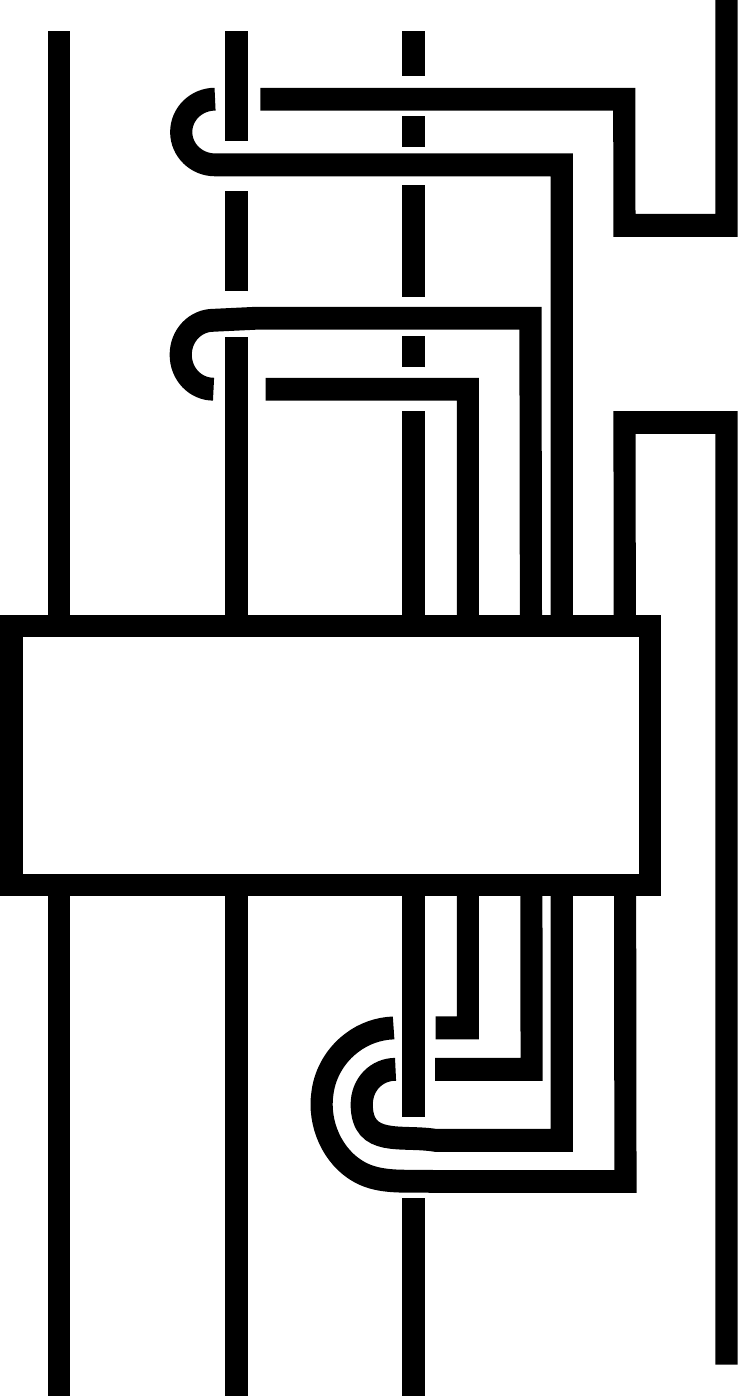}};
\node at (-.2,.-.22) {$\phi(r)$};
\node at (0.3,2.5) {$T_j$};
\node at (-0.4,2.5) {$T_i$};
\node at (1.3,2.5) {$T_m$};
\end{tikzpicture}

}

\caption{
}

\label{fig: unknotting phi([rxrbar,y])}
\end{figure} 
\end{proof}

\section{Bounding the homotopy trivializing number}\label{sect:bounding homotopy trivialing number}

In this section, we  prove Theorem~\ref{upper bound theorem main} which we use in the introduction to conclude that $C_n\le (n-1)(n-2)$.  The bulk of our work will be in proving the following theorem which allows us to realize elements of $RF(m)$ as a product of a minimal number of powers of the preferred generators along with a short list of commutators.

\begin{theorem}\label{hackey idea}


For any $x\in RF(m)$, there are some $\alpha_1,\dots,\alpha_m\in \Z$ and $\omega_1,\dots \omega_{m-1}$ so that 
$$
x = \prod_{k=0}^{m-1} x_{m-k}^{\alpha_{m-k}} \prod_{k=1}^{m-1}z_k.
$$
Here, for each $k$,  $z_k$ can be chosen to be either $[\omega_k,x_k]$ or $[x_k,\omega_k]$.

\end{theorem}

Before proving Theorem~\ref{hackey idea}, we will use it to prove  Theorem~\ref{upper bound theorem main}.  We start with the proof in the special case of a string link in the image of $\phi:RF(n-1)\to \H(n)$.   

\begin{corollary}\label{nhL for RF}
Suppose that $T = T_1 \cup \dots \cup T_n \in \H(n)$ is in the image of $\phi:RF(n-1)\to \H(n)$.  Let $Q(T)=\#\{1\leq k<n-1\mid \lk(T_n,T_k)=0\}$. 
 Then $n_h(T)\le \Lambda(T)+2Q(T)$.  
\end{corollary}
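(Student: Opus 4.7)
The plan is to apply Theorem~\ref{hackey idea} to $r \in RF(n-1)$ where $T = \phi(r)$. Taking $m = n-1$, we obtain the normal form
$$r = \prod_{k=0}^{n-2} x_{n-1-k}^{\alpha_{n-1-k}} \cdot \prod_{k=1}^{n-2} [\omega_k, x_k]$$
for integers $\alpha_1,\dots,\alpha_{n-1}$ and $\omega_1,\dots,\omega_{n-2} \in RF(n-1)$. Applying $\phi$ converts the generator $x_i$ to the string link $x_{in}$ of Figure~\ref{fig: psi(x_i)}, and inspecting abelianizations yields $\lk(T_n,T_i) = \alpha_i$ for $1 \leq i \leq n-1$. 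Hence $\Lambda(T) = \sum_{i=1}^{n-1}|\alpha_i|$ and $Q(T) = \#\{k \in \{1,\dots,n-2\} : \alpha_k = 0\}$.

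Subadditivity of $n_h$ under stacking (Proposition~\ref{prop: basics of nhl}(2)), combined with parts (1) and (4) of the same proposition (which give $n_h(x_{in}^{\alpha_i}) \leq |\alpha_i|$ and $n_h(\phi([\omega_k,x_k])) \leq 2$ respectively), immediately yields the \emph{naive} estimate
$$n_h(T) \leq \sum_{i=1}^{n-1}|\alpha_i| + 2(n-2) = \Lambda(T) + 2(n-2).$$
This overshoots the target by $2(n-2 - Q(T))$, so the remaining task is to save two crossing changes for each $k \in \{1,\dots,n-2\}$ with $\alpha_k \neq 0$.

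To achieve this, I would group the factor $x_k^{\alpha_k}$ from the power part of $r$ with the commutator $[\omega_k,x_k]$ whenever $\alpha_k \neq 0$. The reduced free group relation $[x_k,\gamma x_k\gamma^{-1}] = 1$ implies that $[\omega_k,x_k]$ is a product of conjugates of $x_k^{\pm 1}$ and hence commutes with $x_k^{\alpha_k}$; a short calculation then gives the rewriting
$$x_k^{\alpha_k} \cdot [\omega_k,x_k] = (\omega_k^{-1} x_k^{-1}\omega_k) \cdot x_k^{\alpha_k+1}.$$
The main obstacle, and the step I expect to do the real work, is showing that this combined piece is trivializable using only $|\alpha_k|$ crossing changes in $\H(n)$. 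The heuristic is that the crossing changes used to cancel the linking $\lk(T_n,T_k) = \alpha_k$ can be arranged so as to simultaneously resolve the commutator, exploiting conjugation invariance of $n_h$ (an immediate consequence of Theorem~\ref{thm: homotopy invariance}) together with a careful geometric placement of the crossing-change disks along the twin $x_k$--factors produced by the rewriting. Granting this local claim, summing over all $k$ leaves only the commutators $[\omega_k,x_k]$ with $\alpha_k = 0$ (of which there are $Q(T)$) costing $2$ apiece, which combines with the power contribution $\sum_i|\alpha_i| = \Lambda(T)$ to give the desired bound $n_h(T) \leq \Lambda(T) + 2Q(T)$.
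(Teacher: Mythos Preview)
Your overall plan coincides with the paper's: apply Theorem~\ref{hackey idea}, peel off the innermost block $x_k^{\alpha_k}[\omega_k,x_k]$, and when $\alpha_k\neq0$ try to kill that block with only $|\alpha_k|$ crossing changes instead of $|\alpha_k|+2$. The paper executes exactly this; it dispatches what you call the ``main obstacle'' by asserting $[\omega_k,x_k]=[x_k,\omega_k^{-1}]$, hence $x_k^{\alpha_k}[\omega_k,x_k]=x_k^{\alpha_k-1}\,\omega_k x_k\omega_k^{-1}$, a product of $|\alpha_k|$ conjugates of $x_k$.

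The trouble is that the local claim you are aiming at---that $\phi(x_k^{\alpha_k}[\omega_k,x_k])$ can always be trivialized in $|\alpha_k|$ crossing changes---is false, so your heuristic cannot be completed as stated. Take $n=4$, $k=1$, $\alpha_1=1$, $\omega_1=x_2x_3\in RF(3)$. Using Table~\ref{commutator table} one finds
\[
\phi\bigl(x_1[\omega_1,x_1]\bigr)=x_{14}\,x_{124}\,x_{134}\,x_{1234}^{-1},
\qquad
\phi\bigl(\omega_1 x_1\omega_1^{-1}\bigr)=x_{14}\,x_{124}\,x_{134}\,x_{1324},
\]
so the paper's rewriting identity already fails here. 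More to the point, by Lemma~\ref{cor: nhL=1} every conjugate of $x_{14}$ in $\H(4)$ has normal form $x_{14}x_{124}^a x_{134}^b x_{1234}^c x_{1324}^d$ with $c+d=ab$; for $x_{14}x_{124}x_{134}x_{1234}^{-1}$ one gets $c+d=-1\neq1=ab$, so this element is not a conjugate of any $x_{ij}^{\pm1}$ and hence $n_h\bigl(\phi(x_1[\omega_1,x_1])\bigr)\ge 3>1=|\alpha_1|$. The two crossing changes you hope to save therefore cannot in general be saved on the isolated block $x_k^{\alpha_k}[\omega_k,x_k]$; any valid argument must exploit the interaction with the remaining factors in the product. (For $n=4$ the corollary itself does hold---one can verify it directly from the classification in Section~\ref{sect: 4 component}---so what is missing is a genuinely global mechanism, not merely a calculation.)
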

\begin{proof}

Let $T=\phi(t)$ with $t\in RF(n-1)$.  Recall that $\phi:RF(n-1)\to \H(n)$ is given by $\phi(x_i)=x_{i,n}$.  We apply Theorem~\ref{hackey idea} to $t$ with $z_k = [\omega_k,x_k]$ when $\alpha_k\leq 0$ and $z_k=[x_k,\omega_k]$ when $\alpha_k> 0$.  We then consider $T=\phi(t)$ and emphasize the terms of each product involving $x_{1,n}$,
$$
T=
\prod_{k=0}^{n-2} x_{n-1-k, n}^{\alpha_{n-1-k}} \prod_{k=1}^{n-2}z_k
=
\left(\prod_{k=0}^{n-3} x_{n-1-k,n}^{\alpha_{n-1-k}}\right)\cdot x_{1,n}^{\alpha_{1}}\cdot z_1 \left(\prod_{k=2}^{n-2}z_k\right).
$$
For notational ease, we have conflated $z_k$ with $\phi(z_k)$ and $\omega_k$ with $\phi(\omega_k)$.  If $\alpha_{1}>0$ then we can undo the center-most terms $x_{1,n}z_1 = x_{1,n}^{\alpha_{1}}\cdot [x_{1,n},\omega_1]$ 
in $\alpha_{1}$ crossing changes.  Indeed,  
$$x_{1,n}^{\alpha_{1}}\cdot z_1 =x_{1,n}^{\alpha_{1}}\cdot[x_{1,n},\omega_1] = x_{1,n}^{(\alpha_{1}-1)}\omega_1^{-1} x_{1,n} \omega_1.$$
 After $\alpha_{1}$ crossing changes, this is transformed to $\omega_1^{-1}\omega_1=1$.  Similarly, if $\alpha_{1}<0$ then   $$x_{1,n}^{\alpha_1}\cdot z_1=x_{1,n}^{\alpha_{1}}\cdot[\omega_1, x_{1,n}] 
 =x_{1,n}^{\alpha_1}\omega_1^{-1}x_{1,n}^{-1}\omega_1x_{1,n}
=x_{1,n}^{(\alpha_{1}+1)}\omega_1^{-1} x_{1,n}^{-1} \omega_1$$
since $x_{1,n}$ commutes with $\omega_1^{-1} x_{1,n}^{-1} \omega_1$. Note this can be undone in $|\alpha_{n,1}|$ crossing changes. Finally, if $\alpha_{n,1}=0$, by Proposition \ref{prop: basics of nhl}(4), $z_i$ can be undone in 2 crossing changes.  
Thus, if we set $q_{1} = \begin{cases}0&\text{ if }\alpha_{1}\neq 0\\2&\text{ if }\alpha_{1}= 0\end{cases},$ then after $|\alpha_{1}|+q_{1}$ crossing changes, $T$ is transformed into 
$$
\prod_{k=0}^{n-3} x_{n, n-1-k}^{\alpha_{n-1-k}}\prod_{k=2}^{n-2}[\omega_k, x_{n,k}].
$$
A direct induction now reveals that 
$$n_h(T)\le \sum_{k=1}^{n-1} |\alpha_{k}|+q_{k}
$$
where $q_{k} = \begin{cases}0&\text{ if }|\alpha_{k}|\neq 0\text{ or }k=n-1\\2&\text{otherwise}\end{cases}$.  Observing that $\alpha_{k}=\lk(T_n, T_k)$ and that $\Sum_{k=1}^{n-1} q_{k}= 2 \cdot Q(L)$ completes the proof.
\end{proof}


Now suppose that $L=L_1\cup\dots\cup L_n$ is a Brunnian link.  It follows then that $L_1\cup\dots\cup L_{n-1}$ is the unlink, and if we realize $L$ as $\widehat{T}$ for some $T\in \H(n)$ then we may take $T_1\cup\dots\cup T_{n-1}$ to be the trivial string link and thus $T$ is in the image of $\phi:RF(n-1)\to \H(n)$.  If $L$ is Brunnian and has at least 3 components, then all of the pairwise linking numbers vanish, so $\Lambda(T)=0$ and $Q(T)=n-2$. 
 The corollary below follows.

 \nhlForBruunian


Induction and the decomposition $\H(n)\cong \H(n-1)\ltimes RF(n-1)$ now lets us control the homotopy trivializing number over all of $n$-component links.

\UpperBoundTheorem

\begin{proof}
We proceed inductively on the number of components. 
 Realize $L$ as $L=\widehat T$ for some $T\in \H(n)$. As a consequence of the split exact sequence of \pref{exact sequence},  $T=\phi(S)s(T')$ with $S\in RF(n-1)$ and $T'\in \H(n-1)$. By Corollary~\ref{nhL for RF}, $n_h(\phi(S))\le \Lambda(\phi(S))+2Q(\phi(S))$.  Appealing to induction, 
 $
 n_h(s(T'))=n_h(T')\le \Lambda(T')+2Q(T').
 $
   Putting this together,
 $$
 n_h(L)\le n_h(\phi(S))+n_h(T')\le \Lambda(\phi(S))+\Lambda(T')+2Q(\phi(S))+2Q(T') = \Lambda(L)+2Q(L).
 $$  This completes the proof.  
\end{proof}

\subsection{Representing elements of $RF(m)$ as products without too many commutators.}

In this subsection we prove Theorem~\ref{hackey idea}.  
We begin this with a recollection of basic properties of commutators and the lower central series.  A standard reference is the work of Magnus-Karrass-Solitar \cite[Chapter 5]{MKS76}.  We begin by describing elementary commutators and their weight. We then apply these facts to the reduced free group. 

\begin{definition}
Let $G$ be a group and $x_1,\dots, x_m$ be a generating set for $G$.  We call $x_1,x_1^{-1},\dots, x_m, x_m^{-1}$ \emph{weight 1 elementary commutators}.  If $c_1$ and $c_2$ are elementary commutators of {weight} $w_1$ and $w_2$ respectively, then $[c_1, c_2]$ is an elementary commutator of weight $w_1+w_2$.  
\end{definition}

If $c$ is an elementary commutator of weight $w$, then we write $\wt(c)=w$.  Note that as $[c_1,c_2]^{-1} = [c_2,c_1]$, the set of elementary commutators of weight $w$ is closed under the inverse operation.

\begin{definition}
If $H$ and $J$ are subgroups of $G$, then $[H,J]\le G$ is the subgroup generated by elements of the form $[h,j]$ with $h\in H$ and $j\in J$.
\end{definition}

\begin{definition}
The \emph{lower central series} of a group $G$ is defined recursively by $G_1=G$ and $G_{k+1}=[G_k, G]$.
\end{definition}

We give several well-known properties of commutators and their behavior modulo lower central series quotients.  Many of these are grouped together in \cite[Theorem 5.1]{MKS76} as the Witt-Hall identities. 

\begin{proposition}\label{prop: commutators}
Let $G$ be a group with generators $x_1,\dots, x_m$.  Let $a,b,c\in G$.  
\begin{enumerate}
\item \cite[Theorem 5.3 (8)]{MKS76} $[G_k, G_\ell] \subseteq G_{k+\ell}$.
\item\label{lower central normal} $G_k\unlhd G$ is a normal subgroup.
\item \label{LCS Abelian}
$G_k/G_{k+1}$ is an Abelian group generated by the set of all weight elementary $k$ commutators.
\item \label{item Commutator product 1} \cite[Theorem 5.1 (9), (10)]{MKS76} $[a, bc]=[a,c][a,b][[a,b],c]$ and $[bc,a] = [b,a][[b,a],c][c,a]$.
\item \cite[Theorem 5.3 (5), (6)]{MKS76}  \label{item Commutator product 2} If $a\in G_u$, $b\in G_v$ and $c\in G_w$ then in $G/G_{u+v+w}$, $[a, bc]=[a,b][a,c]$ and $[bc,a] = [b,a][c,a]$.
\item \label{commutator inverse -1}\cite[Theorem 5.1 (8)]{MKS76} $[a,b]^{-1} = [b,a]$.
\item \label{commutator inverse} $[a, b^{-1}]=[a,b]^{-1}[b, [a,b^{-1}]]$ and $[a^{-1},b]=[a,b]^{-1}[a, [a^{-1},b]]$.
\item \label{commutator inverse 2} If $a\in G_u$ and $b\in G_v$ then in $G/G_{u+2v}$, $[a,b^{-1}] = [a,b]^{-1}$.  In $G/G_{2u+v}$, $[a^{-1},b]=[a,b]^{-1}$.
\item \label{commutation well defined LCS} If $a=b$ in $G/G_u$ and $c\in G_v$ then $[a,c]=[b,c]$ in $G/G_{u+v}$.
\end{enumerate}
\end{proposition}
\begin{proof}
We prove only those results which do not explicitly appear in \cite{MKS76}.  If $A$ and $B$ are normal in $G$, then $[A,B]$ is also normal (see for example \cite[Lemma 5.1]{MKS76}).  Together with induction, \pref{lower central normal} follows.  From \pref{LCS Abelian} follows from \cite[Theorem 5.4]{MKS76} since the simple $k$-fold commutators defined in \cite[Section 5.3]{MKS76} are all elementary weight $k$ commutators.  
 
If $a,b \in G$ then by \pref{item Commutator product 1} 
$$
\begin{array}{l}
1=[a,b^{-1}\cdot b] =[a,b][a,b^{-1}][[a,b^{-1}],b],
\\
1=[a^{-1}\cdot a, b] = [a^{-1},b][[a^{-1}, b],a][a,b].\end{array}
$$claim \pref{commutator inverse} follows.  If $a\in G_u$ and $b\in G_v$, then  $[b, [a,b^{-1}]]\in [[G_u, G_v],G_v]\subseteq G_{u+2v}$, proving \pref{commutator inverse 2}.  Finally, if $a=b$ in $G/G_u$, then $b=a q$ with $q\in G_u$, and $$[b,c] = [aq,c]=[a,c][[a,c],q][q,c].$$
If $c\in G_v$ then $[[a,c],q]$ and $[q,c]$ are each in $G_{u+v}$, proving \pref{commutation well defined LCS}.  
\end{proof}

Recall that the reduced free group $RF(m)$ on letters $x_1,\dots, x_m$ is the quotient of the free group on $x_1,\dots, x_m$ given by requiring each conjugate of $x_i$ to commute with each other conjugate of $x_i$ for all $i$.  This results in some commutativity relations among commutators.  First, we explain recursively the fairly intuitive notion of what it means for a generator to be ``\emph{in}" an elementary commutator.  

\begin{definition}
Let $x_1,\dots, x_m$ be generators of a group $G$.  We say that $x_i$ is \emph{in} $x_j$ (and $x_i$ is in $x_j^{-1}$) with multiplicity 1 if $i=j$ and otherwise $x_i$ is in $x_j$ (and $x_j^{-1}$) with multiplicity $0$.  If $a$ and $b$ are elementary commutators such that $x_i$ is in $a$ with multiplicity $p$ and $x_i$ is in $b$ with multiplicity $q$, then $x_i$ is in $[a,b]$ with multiplicity $p+q$.  Whenever $x_i$ is in $a$ with multiplicity greater than 0, we will simply say that $x_i$ is in $a$.  
\end{definition}

The reader should compare this to the notion of simple $k$-fold commutators from \cite[Section 5.3]{MKS76}.  

\begin{proposition}\label{prop: RF commute}
If $a$ and $b$ are elementary commutators in $RF(m)$ and $x_i$ is in each of $a$ and $b$, then for any $\gamma,\delta\in RF(m)$ and any $k,\ell\in \Z$, $[\gamma a^k\gamma^{-1},\delta b^\ell \delta^{-1}]=1$ in $RF(m)$.   
\end{proposition}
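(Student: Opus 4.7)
The plan is to identify an abelian normal subgroup of $RF(m)$ containing both $a$ and $b$. Let $N_i$ denote the normal closure of $x_i$ in $RF(m)$, i.e.\ the subgroup generated by all conjugates of $x_i$. The defining relations of $RF(m)$ say exactly that any two conjugates of $x_i$ commute, so the generating set of $N_i$ is pairwise commuting, and hence $N_i$ is abelian.

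The key lemma I would prove is: every commutator $c$ in which $x_i$ appears (with multiplicity at least one) lies in $N_i$. I would argue this by induction on the weight of $c$. The base case is vacuous, since the only weight-one commutator containing $x_i$ is $x_i$ itself. For the inductive step, write $c=[c_1,c_2]$ with $c_1,c_2$ commutators of strictly smaller weight. The definition of multiplicity is additive under bracketing, so $x_i$ must appear in at least one of $c_1$ or $c_2$; without loss of generality $x_i$ is in $c_1$. The inductive hypothesis puts $c_1\in N_i$, and then
\[c=c_1^{-1}c_2^{-1}c_1c_2 = c_1^{-1}\cdot (c_2^{-1}c_1c_2)\]
expresses $c$ as a product of two elements of $N_i$ (the second by normality of $N_i$), so $c\in N_i$.

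Granting the lemma, the proposition is immediate. The hypothesis that $x_i$ is in $a$ and in $b$ puts $a,b\in N_i$; since $N_i$ is a subgroup, $a^k,b^\ell\in N_i$; and since $N_i$ is normal, $\gamma a^k\gamma^{-1}$ and $\delta b^\ell\delta^{-1}$ both lie in $N_i$ as well. Because $N_i$ is abelian, these two elements commute, and therefore $[\gamma a^k\gamma^{-1},\delta b^\ell\delta^{-1}]=1$.

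I do not anticipate a serious obstacle in this approach. The only subtle point is the inductive step, where one must know both that the multiplicity of $x_i$ behaves additively on brackets and that normality is enough to push the conjugate $c_2^{-1}c_1c_2$ back into $N_i$; both are formal. It is worth highlighting a consequence that falls out of Case 1 of the induction (when $x_i$ lies in both $c_1$ and $c_2$): the bracket $[c_1,c_2]$ is already trivial in $RF(m)$, since it sits in $[N_i,N_i]=\{1\}$. This is the same phenomenon driving the proposition and is worth recording as one walks through the proof.
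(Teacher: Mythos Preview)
Your proposal is correct and follows essentially the same argument as the paper: both show that the normal closure $N_i$ of $x_i$ is abelian, prove by induction on weight that any commutator containing $x_i$ lies in $N_i$ via the decomposition $[c_1,c_2]=c_1^{-1}(c_2^{-1}c_1c_2)$, and then conclude by normality. Your additional remark that $[c_1,c_2]=1$ whenever $x_i$ is in both factors is a pleasant side observation the paper does not make explicit here.
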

\begin{proof}
The definition of the reduced free group immediately implies that the normal subgroup generated by $x_i$ is Abelian.  We proceed by demonstrating by induction on $\wt(a)$ that if $x_i$ is in an elementary commutator $a$ then $a$ is in the normal subgroup generated by $x_i$.  When $\wt(a)=1$, $a=x_i$ or $a=x_i^{-1}$ and we are done.  

If $\wt(a)>1$ then $a=[u,v]$ and $x_i$ is in at least one of $u$ and $v$.  Without loss of generality, assume that it is in $u$, so that we may inductively assume that $u$ is in the normal subgroup generated by $x_i$.    Thus, $u^{-1}$ and $v^{-1} u v$ are each in the normal subgroup generated by $x_i$.  As a consequence, $a=[u,v]=u^{-1} (v^{-1} u v)$ is in the normal subgroup generated by $x_i$. 

Thus, each of $\gamma a^k \gamma^{-1}$ and $\delta b^\ell \delta^{-1}$ is in the normal subgroup generated by $x_i$, which is Abelian by the definition of $RF(m)$.  We conclude that $[\gamma a^k\gamma^{-1},\delta b^{\ell} \delta^{-1}]=1$ as we claimed.  
\end{proof}

\begin{proposition}\label{prop inverses in RF}
For any elementary commutators $a$ and $b$ in $RF(m)$ and $k\in \Z$, $[a,b]^{k}=[a^{k},b]=[a, b^{k}]$.
\end{proposition}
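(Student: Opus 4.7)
The plan is to reduce everything to the single fact that $[a,b]$ commutes with each of $a$ and $b$, and then turn the crank with the commutator product formula from Proposition~\ref{prop: commutators}(\ref{item Commutator product 1}). First I would observe that any nontrivial commutator $a$ in $RF(m)$ contains some generator $x_i$ (by an obvious induction on weight), and that this same $x_i$ is then ``in'' $[a,b]$ by the recursive definition of containment. Proposition~\ref{prop: RF commute} applied to $[a,b]$ and $a$ (both containing $x_i$, with trivial conjugators $\gamma = \delta = 1$) immediately gives $[[a,b],a]=1$. The same reasoning with some $x_j$ appearing in $b$ yields $[[a,b],b]=1$.

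Next I would handle positive $k$ by induction. Writing $a^{k+1} = a\cdot a^k$ and applying the identity $[bc,a] = [b,a][[b,a],c][c,a]$ from Proposition~\ref{prop: commutators}(\ref{item Commutator product 1}) with $b = a$, $c = a^k$, the middle factor $[[a,b], a^k]$ vanishes because $[a,b]$ commutes with $a$ (and hence any power of $a$). This collapses the expansion to $[a^{k+1},b] = [a,b]\cdot [a^k,b]$, and the inductive hypothesis finishes the step. The corresponding argument using the second half of that same formula, $[a,bc] = [a,c][a,b][[a,b],c]$, gives $[a, b^{k+1}] = [a,b^k][a,b]$ and hence $[a,b^k] = [a,b]^k$.

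For negative $k$ the base case is $[a^{-1},b] = [a,b]^{-1}$. This is where I would use Proposition~\ref{prop: commutators}(\ref{commutator inverse}), which gives $[a^{-1},b] = [a,b]^{-1}[a,[a^{-1},b]]$. The error term $[a,[a^{-1},b]]$ is a commutator between $a$ and another commutator in which the generator $x_i$ of $a$ still appears; so Proposition~\ref{prop: RF commute} kills it. With the base case in hand, the same inductive expansion as in the positive case (now splitting $a^{-(k+1)} = a^{-1}\cdot a^{-k}$) propagates the identity to all negative exponents, and similarly for $[a,b^{-k}]$.

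The main obstacle I anticipate is purely a bookkeeping one: making sure that the ``error terms'' produced by the nonabelian commutator expansions (namely $[[a,b],a^k]$, $[[a,b],b^k]$, and $[a,[a^{-1},b]]$) really do lie in a pair of nested normal subgroups to which Proposition~\ref{prop: RF commute} applies. Once one is careful that every auxiliary commutator still contains a generator shared with its partner, the proposition takes care of them and the induction is routine; no lower-central-series truncation is needed, which is why the identities are exact equalities in $RF(m)$ rather than only modulo some $G_k$.
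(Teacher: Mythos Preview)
Your proposal is correct and follows essentially the same route as the paper: expand $[a^k,b]$ via the commutator product identity of Proposition~\ref{prop: commutators}(\ref{item Commutator product 1}), kill the error term using Proposition~\ref{prop: RF commute} (both factors contain a common $x_i$), and handle $k=-1$ with Proposition~\ref{prop: commutators}(\ref{commutator inverse}). The only cosmetic difference is that the paper dispatches $[a,b^k]$ in one line via $[a,b^k]=[b^k,a]^{-1}$ rather than running a second induction.
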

\begin{proof}
For $k\ge 0$ we proceed by induction.  By Proposition~\ref{prop: commutators}, \pref{item Commutator product 1}
$$
[a^k,b] = [a^{k-1},b][[a^{k-1},b],a][a,b].
$$
Let $x_i$ be any of the preferred generators of $RF(n)$ which is in $a$.  Then $[a^{k-1},b]$ and $a$ each sit in the normal subgroup generated by $x_i$, which is Abelian.  Thus, $[[a^{k-1},b],a]=1$.  Appealing to an inductive assumption completes the argument when $k\ge 0$.  

It suffices now to verify the claim when $k=-1$.  By Proposition~\ref{prop: commutators}, \pref{commutator inverse}, $[a^{-1},b] = [a,b]^{-1}[a,[a^{-1},b]]$.  As $a$ and $[a^{-1},b]$ each sit in the normal subgroup generated by some $x_i$, $[a,[a^{-1},b]]=1$.  

Since $[a, b^k] = [b^k, a]^{-1}$ the final claimed identity follows.  
\end{proof}

\begin{proposition}
    \label{prop:rearrage}
    For any elementary commutators $a$, $b$ and $c$,
    \[[[a,b],c] = [[c,b],a][[a,c],b]\]
    in the reduced free group.
\end{proposition}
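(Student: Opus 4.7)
The target identity is a Jacobi-type identity for commutators specialized to the reduced free group. My plan is first to rewrite it in a symmetric form, then prove that form by a case analysis based on shared generators.

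Using the general antisymmetry $[x,y]^{-1}=[y,x]$ together with Proposition~\ref{prop inverses in RF} (which gives $[u^{-1},v] = [u,v]^{-1}$ for commutators $u,v$ in $RF(m)$), in $RF(m)$ we have
\[
[[c,b],a] = [[b,c]^{-1},a] = [[b,c],a]^{-1} \quad\text{and}\quad [[a,c],b] = [[c,a]^{-1},b] = [[c,a],b]^{-1}.
\]
So the target identity $[[a,b],c] = [[c,b],a][[a,c],b]$ is equivalent to the Jacobi identity
\[
[[a,b],c] \cdot [[c,a],b] \cdot [[b,c],a] = 1 \quad \text{in } RF(m).
\]

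To prove this, I would case-split on whether the sets of preferred generators appearing in $a$, $b$, and $c$ are pairwise disjoint. If two of them share a generator $x_i$, say $x_i$ appears in both $a$ and $b$, then by Proposition~\ref{prop: RF commute} we have $[a,b]=1$, so $[[a,b],c]=1$. Moreover $x_i$ appears in $[c,a]$ (inherited from $a$) and in $[b,c]$ (inherited from $b$), so Proposition~\ref{prop: RF commute} applied to the outer commutators $[[c,a],b]$ and $[[b,c],a]$ kills both as well. Hence all three factors are trivial and the identity holds. The remaining subcases are symmetric.

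If the generator sets are pairwise disjoint, the Jacobi identity can be derived by repeated application of the commutator product expansions in Proposition~\ref{prop: commutators}. This yields the identity modulo an error lying in the term $RF(m)_{\alpha+\beta+\gamma+1}$ of the lower central series, where $\alpha, \beta, \gamma$ are the weights of $a, b, c$. Each such error factor is a nested commutator in which one of $a, b, c$ appears with multiplicity at least $2$; because the generator sets are pairwise disjoint, this forces some $x_i$ to appear with multiplicity at least $2$ in the error commutator, which then vanishes by Proposition~\ref{prop: RF commute}. The main obstacle I anticipate is the bookkeeping in this pairwise-disjoint case: verifying that every error term from the expansion really does contain a repeated generator. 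A cleaner fallback is to start from the classical Hall-Witt identity (which holds in any group) and simplify it in $RF(m)$ using Proposition~\ref{prop inverses in RF}, which should collapse the conjugation factors and inverses into the required three-term form.
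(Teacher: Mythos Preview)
Your reduction to the Jacobi form $[[a,b],c]\,[[c,a],b]\,[[b,c],a]=1$ is valid, and your Hall--Witt fallback is in fact a complete and clean proof on its own---no case split is needed. The identity from \cite[Theorem 5.1 (12)]{MKS76} expresses $[[a,b],c][[b,c],a][[c,a],b]$ as a product of conjugates of $[a,b]^{\pm1}$, $[b,c]^{\pm1}$, $[c,a]^{\pm1}$; any two of these share a generator and hence commute by Proposition~\ref{prop: RF commute}, so they can be reordered and cancelled in pairs regardless of whether the generator sets of $a,b,c$ overlap. Your case split and your ``approach (a)'' (expanding via Proposition~\ref{prop: commutators} and tracking error terms in $RF(m)_{\alpha+\beta+\gamma+1}$) are thus unnecessary detours; approach (a) as written is also incomplete, since Proposition~\ref{prop: commutators} parts \pref{item Commutator product 1}--\pref{item Commutator product 2} govern $[a,bc]$ rather than $[[a,b],c]$, and you have not said how to get from one to the other.

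By comparison, the paper's proof takes a different route: it expands $[[a,b],c]$ letter by letter, repeatedly using $xy=yx[x,y]$ to shuffle $a,b,c$ past each other and accumulate weight-two and weight-three commutators, then invokes Proposition~\ref{prop: RF commute} to let these commute and cancel. This is more hands-on but entirely self-contained, avoiding any appeal to the Hall--Witt identity. The paper also notes (and then sets aside) the Hall--Witt route you ended up at---so your fallback is a legitimate alternative proof that the authors chose not to present.
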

\begin{proof}

This can be shown directly by expanding out the commutator $[[a,b],c]$ and then using that $xy = yx[x,y]$ to follow the algorithm that realizes the Hall Basis Theorem \cite[Theorem 5.13 A]{MKS76} to gather terms together.   What results is
$$[[a,b],c] = [\Inv{a},b]~ [\Inv{a},c] [\Inv b,c][[\Inv{b},c],a][a,b][a,c][[a,c],b][b,c].$$
Verifying the preceding step is a useful exercise in commutator calculus.
Each pair of commutators in this product has at least one of $a$, $b$, and $c$ in common, and so by Proposition~\ref{prop: RF commute} they commute in $RF(n)$.  Additionally, by Proposition~\ref{prop inverses in RF},
$[\Inv{x},y] = \Inv{[x,y]}$ whenever $x$ and $y$ are elementary commutators. As a consequence, most of the terms in this product cancel.
Finally, $[[\Inv{b},c],a] = [\Inv{[b,c]},a]=[[c,b],a]$ by Proposition~\ref{prop inverses in RF}. An alternative yet similar argument can be composed by starting with \cite[Theorem 5.1 (12)]{MKS76} and then using properties of the reduced free group.
\end{proof}

We are now ready to progress in earnest to the proof of Theorem~\ref{hackey idea}.   
\begin{lemma}\label{lem: commutator as nice product}
If $c$ is an elementary commutator of weight $\wt(c)\ge 2$ in $RF(m)$, then there is a sequence $c_1,\dots, c_{N(c)}$ of elementary commutators  of weight $\wt(c)-1$ 
and $i_1,\dots, i_{N(c)}<m$ so that 
$$
c=\left(\Prod_{j=1}^{N(c)} [c_j, x_{i_j}]\right) c'
$$
with $c'\in RF(m)_{\wt(c)+1}$.
\end{lemma}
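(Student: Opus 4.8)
The plan is to induct not on the weight $w=\wt(c)$ of the whole commutator but on the weight of its \emph{right-hand argument}, repeatedly invoking the rearrangement identity of Proposition~\ref{prop:rearrage} to push a single generator into the outermost right slot, and then to clean up any slots occupied by $x_m$ using the fact (Proposition~\ref{prop: RF commute}) that the normal subgroup of $RF(m)$ generated by $x_m$ is abelian. The weight-$(w-1)$ commutators that get produced will never be touched again, which is why no induction on $w$ is needed.

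First I would prove the intermediate claim that every weight-$w$ commutator $c$ equals, modulo $RF(m)_{w+1}$ (in fact exactly), a product $\prod_j [c_j,x_{i_j}]^{\pm 1}$ where each $c_j$ is a commutator of weight $w-1$ and each $i_j\in\{1,\dots,m\}$ is arbitrary. Write $c=[a,b]$ with $a,b$ commutators. If $\wt(b)=1$ then $b=x_i$ and $c=[a,x_i]$ is already of this shape with $c_1=a$. If $\wt(b)\ge 2$, write $b=[b_1,b_2]$; combining Proposition~\ref{prop:rearrage} with $[u,v]^{-1}=[v,u]$ and Proposition~\ref{prop inverses in RF} yields the identity
\[
[a,[b_1,b_2]] = [[a,b_1],b_2]\,[[b_2,a],b_1].
\]
Each factor on the right is a commutator whose right-hand argument ($b_2$, respectively $b_1$) has weight strictly less than $\wt(b)$ and whose left-hand argument ($[a,b_1]$, respectively $[b_2,a]$) is again a commutator, so the claim follows by induction on $\wt(b)$. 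Signs and any inverses are absorbed via $[u,v]^{-1}=[u^{-1},v]=[u,v^{-1}]$ (Proposition~\ref{prop inverses in RF}), so that each $c_j$ is a weight-$(w-1)$ commutator or the inverse of one and each $x_{i_j}$ is a positive generator.

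Second I would upgrade the indices so that $i_j<m$. It suffices to treat a single factor $[c_0,x_m]$ with $c_0$ a weight-$(w-1)$ commutator; the inverse case $[c_0^{-1},x_m]$ reduces to it by Proposition~\ref{prop inverses in RF}, and when $w=2$ one simply has $[x_j,x_m]=[x_m^{-1},x_j]$ with $j<m$. If the generator $x_m$ occurs in $c_0$, then by Proposition~\ref{prop: RF commute} $c_0$ lies in the abelian normal subgroup generated by $x_m$, so $[c_0,x_m]=1$ and the factor vanishes. If $x_m$ does not occur in $c_0$, write $c_0=[p,q]$ and apply Proposition~\ref{prop:rearrage} to get $[c_0,x_m]=[[x_m,q],p]\,[[p,x_m],q]$; now recurse exactly as in the first step — induction on the weight of the right-hand argument, here $p$, respectively $q$ — to move an \emph{honest generator of $c_0$} into the outermost right slot. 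Since $x_m$ does not occur in $c_0=[p,q]$, every generator that can be exposed this way has index $<m$, and at termination the left-hand argument is a commutator of weight $w-1$ (so it is a genuine commutator, as $w\ge 3$ in this branch). Assembling these substitutions over all factors with $i_j=m$ produces the desired expression, with $c'=1$, which in particular lies in $RF(m)_{w+1}$.

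The main obstacle is precisely this index constraint. Proposition~\ref{prop:rearrage} only permutes which of three commutators sits outermost, so by itself it cannot prevent $x_m$ from occupying a right slot; the resolution is the dichotomy above — a factor $[c_0,x_m]$ is harmless either because it is trivial (when $x_m$ is already buried in $c_0$, by the abelian-normal-subgroup property of $RF(m)$) or because $c_0$ uses only generators $\ne x_m$, so that the unfolding has an index-$<m$ generator available to bring outside. The remaining points to check are routine: the recursion terminates because each application of the identity strictly lowers the weight of the right-hand argument until it is a single generator, and all orientations and inverses are tracked cleanly using Propositions~\ref{prop inverses in RF} and \ref{prop: commutators}. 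I would also remark that, since every identity used is an exact identity in $RF(m)$, one in fact obtains the statement with $c'=1$; stating it modulo $RF(m)_{w+1}$ is only for convenience when the lemma is iterated in the proof of Theorem~\ref{hackey idea}.
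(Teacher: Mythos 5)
Your argument is correct, but it follows a genuinely different route from the paper's proof. The paper normalizes $c=[a,b]$ so that $x_m$ is not in $a$ (using that $[a,b]=1$ when $x_m$ is in both entries, and $[a,b]=[b^{-1},a]$ otherwise), inducts on $\wt(a)$, applies the lemma inductively to $a$ itself, and then distributes the resulting product across the bracket using bilinearity of commutators modulo the lower central series (Proposition~\ref{prop: commutators}~\pref{item Commutator product 2}) before invoking Proposition~\ref{prop:rearrage} and inducting once more; this is exactly why its conclusion is only an identity modulo $RF(m)_{w+1}$. You instead induct on the weight of the right-hand argument, using only the exact identity $[a,[b_1,b_2]]=[[a,b_1],b_2]\,[[b_2,a],b_1]$ (a consequence of Propositions~\ref{prop:rearrage} and~\ref{prop inverses in RF}) to reach a product $\prod_j[c_j,x_{i_j}]^{\pm1}$ with arbitrary indices, and then repair each factor with $i_j=m$ by the dichotomy: such a factor is trivial when $x_m$ is in $c_j$ (Proposition~\ref{prop: RF commute}), and otherwise re-expands, by the same right-slot recursion applied inside $c_j=[p,q]$, into factors whose right slots are generators occurring in $c_j$ and hence have index $<m$ (with the $w=2$ case handled by $[x_j,x_m]=[x_m^{-1},x_j]$). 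Both arguments are sound; yours trades the paper's single induction on the weight of the $x_m$-free side, together with its lower-central-series bookkeeping, for a two-pass exact rewriting, and in return yields the sharper statement with $c'=1$. That strengthening is more than the lemma requires, since in the proof of Theorem~\ref{hackey idea} the lemma is iterated modulo the lower central series anyway, so the error term $c'\in RF(m)_{w+1}$ costs nothing there.
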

\begin{remark}
Without the condition $i_1,\dots, i_{N(c)} <m$, this lemma 
has no content.
The key result is that any such $c$ can be realized without any factors of the form $[c_j,x_m]$ ever appearing.  We encourage the reader to run the proof below on the example of $[[x_1, x_2],x_m]$.
\end{remark}
\begin{proof}

Let $c$ be an elementary commutator of weight $w\ge 2$.  Then $c=[a,b]$ for some elementary commutators with $\wt(a)+\wt(b)=\wt(c)$.  If $x_m$ is in both $a$ and $b$ then $[a,b]=1$ by Proposition~\ref{prop: RF commute}.  Without loss of generality, we may assume that $x_m$ is not in $a$, for if $x_m$ were in $a$ then by Proposition~\ref{prop inverses in RF} $c=[a,b]=[b,a]^{-1}=[b^{-1},a]$ would be in $RF(m)$. 

We now proceed by induction on the weight of $a$.  As a base case, if $a$ is weight 1 then $a=x_i$ (or $x_i^{-1}$) for some $i< m$ and $c=[x_i,b]=[b^{-1}, x_i]$ (or $c=[x_i^{-1},b] = [b, x_i]$) so we are done.  We now assume that $\wt(a)>1$ so that $a=[\alpha,\beta]$.  As $x_m$ is not in $a$, it is in neither $\alpha$ nor $\beta$.  Finally, since $\wt(a) = \wt(\alpha)+\wt(\beta)$,  $\wt(\alpha)<\wt(a)$.   We now appeal to our inductive assumption to conclude that $a = \Prod_{j=1}^{N(a)} [a_j, x_{i_j}] a'$ where $i_j<m$, $a_j$ is an elementary commutator with $\wt(a_j)=\wt(a)-1$ and $a'\in RF(m)_{\wt(a)+1}$.  
Thus,
$$
c=[a,b] = \left[\Prod_{j=1}^{N(a)} [a_j, x_{i_j}] a', b\right].
$$

Notice that $\Prod_{j=1}^{N(a)}[a_j, x_{i_j}]\in RF(m)_{\wt(a_j)+1} =  RF(m)_{\wt(a)}$, $a'\in RF(m)_{\wt(a)+1}$ and $b\in RF(m)_{\wt(b)}$.  Appealing to Proposition~\ref{prop: commutators} \pref{item Commutator product 2}, it follows that modulo $RF(m)_{2\wt(a)+1+\wt(b)}\subseteq RF(m)_{\wt(c)+1}$,
$$
c \equiv \left[\Prod_{j=1}^{N(a)} [a_j, x_{i_j}], b\right]\cdot \left[ a', b\right].
$$
Since $\left[ a', b\right]\in RF(m)_{\wt(a)+1+\wt(b)} = RF(m)_{\wt(c)+1}$, we have that modulo $RF(m)_{\wt(c)+1}$, 
$$c\equiv\left[\Prod_{j=1}^{N(a)} [a_j, x_{i_j}], b\right].$$
Since each $[a_j, x_{i_j}]\in RF(m)_{\wt(a)}$ and $b \in RF(m)_{\wt(b)}$, we may iteratively apply Claim~\pref{item Commutator product 2} of Proposition~\ref{prop: commutators} to see that modulo $RF(m)_{2\cdot \wt(a)+\wt(b)}\subseteq RF(m)_{\wt(c)+1}$, 
$$c\equiv\Prod_{j=1}^{N(a)} \left[ [a_j, x_{i_j}], b\right].
$$
Next apply Proposition~\ref{prop:rearrage} to obtain,  
$$c\equiv\Prod_{j=1}^{N(a)} \left[ [b, x_{i_j}], a_j\right]\left[[a_j, b], x_{i_j}\right]\mod RF(m)_{\wt(c)+1}.
$$
Recall that $x_m$ is not in $a_j$ and $\wt(a_j)=\wt(a)-1$.  Thus, we may again apply the inductive assumption and conclude that for each $j=1,\dots N(a)$, there is some $ N\left([ [b, x_{i_j}], a_j]\right)\in \N$ so that
$$\left[ [b, x_{i_j}], a_j\right]\equiv \Prod_{k=1}^{N\left([ [b, x_{i_j}], a_j]\right)} [c_{j,k}, x_{i_{k,j}}]\mod{RF(m)_{\wt(c)+1}}
$$
with $\wt(c_{j,k}) = \wt(c)-1$
and $i_{k,j}<m$.  Putting this together, 
$$
c\equiv\Prod_{j=1}^{N(a)}  \left(\Prod_{k=1}^{N\left([ [b, x_{i_j}], a_j]\right)} [c_{j,k}, x_{i_{k,j}}]\right)\left[[a_j, b], x_{i_j}\right]\mod{RF(m)_{\wt(c)+1}}
$$
which completes the proof.
\end{proof}

We now have everything we need to prove Theorem~\ref{hackey idea}.

  \begin{proof}[Proof of Theorem~\ref{hackey idea}]
Let $z\in RF(m)$.  We will inductively show that for all $p\in \N $,
$$z \equiv \prod_{k=0}^{m-1} x_{m-k}^{\alpha_{m-k}}  \prod_{k=1}^{m-1}z_k \mod RF(m)_p
$$
where $z_k = [\omega_k,x_k]$ or $[x_k,\omega_k]$ for each $k$.
When $p=2$, $RF(m)/RF(m)_p=\Z^m$ is the free Abelian group on $x_1,\dots, x_m$, so 
$$z \equiv x_m^{\alpha_m}\cdot x_{m-1}^{\alpha_{m-1}}\cdot\dots \cdot x_1^{\alpha_1}\cdot z' = \prod_{k=0}^{m-1} x_{m-k}^{\alpha_{m-k}}\mod{RF(m)_2}.$$
This completes the proof when $p=2$.  


For convenience, we set $z_0=\Prod_{k=0}^{m-1} x_{m-k}^{\alpha_{m-k}}$.  We now inductively assume that 
$$
z=z_0\prod_{k=1}^{m-1}z_k\cdot z'
$$ with $z'\in RF(m)_{p}$ and $z_k$ as in the theorem. Appealing to Proposition~\ref{prop: commutators} \pref{LCS Abelian}, modulo $RF(m)_{p+1}$, $z'$ is a product of weight $p$ elementary commutators and so we can express
$z'\equiv \Prod_{q} c_q\mod{RF(m)_{p+1}}$ where each $c_q$ is a weight $p$ elementary commutator.
Appealing to Lemma~\ref{lem: commutator as nice product},
$c_q\equiv\Prod_r[d_{q,r},x_{i_{q,r}}]\mod{RF(m)_{p+1}}$ with $i_{q,r}<m$ and $\wt(d_{q,r})=p-1$.   
%
%
Still working modulo $RF(m)_{p+1}$, these factors commute by Proposition~\ref{prop: commutators} \pref{LCS Abelian}, so we can relabel and reorder this product so as to sort by $x_{i_{q,r}}$'s.  
$$z'\equiv\prod_{i=1}^{m-1}\prod_q [d_{q,i}, x_{i}] \mod RF(m)_{p+1}.$$ 
We start by rewriting $\Prod_q [d_{q,i},x_i]$.  For each $i$, if $z_i=[\omega_i,x_i]$, then we use  Proposition~\ref{prop: commutators} \pref{item Commutator product 2} to say
 $$\prod_q[d_{q,i}, x_i]\equiv\left[\prod_q d_{q,i},x_i\right]\mod RF(m)_{p+1}.$$  Set $D_i=\Prod_q d_{q,i}$ and $W_i=[D_i,x_i]$.
 
 On the other hand, if $z_i=[x_i,\omega_i]$, then we use Proposition~\ref{prop: commutators} \pref{commutator inverse -1}, \pref{commutator inverse 2}, and \pref{item Commutator product 2}, 
 $$\Prod_q[d_{q,i}, x_i] = \Prod_q[x_i,d_{q,i}]^{-1}\equiv \Prod_q[x_i,d_{q,i}^{-1}]\equiv \left[x_i,\Prod_q d_{q,i}^{-1}\right]\mod RF(m)_{p+1}.$$ Set $D_i=\Prod_q d_{q,i}^{-1}$ and $W_i=[x_i, D_i]$. 
 We now have  
$$z'\equiv\prod_{i=1}^{m-1} W_i\mod RF(m)_{p+1},$$
with $W_i=[D_i,x_i]$ or $[x_i,D_i]$ and $D_i\in RF(m)_{p-1}$. Therefore,
\begin{eqnarray*}
z&=&z_0 \prod_{k=1}^{m-1}z_k\cdot z'
\equiv z_0 \prod_{k=1}^{m-1}z_k\cdot \prod_{k=1}^{m-1} W_k\,  \mod RF(m)_{p+1}.
\end{eqnarray*}
As $W_k\in RF(m)_{p}$ is central in $RF(m)/RF(m)_{p+1}$ it follows that
$$
z\equiv z_0 \prod_{k=1}^{m-1}z_kW_k \mod RF(m)_{p+1}.
$$
Appealing to Proposition~\ref{prop: commutators} \pref{item Commutator product 2}, either
$$
z_kW_k = [\omega_k, x_k][D_k,x_k]\equiv[\omega_kD_k, x_k]\,\mod RF(m)_{p+1}
$$
or
$$
z_kW_k = [x_k,\omega_k][x_k,D_k]\equiv[ x_k,\omega_kD_k]\,\mod RF(m)_{p+1}.
$$
If we set $\omega_k'=\omega_k D_k$ and where $z_k' = [x_k, \omega_k']$ or $[\omega_k', x_k]$, then 
$$
z \equiv z_0\Prod_{k=1}^{m-1}z_k'\mod{ RF(m)_{p+1}},
$$
 completing the inductive step.  We conclude that for every $p\in\N$, 
$$
z\equiv \prod_{k=0}^{m-1} x_{m-k}^{\alpha_{m-k}} \prod_{k=1}^{m-1}z_k \mod RF(m)_{p}
$$
where $z_k$ is equal to $[\omega_k,x_k]$ or $[x_k,\omega_k]$ as desired.  Since $RF(m)_{m+1}=1$, taking $p=m+1$ completes the proof.
\end{proof}

\section{Link homotopy and $\Delta$-moves: The proof of Theorem \ref{thm: main Delta}}\label{sect: Delta change}

A very similar proof to that of Theorem~\ref{hackey idea} produces the following result.

\begin{theorem}\label{DeltaMoveVersion}
Any element $z \in RF(m)$ can be written in the form
$$z=
\prod_{i=1}^m x_i^{\alpha_i}\prod_{1\le i\le j\le m} [x_j, x_i]^{\beta_{i,j}} \prod_{i=1}^{m-1}\prod_{j=1}^{m-1}[[\omega_{i,j}, x_i],x_j]
$$
with $\alpha_i, \beta_{i,j},\in \Z$ and $\omega_{i,j}\in RF(m)$.    
\end{theorem}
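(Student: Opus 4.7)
The plan is to mimic the proof of Theorem~\ref{hackey idea}: build the normal form inductively modulo successive terms of the lower central series, exploiting $RF(m)_{m+1}=1$ so the induction terminates after finitely many steps.  The base cases are easy.  Modulo $RF(m)_2$ any element equals $\prod_i x_i^{\alpha_i}$ since the abelianization of $RF(m)$ is $\Z^m$.  Modulo $RF(m)_3$ the quotient $RF(m)_2/RF(m)_3$ is free abelian on $\{[x_j,x_i]\mid i<j\}$ (the defining relations of $RF(m)$ all have weight $\ge 3$ and so contribute nothing here), so the remainder factors as $\prod_{i<j}[x_j,x_i]^{\beta_{i,j}}$ times an element of $RF(m)_3$; the diagonal terms $i=j$ are trivial, so we may harmlessly allow the full range $1\le i\le j\le m$ displayed in the statement.

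For the inductive step the substitute for Lemma~\ref{lem: commutator as nice product} is its ``twice-applied'' version: any commutator of weight $w\ge 3$ is congruent modulo $RF(m)_{w+1}$ to a product of terms $[[e,x_i],x_j]$ with $\wt(e)=w-2$ and $i,j<m$.  To prove this, first apply Lemma~\ref{lem: commutator as nice product} to write a weight-$w$ commutator as $\prod_k[d_k,x_{j_k}]$ modulo $RF(m)_{w+1}$, with $\wt(d_k)=w-1$ and $j_k<m$.  Since $w-1\ge 2$, apply Lemma~\ref{lem: commutator as nice product} again to each $d_k$, obtaining $d_k\equiv\prod_\ell[e_{k,\ell},x_{i_{k,\ell}}]$ modulo $RF(m)_w$ with $\wt(e_{k,\ell})=w-2$ and $i_{k,\ell}<m$.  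Proposition~\ref{prop: commutators}(\ref{commutation well defined LCS}) lets me transport this congruence inside $[d_k,x_{j_k}]$ modulo $RF(m)_{w+1}$, and Proposition~\ref{prop: commutators}(\ref{item Commutator product 2}) then distributes the outer commutator across the product to produce $[d_k,x_{j_k}]\equiv\prod_\ell[[e_{k,\ell},x_{i_{k,\ell}}],x_{j_k}]$ modulo $RF(m)_{w+1}$, as desired.

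Once the weight-$w$ contribution is written as $\prod[[e,x_i],x_j]$, a further application of Proposition~\ref{prop: commutators}(\ref{item Commutator product 2}) in both inner and outer slots gives $[[a,x_i],x_j]\cdot[[b,x_i],x_j]\equiv[[ab,x_i],x_j]$ modulo $RF(m)_{w+1}$, so I can merge all factors sharing the same index pair $(i,j)$ into a single $[[\omega^{(w)}_{i,j},x_i],x_j]$.  Absorbing $\omega^{(w)}_{i,j}$ into the running $\omega_{i,j}$ accumulated from previous stages is legitimate modulo $RF(m)_{w+1}$ by the same bilinearity: the existing factor $[[\omega_{i,j},x_i],x_j]$ lives in $RF(m)_3$ and the new piece lives in $RF(m)_w$, so any commutator corrections produced by the combination lie in $RF(m)_{w+1}$.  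Running this procedure for $w=3,4,\dots,m$ and invoking $RF(m)_{m+1}=1$ closes the induction.

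The main obstacle is the uniform bookkeeping: every rewriting introduces correction terms of strictly higher weight, and one must verify that each lies in $RF(m)_{w+1}$ for the current induction weight $w$.  The reason the statement lines up so cleanly is that each of the two applications of Lemma~\ref{lem: commutator as nice product} forces its outermost bracket index to be strictly less than $m$, delivering precisely the constraint $i,j\in\{1,\dots,m-1\}$ required by the index range of $\omega_{i,j}$ in the theorem.
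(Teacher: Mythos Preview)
Your proposal is correct and follows essentially the same approach as the paper's proof: induct modulo successive terms of the lower central series, apply Lemma~\ref{lem: commutator as nice product} twice to reduce the weight-$p$ error term to a product of double commutators $[[e,x_i],x_j]$ with $i,j<m$, sort and merge by index pair using Proposition~\ref{prop: commutators}\pref{item Commutator product 2}, and absorb into the running $\omega_{i,j}$'s. The only cosmetic difference is that the paper dispatches the base case $p\le 3$ by citing Hall's basis theorem rather than arguing directly, and the paper first collects the single-bracket terms into $[D_i,x_i]$ before applying Lemma~\ref{lem: commutator as nice product} a second time, whereas you apply it factor-by-factor; these are equivalent modulo $RF(m)_{w+1}$.
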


\begin{proof}[Proof of Theorem~\ref{DeltaMoveVersion}]

Let $z\in RF(m)$.  We will inductively show that for all $p\in \N$,
$$z \equiv \prod_{i=1}^mx_i^{\alpha_i}\prod_{1\le i\le j\le m} [x_j, x_i]^{\beta_{i,j}} \prod_{i=1}^{m-1}\prod_{j=1}^{m-1}[[\omega_{i,j}, x_i],x_j] \mod{RF(m)_p}.$$
When $p\le 3$, the result follows from the Hall Basis Theorem, \cite[Theorem 5.13]{MKS76}.

We now take $p\ge 3$ and inductively assume that $$z =  \prod_{i=1}^m x_i^{\alpha_i}\prod_{1\le i\le j\le m} [x_j, x_i]^{\beta_{i,j}} \prod_{i=1}^{m-1}\prod_{j=1}^{m-1}[[\omega_{i,j}, x_i],x_j]\cdot z'$$ with $z'\in RF(m)_{p}$.  By Proposition~\ref{prop: commutators} \pref{LCS Abelian}, 
$z'\equiv \Prod_{q} c_q \mod{RF_{p+1}}$ where each $c_q$ is an elementary weight $p$ commutator.  As in the proof of Theorem~\ref{hackey idea} we apply Lemma~\ref{lem: commutator as nice product} to each $c_q$, to see that modulo $RF(m)_{p+1}$, $z'$ can be rewritten as a product in the form $z'\equiv \Prod_r[d_{r},x_{i_{r}}]$ where $i_{r}<m$ and $d_r$ is a weight $p-1$ commutator not containing $x_{i_r}$. This product commutes modulo $RF(m)_{p+1}$ so that we may sort it by $i_r$.  Next, using claim \pref{item Commutator product 2} of Proposition \ref{prop: commutators}, we see that 
$$z'\equiv \Prod_{i=1}^{m-1}[D_i,x_i]\mod RF(m)_{p+1}$$
where $D_i\in RF(m-1)_{p-1}$ sits in the $(p-1)$'th term of the lower central series of the copy of $RF(m-1)$ generated by $x_1, \dots, x_{i-1}, x_{i+1},\dots, x_m$.  
Applying Proposition~\ref{prop: commutators} \pref{LCS Abelian}, we may write each $D_i$ as a product of elementary weight $p-1$ commutators, and then apply Lemma \ref{lem: commutator as nice product} to each rewrite each of these elementary commutators,
$$z'\equiv \Prod_{i=1}^{m-1}\left[\Prod_{q=1}^{N(D_i)}[e_{iq},x_{i_q}],x_i\right]\mod RF(m)_{p+1}$$
where $i_q<m$, each $e_{iq}$ is an elementary commutator of weight $p-2$, and $N(D_i)$ is number of commutators needed for $D_i$.  Again using commutativity and claim \pref{item Commutator product 2} of Proposition \ref{prop: commutators} similarly to before, 
$$z'\equiv \Prod_{i=1}^{m-1}\Prod_{j=1}^{m-1}\left[[E_{ij},x_{j}],x_i\right]\mod RF(m)_{p+1},$$
where $E_{ij}\in RF(m)_{p-2}$.  Since $RF(m)_p$ is central in $RF(m)/RF(m)_{p+1}$, 
$$
\begin{array}{rcl}z &\equiv&  \displaystyle\prod_{i=1}^m x_i^{\alpha_i}\prod_{1\le i\le j\le m} [x_j, x_i]^{\beta_{i,j}} \prod_{i=1}^{m-1}\prod_{j=1}^{m-1}\left([[\omega_{i,j}, x_i],x_j][[E_{ij}, x_i],x_j]\right)
\\
&\equiv&\displaystyle \prod_{i=1}^m x_i^{\alpha_i}\prod_{1\le i\le j\le m} [x_j, x_i]^{\beta_{i,j}} \prod_{i=1}^{m-1}\prod_{j=1}^{m-1}[[\omega_{i,j}E_{ij}, x_i],x_j].\end{array}$$

We now inductively conclude that the claim holds modulo $RF(m)_p$ for any $p$.  Since $RF(m)_{m+1}=1$, we can set $p=m+1$ to complete the proof.
\end{proof}

\begin{corollary}
\label{nDeltaL for RF}
If $T$ has vanishing pairwise linking numbers and is in the image of $RF(n-1)\to \H(n)$ then 
$$n_{\Delta}(T)\le 2(n-2)(n-3)+\Sum_{1\le i<j< n} |\mu_{ijn}(T)|.$$
\end{corollary}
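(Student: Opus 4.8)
The plan is to run the proof of Corollary~\ref{nhL for RF} with Theorem~\ref{hackey idea} replaced by its $\Delta$-move analogue, Theorem~\ref{DeltaMoveVersion}. First I would write $T=\phi(S)$ for some $S\in RF(n-1)$ and apply Theorem~\ref{DeltaMoveVersion} with $m=n-1$ to express
$$S=\prod_{i=1}^{n-1}x_i^{\alpha_i}\ \prod_{1\le i\le j\le n-1}[x_j,x_i]^{\beta_{i,j}}\ \prod_{i=1}^{n-2}\prod_{j=1}^{n-2}\bigl[[\omega_{i,j},x_i],x_j\bigr].$$
Since the first $n-1$ components of $\phi(S)$ form the trivial string link, the only pairwise linking numbers of $T$ that can be nonzero are $\lk(T_i,T_n)=\alpha_i$ for $i<n$; the vanishing hypothesis forces every $\alpha_i=0$, so the first product maps to the trivial element of $\H(n)$. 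Applying $\phi$, then $n_\Delta$, and using subadditivity (Proposition~\ref{prop: basics of nhl}\pref{item:nh(product)}), this reduces the task to bounding $\sum_{1\le i<j\le n-1}n_\Delta([x_{jn},x_{in}]^{\beta_{i,j}})$ and $\sum_{i,j}n_\Delta([[\phi(\omega_{i,j}),x_{in}],x_{jn}])$.

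For the degree-two part I would note that $[x_{jn},x_{in}]^{\beta}=\phi([x_j,x_i])^{\beta}$ is undone by $|\beta|$ single $C_2$-clasper surgeries, by Proposition~\ref{prop: basics of nhl}\pref{item:nh(product)} and~\pref{item:ndelta(commutator)}, and that $|\beta_{i,j}|=|\mu_{ijn}(T)|$. The latter I would extract from the Magnus expansion: the $n$-th longitude of $\phi(S)$ is $S$ with each $x_k$ replaced by the $k$-th meridian (as in the proof of Proposition~\ref{prop: basics of nhl}\pref{item:ndelta(commutator)}), its degree-one coefficients vanish because the linking numbers do, and its degree-two coefficients come only from $\prod_{i<j}[x_j,x_i]^{\beta_{i,j}}$ since the weight-three commutators lie in $RF(n-1)_3$ and contribute only in degree at least three. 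Thus the degree-two part contributes at most $\sum_{1\le i<j<n}|\mu_{ijn}(T)|$.

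The heart of the proof is the uniform bound $n_\Delta([[\phi(\omega),x_{in}],x_{jn}])\le 2$ for every $\omega\in RF(n-1)$. The key observation is that $[\omega,x_i]=(\omega^{-1}x_i^{-1}\omega)\cdot x_i$ is a product of \emph{two} elements of the normal closure $N_i$ of $x_i$, and $N_i$ is abelian in $RF(n-1)$ (the defining relation; cf.\ the proof of Proposition~\ref{prop: RF commute}). Expanding by Proposition~\ref{prop: commutators}\pref{item Commutator product 1},
$$[[\omega,x_i],x_j]=[\omega^{-1}x_i^{-1}\omega,\,x_j]\cdot\bigl[[\omega^{-1}x_i^{-1}\omega,x_j],\,x_i\bigr]\cdot[x_i,x_j],$$
and the middle factor vanishes since $[\omega^{-1}x_i^{-1}\omega,x_j]\in N_i$ (as $N_i\unlhd RF(n-1)$) and $x_i\in N_i$. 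Now $[x_i,x_j]$ maps to $[x_{in},x_{jn}]$, which has $n_\Delta\le 1$ by Proposition~\ref{prop: basics of nhl}\pref{item:ndelta(commutator)}, while $[\omega^{-1}x_i^{-1}\omega,x_j]$ has the shape $[rx_i^{-1}r^{-1},x_j]$ with $r=\omega^{-1}$; setting $g=rx_ir^{-1}\in N_i$, Proposition~\ref{prop: commutators}\pref{commutator inverse} together with $[g^{-1},x_j]\in N_i$ gives $[rx_i^{-1}r^{-1},x_j]=[rx_ir^{-1},x_j]^{-1}$, so its $\phi$-image is the inverse in $\H(n)$ of an element of $n_\Delta\le 1$ (Proposition~\ref{prop: basics of nhl}\pref{item:ndelta(commutator)}). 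As $n_\Delta$ is unchanged under inversion in $\H(n)$ (the reflection-of-claspers argument used for Theorem~\ref{thm: homotopy invariance}), each weight-three term contributes at most $1+1=2$.

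Since there are $(n-2)^2$ such terms, I would conclude
$$n_\Delta(T)\le \sum_{1\le i<j<n}|\mu_{ijn}(T)|+2(n-2)^2\le \sum_{1\le i<j<n}|\mu_{ijn}(T)|+2(n-1)(n-2),$$
using $(n-2)^2\le(n-1)(n-2)$. The main obstacle is precisely the weight-three estimate: expanding $[\omega,x_i]$ as a generic product of conjugates of $x_i$ yields unboundedly many factors, so one must exploit the particular two-term factorization $[\omega,x_i]=(\omega^{-1}x_i^{-1}\omega)\cdot x_i$ inside the abelian subgroup $N_i$ to keep the count at $2$ independently of $\omega$.
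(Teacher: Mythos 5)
Your proposal is correct and follows essentially the same route as the paper: apply Theorem~\ref{DeltaMoveVersion}, kill the linear part using the vanishing linking numbers, identify $|\beta_{i,j}|$ with $|\mu_{ijn}(T)|$, and spend at most two $\Delta$-moves per weight-three term via Proposition~\ref{prop: basics of nhl}\pref{item:ndelta(commutator)}. Your factorization $[[\omega,x_i],x_j]=[\omega^{-1}x_i^{-1}\omega,x_j]\cdot[x_i,x_j]$ is just the paper's identity $[[\omega,x_i],x_j]=x_i^{-1}[\omega^{-1}x_i^{-1}\omega,x_j]x_j^{-1}x_ix_j$ rearranged inside the abelian normal closure of $x_i$, and the extra details you supply (the Magnus-expansion justification of $\beta_{i,j}=\pm\mu_{ijn}$, invariance of $n_\Delta$ under inversion, and the crude count $(n-2)^2\le(n-1)(n-2)$ versus the paper's $(n-2)(n-3)$) only make explicit points the paper passes over.
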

\begin{proof}

We begin by using Theorem~\ref{DeltaMoveVersion} with $m=n-1$ to rewrite $T$.  Note that as $T$ has vanishing pairwise linking numbers each of the $a_i$ in  Theorem~\ref{DeltaMoveVersion} vanish, and $T=\phi(z)$ where
$$
z=\prod_{1\le i\le j\le n-1} [x_j, x_i]^{\beta_{i,j}} \prod_{i=1}^{n-2}\prod_{j=1}^{n-2}[[\omega_{i,j}, x_i],x_j].
$$

Since $[[\omega_{i,j}, x_i],x_j]=1$ whenever $i=j$, the product $\prod_{i=1}^{n-2}\prod_{j=1}^{n-2}[[\omega_{i,j}, x_i],x_j]$ has at most $(n-2)(n-3)$ terms. Each of these terms reduces as 
$$\alpha_{ij}:=[[\omega_{i,j}, x_i],x_j] = x_i^{-1} [ \omega_{ij}^{-1} x_i^{-1}\omega_{ij}, x_j] x_j^{-1} x_i x_j.$$

By Proposition~\ref{prop: basics of nhl} \pref{item:ndelta(commutator)}, $n_{\Delta}(\phi[ \omega_{ij}^{-1} x_i^{-1}\omega_{ij}, x_j])\le 1$, so that after one $\Delta$-move, $\phi(\alpha_{ij})$ is reduced to $\phi([x_i,x_j])$ which in turn has $n_{\Delta}=1$.  Thus $n_{\Delta}(\phi(\prod_{i=1}^{n-2}\prod_{j=1}^{n-2}[[\omega_{i,j}, x_i],x_j]))\le 2(n-2)(n-3)$. 

We focus now on the remaining product, $\Prod_{1\le i\le j\le n-1} [x_j, x_i]^{\beta_{i,j}}$.  Again appealing to Proposition~\ref{prop: basics of nhl} \pref{item:ndelta(commutator)}, $n_{\Delta}(\phi( \Prod_{1\le i\le j\le n-1} [x_j, x_i]^{\beta_{i,j}}))\le \Sum_{1\le i\le j\le n-1} |\beta_{i,j}|$.  The proof is completed by noting that
 $|\beta_{i,j}| = |\mu_{ijn}(\widehat{T})|$. In order to see this, first observe that the closure of $\phi([x_j, x_i])$ is a split link $L$ with $L_i\cup L_j\cup L_n$ tied into the Borromean rings and the other components unlinked.  In \cite{M1}, Milnor shows $\mu_{ijn}(L)=-1$.  Since the first non-vanishing Milnor invariant is additive by \cite{O1}, we conclude $\mu_{ijn}(T) = \mu_{ijn}(\widehat{T}) = -\beta_{ij}$.  Putting this all together, $n_{\Delta}(T)\le 2(n-2)(n-3)+\Sum_{1\le i<j< n} |\mu_{ijn}(T)|.$
\end{proof}

\thmMainDelta

\begin{proof}

    
Our proof follows an induction identical to that of Theorem~\ref{upper bound theorem main}.  
 Realize $L$ as $L=\widehat T$ for some $T\in \H(n)$. As a consequence of the split exact sequence of \pref{exact sequence},  $T=\phi(t)s(T')$ with $t\in RF(n-1)$ and $T'\in \H(n-1)$. By Corollary~\ref{nDeltaL for RF}, 
 $$n_\Delta (\phi(t))\leq 2(n-2)(n-3)+ \Sum_{1\le i<j\le n-1 }|\mu_{ijn}(L)|.$$  Appealing to induction, 
 $$
 n_\Delta(s(T'))=n_{\Delta}(T')\le \Lambda_3(T')
 +\frac{2}{3}((n-1)^3 - 6(n-1)^2 + 11(n-1) - 6).
 $$
Adding these together,
 $$
 n_\Delta(L)= n_\Delta(T)\le n_\Delta(\phi(t))+n_\Delta(s(T'))\le 
\Lambda_3(L)
+\frac{2}{3} (n^3 - 6n^2 + 11n - 6),
 $$
 completing the proof.
\end{proof}

\section{Computing the homotopy unlinking number of 4-component links.}\label{sect: 4 component}

In \cite{DOP22}, the second author, along with Orson and Park, determine the homotopy trivializing number of any 3-component string link,
\begin{proposition*}[Theorem 1.7 of \cite{DOP22}]
Let $L$ be a 3-component string link, then 
$$n_{h}(L) = \begin{cases}
\Lambda(L) &\text{if }\Lambda(L)\neq 0,\\
2&\text{ if }\Lambda(L)=0 \text{ and }\mu_{123}(L)\neq 0,\\
0&\text{otherwise}.
\end{cases}$$
\end{proposition*}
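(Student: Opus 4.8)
The plan is to obtain this as a quick consequence of the machinery assembled earlier — chiefly Theorem~\ref{upper bound theorem main} — together with the classical fact that a $3$-component link with vanishing pairwise linking numbers is homotopy trivial exactly when $\mu_{123}$ vanishes. I would first record the universal lower bound $n_h(L)\ge\Lambda(L)$: in any sequence of crossing changes carrying $L$ to a homotopy trivial link $L'$, the number of changes occurring between a fixed pair $L_i,L_j$ is at least $|\lk(L_i,L_j)|$, since each such change moves that linking number by $\pm1$ while $\lk(L'_i,L'_j)=0$; summing over the three pairs bounds the length of the sequence below by $\Lambda(L)$. After this, only upper bounds — and, in the middle case, the exclusion of $n_h(L)=1$ — remain.

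For the case $\Lambda(L)\neq0$ I would use that $n_h$ and $\Lambda$ are both unchanged by relabeling the components, so I may permute them so that $\lk(L_1,L_3)\neq0$; this is possible precisely because not all pairwise linking numbers vanish. For $n=3$ the only index pair with $2\le i+1<j\le 3$ is $(1,3)$, so after this relabeling the quantity $Q(L)$ of Theorem~\ref{upper bound theorem main} equals $0$, and that theorem gives $n_h(L)\le\Lambda(L)$, hence equality. (Alternatively one could work directly in $\H(3)\cong RF(2)\rtimes\H(2)$: write the associated string link, via Theorem~\ref{hackey idea}, as $\phi(x_1^{a}x_2^{b}[x_1,x_2]^{c})\cdot s(x_{12}^{\ell})$ with $a=\lk(L_1,L_3)$, $b=\lk(L_2,L_3)$, $\ell=\lk(L_1,L_2)$, and use the absorption trick from the proof of Corollary~\ref{nhL for RF} to swallow the Borromean commutator into whichever of $x_{13}^{a},x_{23}^{b},x_{12}^{\ell}$ has nonzero exponent; when only $\ell\ne 0$ this requires moving $[x_{13},x_{23}]$ onto the $x_{12}$ strand.)

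For the case $\Lambda(L)=0$ I would invoke Milnor's classification of $3$-component links up to link homotopy \cite{M1,HL1}: when all pairwise linking numbers vanish, $\mu_{123}(L)$ is a well-defined integer and is a complete invariant, so $L$ is homotopy trivial if and only if $\mu_{123}(L)=0$, giving $n_h(L)=0$ in that subcase. If $\mu_{123}(L)\neq0$ then $n_h(L)\ge1$; to improve this to $n_h(L)\ge2$ I would observe that a single crossing change is either a self-crossing change — which preserves the link-homotopy class and hence $\mu_{123}$, so it cannot reach a homotopy trivial link — or a change between two distinct components, which produces a nonzero pairwise linking number and so again cannot reach a homotopy trivial link. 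The matching bound $n_h(L)\le2$ is immediate from Theorem~\ref{upper bound theorem main} since here $Q(L)=1$; concretely, the associated string link is link homotopic to a commutator $[\phi(\omega),x_{13}]$, and Proposition~\ref{prop: basics of nhl}, part~(\ref{item:nh([x,w])}), gives $n_h\le2$ directly.

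I expect the only genuinely substantive input to be the assertion that, when the pairwise linking numbers all vanish, $\mu_{123}$ is a complete link-homotopy invariant — so that $\mu_{123}(L)=0$ forces $L$ homotopy trivial; everything else is bookkeeping once Theorem~\ref{upper bound theorem main} is available. In the alternative self-contained route the main obstacle is instead the absorption step in $\H(3)$ when $\lk(L_1,L_3)=\lk(L_2,L_3)=0$ but $\lk(L_1,L_2)\neq0$, which forces one to move the Borromean commutator $[x_{13},x_{23}]$ onto the $x_{12}$ strand and hence to establish the symmetry relation $[x_{12},x_{13}]=\pm[x_{13},x_{23}]=\pm[x_{12},x_{23}]$ in $\H(3)$ — exactly the kind of commutator identity that the $4$-component analysis of the next section will need in a more elaborate form.
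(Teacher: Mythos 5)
Your proposal is correct, but it cannot be compared to an internal proof because the paper does not prove this statement at all: it is quoted verbatim from \cite{DOP22} (Theorem 1.7 there) purely as background at the start of Section 6. What you have written is therefore an independent derivation from the paper's own machinery, and it goes through: the lower bound $n_h(L)\ge\Lambda(L)$ by tracking linking numbers is standard and correct; the relabeling so that $\lk(L_1,L_3)\neq 0$ makes $Q(L)=0$ and is exactly the trick the paper itself uses later (``after permuting the components to minimize $Q(L)$''), so Theorem~\ref{upper bound theorem main} gives $n_h(L)\le\Lambda(L)$ with no circularity, since that theorem is proved in Section 4 from Theorem~\ref{hackey idea} and Proposition~\ref{prop: basics of nhl} without any appeal to the $3$-component classification; and in the $\Lambda(L)=0$ case your exclusion of $n_h(L)=1$ (a self-crossing change preserves the class, a non-self crossing change creates nonzero linking) together with $Q(L)=1$ pins $n_h(L)\in\{0,2\}$ according to whether $\mu_{123}$ vanishes. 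The one genuinely external input is, as you say, Milnor's completeness of $\mu_{123}$ for $3$-component links with vanishing linking numbers, which is consistent with the Habegger--Lin framework the paper already invokes (concretely, a string link with all $a_{ij}=0$ lies in $\phi(RF(2))$ and equals $[x_{13},x_{12}]^{a_{123}}$, so triviality of the class forces $a_{123}=0$). Your parenthetical alternative route is also fine, though note the commutator naturally arises as $[\phi(x_1^{c}),x_{23}]$ (or $[x_{13}^c,x_{12}]$, using that such commutators are central in $\H(3)$) rather than $[\phi(\omega),x_{13}]$; either form is covered by Proposition~\ref{prop: basics of nhl}\pref{item:nh([x,w])}, and the symmetry $[x_{ij},x_{ik}]\simeq[x_{ik},x_{jk}]$ you flag is indeed established in the paper's Section 6 when it justifies Table~\ref{commutator table}.
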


In this section we turn our attention to an analogous computation for all 4-component links.  As should not be surprising, this classification is significantly more involved than for 3-component links.  We begin by recalling some elements of the classification of 4-component links up to link homotopy.  

The classification of string links up to link homotopy is provided in \cite{HL1} and is made explicit in \cite{KiMi2023} for 4-component links (another classification appears in \cite{Graff}). We state this classification restricted to 4-component links: Any $T\in \H(4)$ can be expressed uniquely as $T=A_1A_2A_3$ where 
\begin{equation}\label{eqn:4-component classification}
\begin{array}{l}
A_1 = x_{12}^{a_{12}}x_{13}^{a_{13}}x_{14}^{a_{14}}x_{23}^{a_{23}}x_{24}^{a_{24}}x_{34}^{a_{34}},
\\
A_2 = x_{123}^{a_{123}}x_{124}^{a_{124}}x_{134}^{a_{134}}x_{234}^{a_{234}},
\\
A_3 = x_{1234}^{a_{1234}}x_{1324}^{a_{1324}},
\end{array}
\end{equation}
where $x_{ijk} = [x_{ik},x_{ij}]$, $x_{1jk4}=[[x_{14},x_{1k}],x_{1j}]$ and each $a_{I}$ above is an integer.  If $L=\widehat{T}$ is the closure of $T$, then each exponent $a_I$ recovers the corresponding Milnor invariant $\bar{\mu}_I(L)$ up to sign. In particular, $a_I=\bar{\mu}_I(L)$ when $I$ has length 2 and $a_I=-\bar{\mu}_I(L)$ when $I$ has length 3 or 4.


In the following theorems, which are summarized in the introduction as Theorem \ref{thm: 4-component sampler}, $L$ is a 4-component link and $T=A_1A_2A_3$ as in \pref{eqn:4-component classification} is a 4-component string link with $\widehat{T}=L$.

\begin{theorem}\label{thm:nhl for linking number zero}
If $L$ has vanishing pairwise linking numbers, then $n_h(L)$ is given by the following table:

\noindent \begin{tabular}{|l|}\hline
    \cellcolor{black!05}$n_h(L)=0$ if and only if $a_I=0$ for every choice of $I$.\\ \hline
\cellcolor{black!05}{\begin{minipage}{.85\textwidth}$n_h(L)=2$ if and only if $L$ is not homotopy trivial and at least one condition (below) is met\end{minipage}} \\
 \makecell{\begin{minipage}{.98\linewidth}
 \begin{itemize}
 \item $a_{1324} \in (a_{123}, a_{124}) $ and  $a_{134}=a_{234}=0$,
 \item $a_{1234} \in (a_{123},a_{134})$ and $a_{124}=a_{234}=0$, 
 \item $a_{1234}+a_{1324} \in (a_{124},a_{134})$ and $a_{123}=a_{234}=0$, 
 \item $a_{1234}+a_{1324}\in (a_{123},a_{234})$ and $a_{124}=a_{134}=0$, 
\item $a_{1234} \in (a_{124},a_{234})$  and $a_{123}=a_{134}=0$,
\item $a_{1324} \in (a_{134},a_{234})$  and $a_{123}=a_{124}=0$. \end{itemize}\end{minipage}} \\\hline
\cellcolor{black!05}$n_h(L)=4$ if and only if $n_h(L)\not\in\{0,2\}$ and at least one condition (below) is met \\
\makecell{ \begin{minipage}{.98\linewidth}
 \begin{itemize}
\item  $a_{jk\ell}=0$, for some $1\le j<k<\ell\le 4$
\item  $a_{1324}\in (a_{123},a_{124}, a_{134},a_{234})$,
\item  $a_{1234}\in (a_{123},a_{124}, a_{134},a_{234})$,
\item  $a_{1234}+a_{1324}\in (a_{123},a_{124}, a_{134},a_{234})$. \end{itemize}\end{minipage}} \\\hline
\cellcolor{black!05} $n_h(L)=6$ if and only if $n_h(L)\notin\{0,2,4\}$.  \\ \hline
\end{tabular}

\end{theorem}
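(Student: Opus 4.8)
The plan is to prove Theorem~\ref{thm:nhl for linking number zero} by combining the structural results already established: the unique normal form $T = A_1A_2A_3$ of \pref{eqn:4-component classification}, the upper bounds from Proposition~\ref{prop: basics of nhl} and Corollary~\ref{cor: nhl for brunnian}, and lower bounds coming from link-homotopy invariants. Since $L$ has vanishing pairwise linking numbers, the factor $A_1$ is homotopy trivial, so $T$ lies in the image of $\phi:RF(3)\to\H(4)$ and we are reduced to a computation inside $RF(3)$, whose lower central series terminates at $RF(3)_4=1$. The overall strategy is to show each of the four ``if and only if'' rows by proving the forward implication (the stated algebraic condition forces few crossing changes) via an explicit sequence of crossing changes, and the reverse implication (few crossing changes force the algebraic condition) by showing that a short sequence of crossing changes can only produce string links whose Milnor invariants satisfy a membership-in-a-subgroup constraint.

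For the \textbf{upper bounds}, I would argue as follows. First, $n_h(L)\le 6$ always, since $L$ is Brunnian with $4$ components, by Corollary~\ref{cor: nhl for brunnian}. For the $n_h\le 4$ and $n_h\le 2$ rows, the key observation is that a crossing change between components $L_i$ and $L_j$ corresponds, in the $RF(3)\cong \pi_1$ picture of Proposition~\ref{prop: basics of nhl}, to multiplying by $\phi$ of a conjugate of $x_{k}^{\pm 1}$ or, when one analyzes the effect on the normal form more carefully, to altering the exponents $a_I$ by a controlled amount. Concretely, a single crossing change between two components that meet trivially (linking number zero) changes the relevant triple Milnor invariants by adding a multiple of a fixed generator, and a pair of crossing changes (as in the proof of Corollary~\ref{nhL for RF}, using $x_{ij}^{\alpha}[\omega,x_{ij}]$ being undone in $|\alpha|+2$ moves) can kill an entire commutator $[\omega_k,x_k]$. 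So if some $a_{jk\ell}=0$ then the corresponding sublink is already controlled and a Brunnian-type count gives $n_h\le 4$; if in addition one of the $a_{123},\dots$ vanishes and the length-$4$ invariant lies in the appropriate subgroup $(a_{\cdots},a_{\cdots})$, then two crossing changes of the form $[\omega,x_{ij}]$ suffice. Translating each bulleted algebraic condition into the corresponding explicit clasper/crossing-change sequence is the bookkeeping heart of the upper-bound direction.

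For the \textbf{lower bounds}, the tool is that $n_h(L)\ge \Lambda(L)$ always, and more refined obstructions: if $L$ is not homotopy trivial then $n_h(L)\ge 2$ (since a single crossing change on a link with vanishing linking numbers introduces a nonzero linking number, hence cannot reach a homotopy-trivial link), which handles the ``$n_h=0$'' row and the lower part of ``$n_h=2$''. For ``$n_h\ge 4$'' one shows: if $n_h(L)=2$, then $L$ is obtained from a homotopy-trivial link by two crossing changes, and by keeping track of the bidegrees of those crossings (each crossing change is between some pair $(i,j)$) one finds that the resulting $a_I$ are forced to satisfy exactly one of the six bulleted membership conditions — because two crossing changes between a single pair $(i,j)$ can only add to the two triple invariants $\mu_{ijk}$ with $k\notin\{i,j\}$ and to the length-$4$ invariants in a way that lands in the subgroup generated by those two triple invariants, while two crossing changes between two different pairs force two of the triple invariants to vanish. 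The analogous, more elaborate case analysis for $n_h(L)=4$ (four crossing changes distributed among the $\binom42=6$ pairs, with the triangle-free / Mantel-type combinatorics of Theorem~\ref{thm:min_weight_phi_n} governing which distributions are possible) yields the ``$n_h=6$'' row by exclusion.

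The \textbf{main obstacle} will be the reverse (lower-bound) implications for the $n_h=2$ and $n_h=4$ rows: one must show that \emph{every} sequence of two (respectively four) crossing changes from a homotopy-trivial link produces a string link whose normal-form exponents satisfy one of the listed conditions, and conversely that the conditions are exhaustive. This requires a careful case analysis over the multiset of component-pairs involved in the crossing changes, tracking how a crossing change between $L_i$ and $L_j$ — equivalently, conjugating a strand of the $RF(3)$ picture and inserting $x_k^{\pm1}$ — perturbs each $a_{jk\ell}$, $a_{1234}$, $a_{1324}$ modulo the subgroup generated by the triple invariants in play. I expect the cleanest route is to work entirely in $\H(4)$ using \pref{eqn:4-component classification} and Goldsmith-type commutator relations, reducing ``$L$ is $k$ crossing changes from homotopy trivial'' to ``$T$ is a product of $k$ conjugates of $x_{ij}^{\pm1}$ times a homotopy-trivial string link,'' and then reading off the constraints on $(a_I)$ from the weight filtration. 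The six symmetric bullets for $n_h=2$ should correspond exactly to the six ways of choosing an unordered pair of component-pairs (or a single pair used twice), which is a reassuring consistency check to build the proof around.
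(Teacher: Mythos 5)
Your overall route is the paper's route: express ``$T$ is undone by $k$ crossing changes'' as ``$T$ is a product of $k$ conjugates of generators $x_{ij}^{\pm1}$'' (this is Proposition~\ref{prop: algegraic translation}), note that vanishing linking numbers force the crossing changes to come in opposite-sign pairs on the same pair of components, and then read off conditions on the normal-form exponents of \pref{eqn:4-component classification}. But as written there are two incorrect supporting claims and one genuine missing step. First, vanishing pairwise linking numbers does \emph{not} put $T$ in the image of $\phi\colon RF(3)\to\H(4)$: that image is the kernel of the map deleting the fourth strand, and a string link with $a_{123}\neq 0$ (e.g.\ $T=x_{123}$) has vanishing linking numbers but a nontrivial $3$-component sublink, so the proposed ``reduction to a computation inside $RF(3)$'' fails; the computation has to be carried out in $\H(4)$ itself, as you in fact suggest later. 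Second, such an $L$ need not be Brunnian (again $x_{123}$ closed up contains Borromean rings), so Corollary~\ref{cor: nhl for brunnian} does not give $n_h(L)\le 6$; the correct source of that bound is Theorem~\ref{upper bound theorem main}, since $Q(L)=3$ when all six linking numbers vanish.

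The genuine gap is the step you defer as ``bookkeeping'': to make either direction of the $n_h=2$ and $n_h=4$ rows work, you must know \emph{exactly} which elements of $\H(4)$ are conjugates of a fixed generator $x_{ij}$, i.e.\ the analogue of Lemma~\ref{cor: nhL=1}, proved in the paper by a commutator computation against the normal form (Table~\ref{commutator table}). Without it your heuristic ``two crossing changes between $(i,j)$ perturb the length-$4$ invariants inside the subgroup generated by the relevant triple invariants'' cannot be verified, and indeed it misses the asymmetry that the classification hinges on: a cancelling pair of crossing changes between $L_1,L_2$ constrains $a_{1324}$ (because a conjugate of $x_{12}$ has $x_{1324}$-exponent $-\alpha\beta$ with $\alpha,\beta$ the $x_{123},x_{124}$-exponents), a pair between $L_1,L_3$ constrains $a_{1234}$, while pairs between $L_1,L_4$ or $L_2,L_3$ constrain the sum $a_{1234}+a_{1324}$ --- which is precisely why the six bullets take the inhomogeneous forms they do. For $n_h=4$ one further needs the solvability analysis of the resulting exponent equations (e.g.\ that four changes split between $(1,2)$ and $(3,4)$ realize exactly $a_{1324}\in(a_{123},a_{124},a_{134},a_{234})$), and for $n_h=2$ the exhaustiveness claim that both changes must be on a single pair (your parenthetical ``an unordered pair of component-pairs'' is not an option here, since every pair of components must receive an even number of changes). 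Also note that the Mantel-type graph argument you invoke for the $n_h=4$ case plays no role at this stage; it is only needed later for many-component lower bounds. So the plan is aimed in the right direction, but the conjugacy classification and the explicit case-by-case expansion it feeds are the proof, not an afterthought, and the two reductions you lean on ($RF(3)$ and Brunnian-ness) are not available.
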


Assume now that $L$ is a link with at least one non-vanishing pairwise linking number. It is demonstrated in \cite{DOP22} that if $L$ is a 3-component link, then $n_h(L)=\Lambda(L)$.  The same is not true for 4-component links, but the reader should note that once the pairwise linking numbers get sufficiently complicated, they determine the homotopy trivializing number. In order to cut down on the number of cases needed we will assume (at the cost of possibly permuting the components of $L$ and changing the orientation of some components) that $\lk(L_1,L_2)\ge |\lk(L_i,L_j)|$ for all $i\neq j$.  With that convention established, Theorems~\ref{thm:linking greater one} and \ref{thm:nonvanishing linking} complete our classification of the homotopy trivializing number.

\begin{theorem}\label{thm:linking greater one}
  If $\lk(L_1,L_2)>0$ and all other pairwise linking numbers vanish, then $n_h(L)$ is determined by the following table:

\noindent  \begin{tabular}{|l|}\hline
\centerline{ Suppose $\lk(T_1,T_2)=1$.}\\ \hline 
\cellcolor{black!05}{$n_h(L)=1$ if and only if $a_{134}=a_{234}=0$ and $a_{1324}=-a_{123}a_{124}$.}\\ \hline
\cellcolor{black!05}$n_h(L)=3$ if and only if $n_h(L)\not=1$ and at least one condition is met: \\
 \begin{minipage}{.98\linewidth}
 \begin{itemize}
 \item $a_{134}=0 $ or $a_{234}=0$,
 \item $a_{1324} + a_{123}a_{124} \in (a_{134}, a_{234})$.  
 \end{itemize}\end{minipage}
 \\\hline
\cellcolor{black!05}$n_h(L)=5$ if and only if none of the above conditions are met: \\
 \begin{minipage}{.98\linewidth}
 In other words, $n_h(L)=5$ if and only if $a_{134}\cdot a_{234}\not=0$ and $a_{1324}+a_{134}\cdot a_{123} \not\in (a_{134},a_{234})$.  
 \end{minipage}
 \\\hline \hline 
 \centerline{Suppose $\lk(T_1,T_2)=2$.}\\ \hline 
 \cellcolor{black!05}$n_h(L)=2$ if and only if $a_{134}=a_{234}=0$  and at least one condition is met: \\
 \begin{minipage}{.98\linewidth}
 \begin{itemize}
 \item  at least one of $a_{123}$ or $a_{124}$ is odd,
 \item $a_{1324}$ is even. 
 \end{itemize}\end{minipage}
 \\\hline 
  \cellcolor{black!05}$n_h(L)=4$ if and only if $n_h(L)\not=2$ and at least one condition is met: \\
 {\begin{minipage}{.98\linewidth}
 \begin{itemize}
 \item $a_{134}=0$ or  $a_{234}=0$,
 \item at least one of $a_{123},a_{134},a_{124}, a_{234}$ is odd or $a_{1324}$ is even. 
 \end{itemize}\end{minipage}} \\\hline
\cellcolor{black!05}$n_h(L)=6$ if and only if none of the above conditions are met.   \\
\hline \hline
  \centerline{Suppose $\lk(T_1,T_2)\geq 3$.}\\ \hline 
\cellcolor{black!05}
$n_h(L)\in \{\Lambda(L), \Lambda(L)+2\}$, and $n_h(L)=\Lambda(L)$ if and only if $a_{134}=a_{234}=0$. \\ \hline
\end{tabular}
\end{theorem}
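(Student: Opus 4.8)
The plan is to translate the computation of $n_h(L)$ into a combinatorial problem inside the group $\H(4)$ and to resolve it by a structured case analysis on the normal form \pref{eqn:4-component classification}. We use that $n_h(L)=n_h(T)$ for any string link $T$ with $\widehat T=L$ (proved earlier), together with the fact that a crossing change between the $i$-th and $j$-th strands of a string link representing $g\in\H(n)$ produces one representing $g\cdot w\,x_{ij}^{\pm1}\,w^{-1}$ for some $w\in\H(n)$, every such element being realized. Thus $n_h(T)$ is the least $k$ for which $T$ can be written as a product of $k$ conjugates of generators $x_{ij}^{\pm1}$. Under our hypothesis every $a_{ij}$ vanishes except $a_{12}=\lk(L_1,L_2)=\Lambda(L)$, so $T=x_{12}^{a_{12}}A_2A_3$ with $A_2=x_{123}^{a_{123}}x_{124}^{a_{124}}x_{134}^{a_{134}}x_{234}^{a_{234}}$ and $A_3=x_{1234}^{a_{1234}}x_{1324}^{a_{1324}}$, and the content of the theorem is that the minimal $k$ is governed by $a_{12}$, the four triple exponents, and the single weight-four exponent $a_{1324}$ (the exponent $a_{1234}$ drops out because $\overline{\mu}_{1234}$ is only defined modulo $a_{12}$).

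For the lower bounds I would use Milnor invariants as obstructions, tracking how multiplication by a conjugate of $x_{ij}^{\pm1}$ affects them. Since $\lk(L_i,L_j)$ is additive under stacking and a crossing change changes only the linking number of the two components it involves (by $\pm1$), at least $a_{12}$ crossing changes lie between components $1$ and $2$, giving $n_h(L)\ge\Lambda(L)$; moreover a net change of zero in any pairwise linking number requires an even number of crossing changes between that pair, so $n_h(L)-\Lambda(L)$ is always even, which is why only $\Lambda$, $\Lambda+2$, $\Lambda+4$ occur. Next, $\overline{\mu}_{134}$ and $\overline{\mu}_{234}$ are well-defined integers here, and a crossing change alters $\overline{\mu}_{ijk}$ only when the pair it touches lies in $\{i,j,k\}$; hence $a_{134}\ne0$ forces a (linking-number-canceling, hence doubled) crossing change among $\{1,3\},\{1,4\},\{3,4\}$ and $a_{234}\ne0$ one among $\{2,3\},\{2,4\},\{3,4\}$, producing the jumps to $\Lambda+2$ and $\Lambda+4$. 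The remaining, subtler obstructions are (i) the parity conditions in the $\lk(T_1,T_2)=2$ case, extracted by reducing the normal form and the relevant invariants modulo $2$, and (ii) the quadratic condition $a_{1324}=-a_{123}a_{124}$ (respectively $a_{1324}+a_{123}a_{124}\in(a_{134},a_{234})$), which comes from expanding a conjugate $w\,x_{12}\,w^{-1}$ with the commutator product formulas of Proposition~\ref{prop: commutators}: the weight-three correction term $[[x_{12},x_{13}],x_{14}]$ forces the achievable value of $a_{1324}$ to agree with $-a_{123}a_{124}$ modulo the ideal generated by whatever pairs of strands one is willing to touch, i.e.\ $(a_{134},a_{234})$, and additionally $(a_{12})$ once $a_{12}$ is large.

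For the upper bounds I would produce explicit crossing-change sequences from Proposition~\ref{prop: basics of nhl} ($n_h(x_{ij})=1$, $n_h(T*S)\le n_h(T)+n_h(S)$, and $n_h([T,x_{ij}])\le2$), combined with the commutator identities of Propositions~\ref{prop inverses in RF} and \ref{prop:rearrage}. When the cheapest conditions hold one rewrites $x_{12}^{a_{12}}A_2A_3$ as a single conjugate $w\,x_{12}^{a_{12}}\,w^{-1}$, with $w$ built from powers of $x_{13}$ and $x_{14}$, checking via those identities that the conjugation reproduces exactly $A_2A_3$, so $n_h=\Lambda$. When only the triple-invariant conditions fail, one spends two extra crossing changes on the pair $\{3,4\}$, chosen so as to kill $a_{134}$ and $a_{234}$ and simultaneously repair the weight-four exponent, giving $n_h=\Lambda+2$; when even that is obstructed a generic four-extra-change procedure works, giving $\Lambda+4$ (respectively $\Lambda+2$ once $a_{12}\ge3$, where the larger ideal $(a_{12})$ always leaves enough room). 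Each case is a finite computation in $\H(4)$, terminating because $RF(3)_4=1$.

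The \emph{main obstacle} is matching the lower and upper bounds at the level of the weight-four invariant: determining, for each tuple $(a_{12},a_{123},a_{124},a_{134},a_{234})$, exactly which integers $a_{1324}$ are hit by a product of a prescribed number of conjugates of the $x_{ij}^{\pm1}$. This is a bilinear Diophantine question in which both the quadratic term $a_{123}a_{124}$ and the ideal $(a_{12},a_{134},a_{234})$ intervene, and keeping the commutator expansions honest modulo $RF(3)_4$ while respecting the indeterminacy of $\overline{\mu}_{1234}$ is where most of the bookkeeping lives; the mod-$2$ analysis powering the $\lk(T_1,T_2)=2$ case is a second, largely independent, delicate point.
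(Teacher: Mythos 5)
Your reduction of $n_h$ to a word--length problem --- $n_h(T)$ is the least $k$ such that $T$ is a product of $k$ conjugates of the generators $x_{ij}^{\pm1}$ --- together with the parity observation $n_h(L)\equiv\Lambda(L)\pmod 2$ is exactly the paper's starting point (Proposition~\ref{prop: algegraic translation}). But your proposal stops short of the actual content of the theorem. Everything hinges on knowing \emph{precisely} which elements of $\H(4)$ are conjugates of a fixed generator, and hence which normal forms \pref{eqn:4-component classification} arise as products of a prescribed number of such conjugates; in the paper this is Lemma~\ref{cor: nhL=1} (proved via Table~\ref{commutator table}), and the theorem then follows by multiplying those explicit normal forms and solving the resulting exponent equations, including the parity manipulations such as rewriting $2a_{1324}+a_{123}a_{124}=-(2\alpha-a_{124})(2\beta-a_{123})$ in the $\lk(L_1,L_2)=2$ case. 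You explicitly defer exactly this step (your ``main obstacle''), but the table in the statement of the theorem \emph{is} the answer to that Diophantine question, and nothing in the sketch derives it.

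Moreover, two of the shortcuts you do sketch would fail. First, the Milnor-invariant lower bound as stated --- that $a_{134}\neq0$ and $a_{234}\neq0$ ``produce the jumps to $\Lambda+2$ and $\Lambda+4$'' --- is contradicted by the theorem itself: two crossing changes between $L_3$ and $L_4$ affect both $\overline\mu_{134}$ and $\overline\mu_{234}$, and when $\lk(L_1,L_2)=1$, $a_{134}a_{234}\neq0$, and $a_{1324}+a_{123}a_{124}\in(a_{134},a_{234})$, one has $n_h(L)=3$, not $5$. The obstruction separating $\Lambda+2$ from $\Lambda+4$ is the realizability of $a_{1324}$, which only the conjugacy analysis of Lemma~\ref{cor: nhL=1} detects. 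Second, your upper-bound device of writing $x_{12}^{a_{12}}A_2A_3$ as a single conjugate $w\,x_{12}^{a_{12}}\,w^{-1}$ cannot work once $a_{12}\geq 2$: a single conjugate of $x_{12}^2$ equals $(w\,x_{12}\,w^{-1})^2$ and therefore has even exponents on $x_{123}$ and $x_{124}$, whereas the theorem asserts $n_h(L)=2$, for instance, whenever $a_{134}=a_{234}=0$ and $a_{123}$ is odd; one must use products of \emph{distinct} conjugates, which again requires the explicit description of all conjugates. Finally, the outer bounds $n_h(L)\le\Lambda(L)+4$ (and $\le\Lambda(L)+2$ when $\lk(L_1,L_2)\ge3$) are not ``generic'': the paper obtains the former from Theorem~\ref{upper bound theorem main} after permuting components to make $Q(L)=2$, and the latter from an explicit factorization splitting off $x_{134}^{a_{134}}x_{234}^{a_{234}}$.
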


The reader will notice that as $\lk(L_1,L_2)$ grows, the effect of the higher order Milnor invariants shrinks. This phenomenon persists for links with multiple nonvanishing pairwise linking numbers.    

\begin{theorem}\label{thm:nonvanishing linking}
If $L$ has at least two nonvanishing pairwise linking numbers, $\lk(L_1,L_2)>0$, and $\lk(L_1,L_2)\ge |\lk(L_i,L_j)|$ for all $i,j$, then $n_h(L)$ is given by the following table:

\noindent \begin{tabular}{|l|}\hline
\centerline{ Suppose $\lk(T_1,T_2)=1$, $\lk(T_3,T_4)=1$, and $\lk(T_i,T_j)=0$ for all other $i,j$.}\\ \hline 
\cellcolor{black!05}{$n_h(L)\in \{2,4\}$, and  $n_h(L)=2$ if and only if $a_{1324}=-a_{123}a_{124}-a_{134}a_{234}$. } \\ \hline \hline
\centerline{Suppose $\lk(T_1,T_2)=2$, $\lk(T_3,T_4)=1$, and $\lk(T_i,T_j)=0$ for all other $i,j$.}\\ \hline 
\cellcolor{black!05}{\begin{minipage}{.85\textwidth}$n_h(L)\in \{3,5\}$, and furthermore $n_h(L)=3$ if and only if at least one condition (below) is met\end{minipage}}\\
\begin{minipage}{.9\linewidth}
\begin{itemize}
    \item $a_{1324}+a_{134}a_{234}$ is even,
    \item 
    either of $a_{123}$ or $a_{124}$ is odd
\end{itemize} 
\end{minipage}\\ \hline \hline
\centerline{Suppose $\lk(T_1,T_2)=2$, $\lk(T_3,T_4)=2$, and $\lk(T_i,T_j)=0$ for all other $i,j$.}\\ \hline 
\cellcolor{black!05}{$n_h(L)\in \{\Lambda(L),\Lambda(L)+2\}$ and  $n_h(L)=\Lambda(L)$ if and only if at least one condition (below) is met}\\
\begin{minipage}{.9\linewidth}
\begin{itemize}
    \item at least one of $a_{123},a_{124},a_{134}, a_{234}$ are odd,
    \item $a_{1324}$ is even
\end{itemize} 
\end{minipage}\\ \hline \hline
\centerline{Suppose $\lk(T_1,T_2)\geq 3$, $\lk(T_3,T_4)\geq1$, and $\lk(T_i,T_j)=0$ for all other $i,j$.}\\ \hline 
\cellcolor{black!05}{$n_h(L)=\Lambda(L)$.}\\
 \hline \hline
 \centerline{ Suppose $\lk(T_1,T_2)\not=0$, $\lk(T_1,T_3)\not=0$, and $\lk(T_i,T_j)=0$ for all other $i,j$.}\\ \hline 
\cellcolor{black!05}{$n_h(L)\in \{\Lambda(L),\Lambda(L)+2\}$, and $n_h(L)=\Lambda(L)$ if and only if $a_{234}=0$. } \\ \hline \hline
\centerline{Suppose $\lk(T_1,T_2)\lk(T_2,T_3)\lk(T_3,T_4)\not=0$ or $\lk(T_1,T_2)\lk(T_1,T_3)\lk(T_2,T_3)\not=0$.}\\ \hline  
\cellcolor{black!05}{$n_h(L)= \Lambda(L)$.}\\ \hline \hline
\centerline{Suppose $\lk(T_1,T_2), \lk(T_1,T_3), \lk(T_1,T_4)$ are nonzero and  $\lk(T_i,T_j)=0$ for all other $i,j$.}\\ \hline 
\cellcolor{black!05}{$n_h(L) \in\{ \Lambda(L),\Lambda(L) +2$ \} and $n_h(L)= \Lambda(L)$  if and only if $a_{234}=0$.}\\
\hline 
\end{tabular}
\end{theorem}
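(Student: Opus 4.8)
The plan is to reduce the theorem to a finite computation inside the group $\mathcal H(4)$ and then, in each of the configurations of nonvanishing pairwise linking numbers listed in the table, to prove matching upper and lower bounds for $n_h$.

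\emph{Reduction.} By Theorem~\ref{thm: homotopy invariance} and the equality $n_h(\widehat T)=n_h(T)$, it suffices to compute $n_h(T)$ for $T\in\mathcal H(4)$, and by the Habegger--Lin classification every such $T$ is uniquely $T=A_1A_2A_3$ as in \eqref{eqn:4-component classification}; thus $n_h(T)$ is a function of the integer tuple $(a_{ij};a_{ijk};a_{1234},a_{1324})$. After permuting components and reversing orientations we may assume $\lk(L_1,L_2)=a_{12}\ge|a_{ij}|$ for all $i,j$. The rows of the table list precisely the resulting configurations with at least two nonzero $a_{ij}$, so it remains to evaluate $n_h$ in each.

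\emph{Upper bounds.} In each case I would exhibit an explicit crossing-change sequence and count its length; a single crossing change between $L_i$ and $L_j$ multiplies $T$ by $x_{ij}^{\pm1}$. The engine is Proposition~\ref{prop: basics of nhl} together with the normal forms of Section~\ref{sect:bounding homotopy trivialing number}: I would repeatedly use the identity behind Corollary~\ref{nhL for RF}, namely that a factor $x_{ij}^{a}[\omega,x_{ij}]$ is undone by $|a|$ crossing changes between $L_i$ and $L_j$ when $a\ne0$ and by $2$ when $a=0$, while $[T,x_{ij}]$ costs at most $2$ in general. When $a_{12}\ge2$ there is extra room: reducing $x_{12}^{a_{12}}$ to $x_{12}$ en route, the length-$3$ terms $[x_{1k},x_{12}]^{a_{12k}}$ and the length-$4$ terms $[[x_{14},x_{1k}],x_{12}]$ can be absorbed by further crossing changes between $L_1$ and $L_2$, at a cost governed exactly by the parity conditions on the $a_I$ appearing in the table; when a second linking number is nonzero one likewise absorbs the commutators built from $x_{34}$ using crossing changes between $L_3$ and $L_4$. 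Each bullet of the table corresponds to one such cancellation scheme.

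\emph{Lower bounds.} Fix any sequence of crossing changes reducing $L$ to a homotopy trivial link, let $w_{ij}$ count those performed between $L_i$ and $L_j$, and form the weighted graph on $\{v_1,\dots,v_4\}$. Three families of constraints force $\sum_{i<j}w_{ij}$ up to the claimed value. (1) The final link has vanishing pairwise linking numbers, so $w_{ij}\ge|a_{ij}|$ and $w_{ij}\equiv a_{ij}\pmod2$. (2) Crossing changes involving $L_\ell$, or self-crossings of $L_\ell$, do not change the sublink $L_i\cup L_j\cup L_k$, so the $w_{ij}+w_{jk}+w_{ik}$ changes among $\{i,j,k\}$ must already homotopy-trivialize that sublink, whence $w_{ij}+w_{jk}+w_{ik}\ge n_h(L_i\cup L_j\cup L_k)$, a number computed from $\Lambda$ and the triple numbers by the Davis--Orson--Park $3$-component formula. (3) A length-$4$ obstruction: tracking how a crossing change between $L_i$ and $L_j$ shifts $\bar\mu_{1234}(L)$ and $\bar\mu_{1324}(L)$ modulo the $\gcd$ of the relevant triple linking numbers shows that, unless the residue of $a_{1234}$, $a_{1324}$, or $a_{1234}+a_{1324}$ already lies in the subgroup generated by those triple numbers---equivalently, unless the displayed identities such as $a_{1324}=-a_{123}a_{124}-a_{134}a_{234}$ hold---at least one further crossing change is forced. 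Minimizing $\sum_{i<j}w_{ij}$ under (1)--(3) in each row yields the lower bound matching the upper bound, hence the stated value of $n_h(L)$.

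\emph{Main obstacle.} The delicate part is constraint (3): pinning down the exact effect of a crossing change between two fixed components on the length-$4$ Milnor invariants, which are only defined modulo the $\gcd$ of the triple linking numbers and whose shift involves products of lower Milnor numbers (the source of the quadratic terms $-a_{123}a_{124}$, $-a_{134}a_{234}$ in the table). This requires careful commutator calculus in $RF(3)$ and $\mathcal H(4)$ via Propositions~\ref{prop: RF commute}, \ref{prop inverses in RF} and~\ref{prop:rearrage}; the rest---checking that the parity and magnitude bookkeeping of (1)--(2) combines with (3) to determine $n_h$ in every row---is lengthy but routine casework.
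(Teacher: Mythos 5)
There is a genuine gap, and it sits exactly where you flag the ``delicate part'': your lower-bound constraint (3) cannot work as an invariant-counting argument in the setting of this theorem. Every row here has some pairwise linking number equal to $1$ or $2$, so the closed-link Milnor invariants of length $\ge 3$ are only well defined modulo a $\GCD$ that includes those linking numbers; in the first row ($\lk(L_1,L_2)=\lk(L_3,L_4)=1$) all of $\overline\mu_{ijk}(L)$, $\overline\mu_{1234}(L)$, $\overline\mu_{1324}(L)$ are defined mod $1$ and carry no information. Hence ``tracking how a crossing change shifts $\overline\mu_{1234}$ and $\overline\mu_{1324}$ modulo the gcd of the triple linking numbers'' detects nothing in these cases. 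Moreover your claimed equivalence between ``the residue lies in the subgroup generated by the triple numbers'' and the displayed identities is false: the former is a linear (ideal-membership) condition, while the table's conditions are exact quadratic identities in the string-link exponents. Concretely, take $a_{123}=a_{124}=a_{134}=a_{234}=1$ in the first row; the subgroup they generate is all of $\Z$, so your constraints (1)--(3) give only the lower bound $2$, yet the theorem asserts $n_h(L)=4$ whenever $a_{1324}\neq -2$. So the minimization over $w_{ij}$ subject to (1)--(3) does not reproduce the table.

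What is actually needed (and what the paper does) is to convert both bounds into an exact factorization problem in $\H(4)$: by Proposition~\ref{prop: algegraic translation}, $L$ is undone by a prescribed multiset of crossing changes if and only if $T$ is the product of the corresponding conjugates of the $x_{ij}^{\pm1}$, and Lemma~\ref{cor: nhL=1} describes all conjugates of each generator explicitly, e.g.\ $V^{-1}x_{12}V=x_{12}x_{123}^{\alpha}x_{124}^{\beta}x_{1234}^{\gamma}x_{1324}^{-\alpha\beta}$. The coupled exponent $-\alpha\beta$ (not any Milnor residue) is the source of the quadratic condition $a_{1324}=-a_{123}a_{124}-a_{134}a_{234}$ and, via equations like $a_{1324}+a_{123}a_{124}=-2\alpha\beta+a_{123}\beta+a_{124}\alpha$, of the parity conditions in the $\lk=2$ rows; non-solvability of these Diophantine systems is what forces the lower bounds, while solvability (together with Theorem~\ref{thm:linking greater one} for factors with a single nonvanishing linking number and Theorem~\ref{upper bound theorem main} for the generic $\Lambda(L)+2$ bound) gives the upper bounds. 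Your upper-bound sketch via ``absorbing commutators'' also never derives these precise conditions, but that could be repaired by the same computation; the lower-bound mechanism, as proposed, cannot be.
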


The remainder of the section is organized as follows.  In Subsection~\ref{subsect: nhL=1} we express the homotopy trivializing number in terms of a word length problem in $\H(n)$.  We close this subsection by determining which elements of $\H(n)$ have $n_h(T)=1$.  In Subsection~\ref{subsect: linking equals zero} we compute the homotopy trivializing numbers when pairwise linking numbers vanish, proving Theorem~\ref{thm:nhl for linking number zero}.  In subsections~\ref{subsect: one nonvanishing linking} and \ref{subsect: many nonvanishing linking} respectively we complete the section with the proofs of Theorem \ref{thm:linking greater one} and Theorem \ref{thm:nonvanishing linking}.

\subsection{The homotopy trivializing number as a word length.}\label{subsect: nhL=1}

The braids $x_{ij}$ with $1\le i<j\le n$ generate $\H(n)$.  Thus, they also normally generate $\H(n)$.  The following proposition reveals that that $n_h(T)$ is given by counting how many conjugates of the $x_{ij}$ must be multiplied together to get $T$.  

\begin{proposition}\label{prop: algegraic translation}
Consider any $T = T_1 \cup \cdots \cup T_n \in \H(n)$.
$T$ can be undone by a sequence of crossing changes consisting of changing $p_{ij}$ positive crossings and  $n_{ij}$ negative crossings in between $T_i$ and $T_j$ for each $1\le i < j\le n$ if and only if 
$$T=\prod_{k=1}^m W_k^{-1} x_{i_k, j_k}^{\epsilon_{k}} W_k$$\
where $m=\Sum_{i,j}p_{ij}+n_{ij}$, 
each $W_k\in \H(n)$, $\epsilon_{k}\in \{\pm 1\}$, and for each $i$ and $j$ there are a total of $p_{ij}$ (or $n_{ij}$ resp.) values of $k$ with $i_k=i$, $j_k=j$ and $\epsilon_{k}=+1$ ($\epsilon_k=-1$ resp.). 
\end{proposition}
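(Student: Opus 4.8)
The plan is to reduce Proposition~\ref{prop: algegraic translation} to a purely local statement and then telescope. The local statement I would isolate is: \emph{two string links $S,S'\in\H(n)$ are related by a single crossing change at a crossing between a strand of the $i$-th and a strand of the $j$-th component if and only if $S = W^{-1}x_{ij}^{\varepsilon}W\cdot S'$ in $\H(n)$ for some $W\in\H(n)$, where $\varepsilon=+1$ exactly when the changed crossing is positive} (under the fixed convention identifying the element $x_{ij}$ with a positive clasp between components $i$ and $j$). In words: a crossing change is interchangeable with left multiplication by a conjugate of $x_{ij}^{\pm1}$, the sign of the crossing recording the exponent. Everything below is understood modulo link homotopy, so that a factor of the form $W^{-1}\ast W$ may be erased.

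Granting the local statement, the proposition follows by telescoping in both directions. For ``only if'', fix a diagram of $T$ and a set of crossings whose simultaneous change yields a homotopy trivial string link, with $p_{ij}$ of them positive and $n_{ij}$ negative between components $i$ and $j$. Changing these crossings one at a time in the fixed diagram -- never isotoping, so the sign of a not-yet-changed crossing is unaffected -- produces a chain $T=T_0,T_1,\dots,T_m$ with $T_m$ homotopy trivial, i.e.\ $T_m=e$ in $\H(n)$; the local statement gives $T_{l-1}=c_l\,T_l$ with $c_l=W_l^{-1}x_{i_l,j_l}^{\varepsilon_l}W_l$, where $(i_l,j_l)$ is the pair at the $l$-th changed crossing and $\varepsilon_l=+1$ precisely when that crossing was positive. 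Telescoping, $T=T_0=c_1c_2\cdots c_mT_m=\prod_{k=1}^m W_k^{-1}x_{i_k,j_k}^{\varepsilon_k}W_k$, and by construction exactly $p_{ij}$ of the indices $k$ have $(i_k,j_k)=(i,j)$ with $\varepsilon_k=+1$ and exactly $n_{ij}$ have $\varepsilon_k=-1$. For ``if'', given such a product, realize it as a vertical stack of diagrams (Figure~\ref{fig: stacking}); the $x_{i_k,j_k}^{\varepsilon_k}$ factor of the $k$-th block is a single clasp whose two crossings both have the sign attached to $\varepsilon_k$, and changing one crossing in each of the $m$ clasps replaces the $k$-th block by $W_k^{-1}\ast(\text{trivial})\ast W_k$, so the resulting string link is link homotopic to $\prod_k W_k^{-1}W_k=e$; this exhibits $T$ as undone by a sequence of crossing changes with exactly $p_{ij}$ positive and $n_{ij}$ negative crossings between components $i$ and $j$.

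It remains to prove the local statement, which is the crux. The ``if'' direction is the clasp argument used above: change one crossing of the clasp inside the $x_{ij}^{\varepsilon}$ factor of $W^{-1}x_{ij}^{\varepsilon}W\cdot S'$, reducing that string link to $W^{-1}W\,S'=S'$, and reverse the move to pass from $S'$ to $S$. For the ``only if'' direction I would use a crossing-change disk $\Delta\subset D^2\times[0,1]$ meeting $S$ transversely in two points, one on $S_i$ and one on $S_j$, along which a $\pm1$ Dehn twist realizes the prescribed change; sliding the two punctures of $\Delta$ down along $S_i$ and $S_j$ to the bottom endpoints $p_i,p_j$ drags $\Delta$ to a small disk encircling those two strands near the bottom, the slide records a string link $W\in\H(n)$ (the way the two strands thread through the rest of $S$), and the twisted-in full twist becomes the factor $W^{-1}x_{ij}^{\pm1}W$ inserted at the bottom of the cylinder, the exponent being read off from the sign of the original crossing via $\sigma^{-1}=\sigma\cdot\sigma^{-2}$ on the two strands in question. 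Equivalently one may argue through Habiro's clasper calculus, since a crossing change is $C_1$-clasper surgery and such a clasper can be slid so its leaves lie at the bottom of the cylinder (compare Figure~\ref{fig:ClasperSurgery}). The main obstacle is precisely this ``only if'' step: trading an a priori interior modification of a string link for left multiplication by a conjugate of a generator, with the conjugating element $W$ genuinely in $\H(n)$ and with $\varepsilon$ tracking the crossing sign. The geometric slide is routine but needs care, and the sign bookkeeping amounts to fixing once and for all the convention identifying a positive crossing change between components $i$ and $j$ with $x_{ij}^{+1}$, after which the counting of the $p_{ij}$ and $n_{ij}$ is automatic.
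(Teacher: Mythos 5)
Your proposal is correct and follows essentially the same route as the paper: your ``local statement'' is exactly the paper's $m=1$ case (a string link undone by one crossing change is a conjugate of $x_{ij}^{\pm1}$, shown there by sliding the $C_1$-clasper realizing the crossing change into standard position, as in Figure~\ref{fig: Crossing change to conjugate}), and your telescoping is the paper's induction on the number of crossing changes, with sufficiency likewise handled by changing one crossing in each clasp factor.
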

\begin{proof}
Sufficiency is obvious.  In order to see the converse, we begin with an argument in the case $m=1$.  For an example, in Figure~\ref{fig: Crossing change to conjugate A} we see a link produced by changing the trivial string link by a single positive crossing change and in Figure~\ref{fig: Crossing change to conjugate C} we see that after a link homotopy, it is a conjugate of $x_{ij}$. While some steps in the homotopy are provided, we encourage the reader to convince themselves that if they perform the clasper surgery described in \ref{fig: Crossing change to conjugate B} and in \ref{fig: Crossing change to conjugate C} then they will see two 4-component links whose first three components form the unlink, whose complement has fundamental group free of rank 3, and whose fourth components represent the same element in this free group, as this is the philosophy of the proof that follows.

If a string link $T$ can be undone by a single crossing change, then $T$ is the result of surgery on a single $C_1$-tree on the unlink.  This tree consists of a pair of disks intersecting, say, the $i$'th and $j$'th components, each in a single point along with an arc $\alpha$ between them.  

Let  $T^i$ be the sublink of $T$ obtained by deleting the component $T_i$.  After performing this surgery, $T^i$ is the unlink, and  the class of $T$ in $\H(n)$ depends only on the homotopy type of $T_i$ in the exterior of this unlink.  In the exterior of $T^i$, $T_i$ follows the arc $\alpha$, wraps once around the meridian of $T_j$ (or the reverse of this meridian), and then follows $\alpha^{-1}$.  Thus, up to link homotopy, $T$ agrees with $\beta x_{ij} \beta^{-1}$ (or $\beta x_{ij}^{-1}\beta$) where $\beta$ is the braid whose $i$'th component follows $\alpha$ as it winds about $T^i$.  In conclusion, $T$ is a conjugate of $x_{ij}$ (or $x_{ij}^{-1}$), as claimed.   
%

Now proceed inductively.  If $T$ can be reduced to the trivial string link by $m+1$ crossing changes, then by performing one of these crossing changes and appealing to induction, we get a new string link $S=\prod_{k=1}^m W_k^{-1} x_{i_k, j_k}^{\epsilon_{k}} W_k$.  As $T$ and $S$ differ by a single crossing change, $S^{-1}T$ can be undone by a single crossing change, so that $S^{-1}T = W_{m+1}^{-1}x_{i_{m+1}j_{m+1}}^{\epsilon_{m+1}}W_{m+1}$.  The result follows.   
\begin{figure}
     \centering
     \begin{subfigure}[b]{0.3\textwidth}
         \centering
         \begin{tikzpicture}
         \node[rotate=180] at (0,0){\includegraphics[height=.1\textheight]{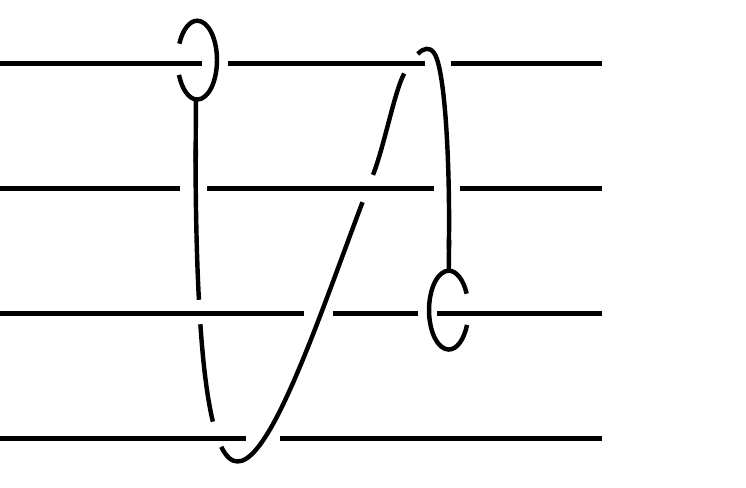}};
         \end{tikzpicture}
         \caption{}
         \label{fig: Crossing change to conjugate A}
         
     \end{subfigure}
     \begin{subfigure}[b]{0.3\textwidth}
     \centering
         \begin{tikzpicture}
         \node[rotate=180] at (0,0){\includegraphics[height=.1\textheight]{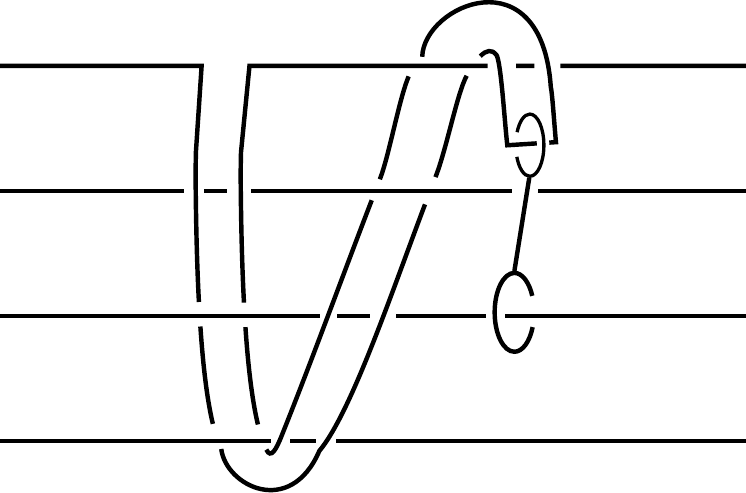}};
         \end{tikzpicture}
         \caption{}
         \label{fig: Crossing change to conjugate B}
    \end{subfigure}
     \begin{subfigure}[b]{0.3\textwidth}
     \centering
         \begin{tikzpicture}
         \node[rotate=180] at (0,0){\includegraphics[height=.1\textheight]{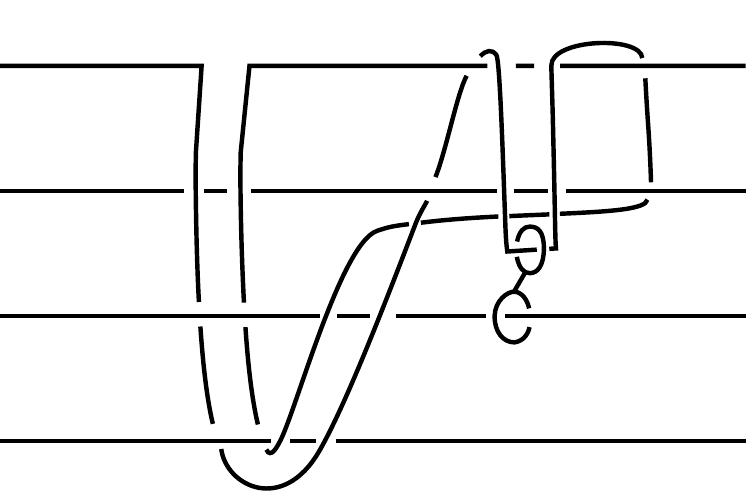}};
         \end{tikzpicture}
         \caption{}
         \label{fig: Crossing change to conjugate BC}
         
     \end{subfigure}
     \\
     \begin{subfigure}[b]{0.3\textwidth}
         \centering
         \begin{tikzpicture}
         \node[rotate=180] at (0,0){\includegraphics[height=.1\textheight]{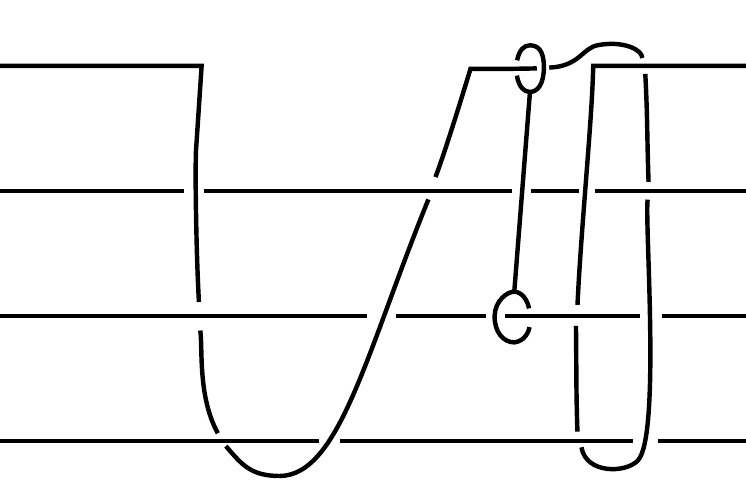}};;
         \end{tikzpicture}
         \caption{}
         \label{fig: Crossing change to conjugate BCCC}
         
     \end{subfigure}
     \begin{subfigure}[b]{0.3\textwidth}
         \centering
         \begin{tikzpicture}
         \node[rotate=180] at (0,0){\includegraphics[height=.1\textheight]{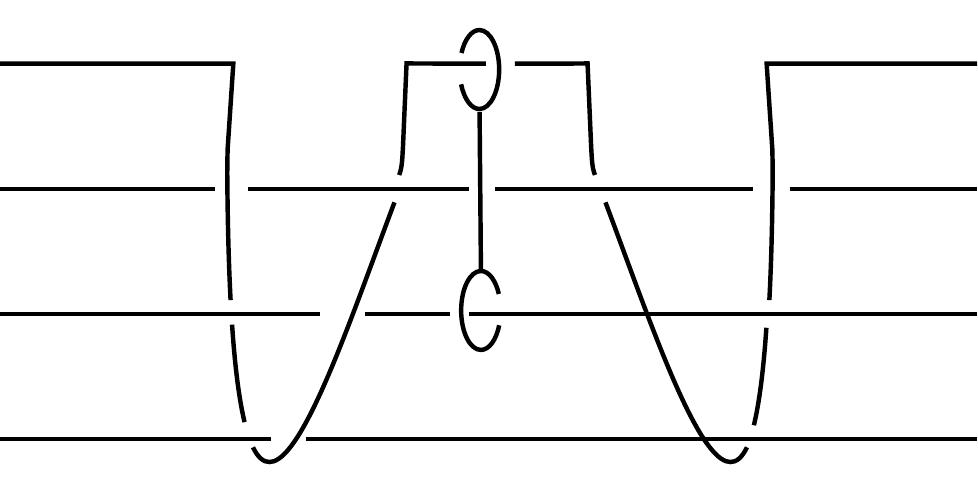}};;
         \end{tikzpicture}
         \caption{}
         \label{fig: Crossing change to conjugate C}
         
     \end{subfigure}
     
        \caption{\pref{fig: Crossing change to conjugate A} A $C_1$-tree that changes the trivial string link by a single positive crossing change.  \pref{fig: Crossing change to conjugate B} The same diagram after an isotopy.  \pref{fig: Crossing change to conjugate BC}-\pref{fig: Crossing change to conjugate C} A link homotopy reducing this diagram to a conjugate of $x_{13}$.  }
        \label{fig: Crossing change to conjugate}
\end{figure}
\end{proof}

The next lemma reveals that the terms in the product in Proposition~\ref{prop: algegraic translation}  commute at a cost of changing the conjugating elements $W_k$.  The proof amounts to expanding out both sides.
\begin{lemma}\label{lem: reorder cross change}
For any group $G$ and any $W,V,x, y\in G$, 
$$W^{-1}x W V^{-1}yV = V^{-1}y V (W')^{-1}x (W'),$$
where $W'=W V^{-1}yV$.
\end{lemma}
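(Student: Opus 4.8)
The plan is to prove this by a direct expansion of both sides, using the elementary fact that a product of two conjugates can be reordered at the cost of conjugating one factor by the other. First I would abbreviate $c = V^{-1} y V$, so that the left-hand side reads $(W^{-1} x W)\, c$. For any elements $a, c$ of a group one has the tautology $ac = c\,(c^{-1} a c)$; applying this with $a = W^{-1} x W$ gives
$$W^{-1} x W\, V^{-1} y V \;=\; V^{-1} y V\, \big( (V^{-1} y V)^{-1}\, W^{-1} x W\, (V^{-1} y V) \big).$$

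Next I would simplify the bracketed term: conjugating the conjugate $W^{-1} x W$ by $c$ simply prepends $W$ to the conjugator, i.e. $(V^{-1} y V)^{-1} W^{-1} x W (V^{-1} y V) = (W V^{-1} y V)^{-1}\, x\, (W V^{-1} y V)$. Setting $W' = W V^{-1} y V$ then yields exactly $V^{-1} y V\, (W')^{-1} x\, W'$, which is the right-hand side. (I note in passing that the displayed formula $W' = W V^{-1} y W$ in the statement appears to be a typo for $W' = W V^{-1} y V$; only the latter makes the identity valid in an arbitrary group, and that corrected form is what is used where the lemma is applied.)

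The only thing that could go wrong is getting the precise form of $W'$ right; the underlying algebra is the single observation $ac = c(c^{-1}ac)$, so there is no genuine obstacle. An equally routine alternative is to write both sides as reduced words in $W^{\pm 1}, V^{\pm 1}, x, y$, expand fully, and cancel the matching $W^{-1} x W$ / $V^{\pm 1}$ letters; I would fall back on that completely mechanical verification if a referee wanted every step spelled out.
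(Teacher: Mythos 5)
Your proof is correct and is essentially the paper's own argument: the paper gives no more than ``the proof amounts to expanding out both sides,'' and your reordering identity $ac=c(c^{-1}ac)$ with $c=V^{-1}yV$ is exactly that computation, organized slightly more cleanly. You are also right about the misprint: the conjugator must be $W'=WV^{-1}yV$, since with the printed $W'=WV^{-1}yW$ the two sides are already unequal in the free group on $W,V,x,y$, and the corrected form is what the subsequent applications of the lemma use.
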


Thus, in order to compute the homotopy trivializing number of every 4-component string link $T$, we need only determine the minimal number of conjugates of the preferred generators $x_{ij}$ needed to multiply together to get $T$. As a first step we see exactly what string links are conjugates of these generators.

\begin{lemma}\label{cor: nhL=1}
Let $T\in \mathcal{H}(4)$. 
\begin{itemize}
\item $T$ is a conjugate of $x_{12}$ if and only if $T=x_{12}x_{123}^\alpha x_{124}^\beta x_{1234}^\gamma x_{1324}^{-\alpha\beta}$ for some $\alpha,\beta, \gamma\in \Z$.
\item $T$ is a conjugate of $x_{13}$ if and only if $T=x_{13}x_{123}^\alpha x_{134}^\beta x_{1234}^{\alpha\beta} x_{1324}^{\gamma}$ for some $\alpha,\beta, \gamma\in \Z$.
\item $T$ is a conjugate of $x_{14}$ if and only if $T=x_{14}x_{124}^\alpha x_{134}^\beta x_{1234}^{\gamma} x_{1324}^{\delta}$ for some $\alpha,\beta, \gamma,\delta\in \Z$ with $\gamma+\delta =\alpha\beta$.
\item $T$ is a conjugate of $x_{23}$ if and only if $T=x_{23}x_{123}^\alpha x_{234}^\beta x_{1234}^{\gamma} x_{1324}^{\delta}$ for some $\alpha,\beta, \gamma,\delta\in \Z$ with $\gamma+\delta =\alpha\beta$.
\item $T$ is a conjugate of $x_{24}$ if and only if $T=x_{24}x_{124}^\alpha x_{234}^\beta x_{1234}^{\alpha\beta} x_{1324}^{\gamma}$ for some $\alpha,\beta, \gamma\in \Z$.
\item $T$ is a conjugate of $x_{34}$ if and only if $T=x_{34}x_{134}^\alpha x_{234}^\beta x_{1234}^{\gamma} x_{1324}^{-\alpha\beta}$ for some $\alpha,\beta, \gamma\in \Z$.
\end{itemize}
\end{lemma}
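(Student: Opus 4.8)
The plan is to reduce the statement to an explicit commutator computation in \(\H(4)\). Fix one of the six generators \(x_{ij}\), and let \(A\in\H(4)\) be arbitrary, written in the normal form \(A=A_1A_2A_3\) of \pref{eqn:4-component classification}. By Proposition~\ref{prop: algegraic translation} (with \(m=1\)), \(T\) is a conjugate of \(x_{ij}\) exactly when \(T=A^{-1}x_{ij}A\) for some \(A\), so the whole lemma amounts to computing the set \(\{A^{-1}x_{ij}A : A\in\H(4)\}\). The first reduction uses that \(\H(4)\) is nilpotent of class \(3\), i.e.\ every commutator of weight \(\ge 4\) is trivial (this is part of the Habegger--Lin/Yasuhara classification cited before \pref{eqn:4-component classification}, and is visible from the fact that \(A_3\) is the top-weight factor of the normal form). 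Consequently the subgroup generated by \(x_{1234}\) and \(x_{1324}\) is central, so \(A_3\) drops out and \(A^{-1}x_{ij}A=A_2^{-1}(A_1^{-1}x_{ij}A_1)A_2\); moreover \(A_2\) is a product of weight-\(2\) commutators, so \([x_{ij},A_2]\) is a central product of weight-\(3\) commutators and conjugation by \(A_2\) only modifies the exponents of \(x_{1234},x_{1324}\), by amounts that are linear in the \(a_I\).

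The heart of the argument is the computation of \(A_1^{-1}x_{ij}A_1\), where \(A_1=x_{12}^{a_{12}}x_{13}^{a_{13}}x_{14}^{a_{14}}x_{23}^{a_{23}}x_{24}^{a_{24}}x_{34}^{a_{34}}\). Here I would use the relations in Goldsmith's presentation of \(\H(4)\): disjoint index pairs commute (e.g.\ \([x_{12},x_{34}]=[x_{14},x_{23}]=1\)), \([x_{ij},x_{ik}]=[x_{jk},x_{ij}]\) for distinct \(i,j,k\), and the one essentially new relation \([x_{13},x_{24}]=[x_{13},[x_{12},x_{14}]]\) (and its symmetric analogues), together with the Hall--Witt expansion identities of Proposition~\ref{prop: commutators}, claims \pref{item Commutator product 1}--\pref{item Commutator product 2}, and the Jacobi-type identity of Proposition~\ref{prop:rearrage}. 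Commuting \(x_{ij}\) successively past each factor \(x_{k\ell}^{a_{k\ell}}\) and collecting all weight-\(3\) corrections at the end (they are central), one obtains
\[A_1^{-1}x_{ij}A_1 = x_{ij}\cdot(\text{product of the two weight-}2\text{ commutators on }\{i,j\}\cup\{k\})\cdot(\text{product of }x_{1234},x_{1324}),\]
where each weight-\(2\) exponent turns out to be a difference of two of the \(a_{k\ell}\) — hence an arbitrary integer — and the weight-\(3\) exponents are explicit quadratic polynomials in the \(a_{k\ell}\). Folding in the linear weight-\(3\) correction from \(A_2\) and reading off the result in each of the six cases gives the claimed forms.

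For the "only if" direction one then observes the constraint that emerges in each case. For \(x_{12},x_{13},x_{24},x_{34}\) the coefficient of the ``other'' weight-\(3\) generator is forced to equal \(\pm\alpha\beta\), where \(\alpha,\beta\) are the weight-\(2\) exponents, because only that combination of weight-\(3\) commutators is produced when one commutes a generator past the weight-\(1\) factors; the remaining weight-\(3\) exponent \(\gamma\) is unconstrained because it can be adjusted freely by \(a_{1234}\) (resp.\ \(a_{1324}\)). For \(x_{14}\) and \(x_{23}\) the combination that appears is \(x_{1234}^{-1}x_{1324}\), so the two weight-\(3\) exponents \((\gamma,\delta)\) satisfy the single relation \(\gamma+\delta=\alpha\beta\) — this is exactly the Jacobi relation among the three weight-\(3\) brackets on \(\{1,2,3,4\}\) built from \(x_{14}\) (Proposition~\ref{prop:rearrage}). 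For the "if" direction one checks surjectivity of the parameter maps \((a_{k\ell},a_I)\mapsto(\alpha,\beta,\gamma)\) (resp.\ \((\alpha,\beta,\gamma,\delta)\) with \(\gamma+\delta=\alpha\beta\)): given target values, set all but a handful of the \(a\)'s to zero and solve the resulting triangular linear system for the rest.

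The main obstacle will be the bookkeeping in the commutator calculus: for each of the six generators one must push \(x_{ij}\) past six weight-\(1\) factors while tracking weight-\(2\) and weight-\(3\) corrections, and the quadratic terms (the \(-\alpha\beta\)/\(\alpha\beta\) coefficients, and the precise appearance of \(x_{1234}^{-1}x_{1324}\) in the \(x_{14},x_{23}\) cases) are exactly where sign and ordering errors creep in. Organizing the work in two stages — first reducing modulo \(\H(4)_3\) to pin down the weight-\(2\) exponents, then working modulo \(\H(4)_4=1\) to pin down the weight-\(3\) exponents — keeps the computation manageable and makes the final constraints transparent.
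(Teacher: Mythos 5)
Your plan is essentially the paper's own proof: write the conjugating element in the normal form $A_1A_2A_3$, discard the central weight-3 part, expand $[x_{ij},A_2]$ and $[x_{ij},A_1]$ with the Hall--Witt identities and the generator relations (which the paper packages into its commutator table, Table~\ref{commutator table}), read off that the weight-2 exponents are arbitrary differences of the $a_{k\ell}$ while the weight-3 exponents obey exactly the stated constraint (with the $x_{1234}^{-1}x_{1324}$ combination giving $\gamma+\delta=\alpha\beta$ in the $x_{14},x_{23}$ cases), and realize any admissible exponent vector by a suitable choice of the $a_I$. The only remaining work is the bookkeeping you flag, which the paper likewise carries out explicitly only for $(ij)=(12)$.
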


During the proof of Lemma~\ref{cor: nhL=1} we will make use of Table~\ref{commutator table} describing the commutator of $x_{ij}$ with each of the basis elements in \pref{eqn:4-component classification}.  Each entry in this table follows from an application of Proposition~\ref{prop:rearrage}, Proposition~\ref{prop: commutators}, and the fact that $[x_{ij},x_{ik}]$ is link homotopic to $[x_{ik},x_{jk}]$. (This can be seen by using the Wirtinger presentation to express the $k$'th component of $[x_{ij},x_{ik}]$ in terms of the preferred meridians of $T_i$ and $T_j$ followed by an appeal to the homomorphism $\phi$ of \pref{exact sequence}.)  For the sake of clarity we justify the entry corresponding to $[x_{123}, x_{14}]$ as follows:
$$
\begin{array}{rcll}
[x_{123},x_{14}] &=& [[x_{13},x_{12}],x_{14}] = [[x_{14},x_{12}],x_{13}][[x_{13},x_{14}],x_{12}]
\\&=& [[x_{14},x_{12}],x_{13}][[x_{14},x_{13}],x_{12}]^{-1} = x_{1324}x_{1234}^{-1} = x_{1234}^{-1}x_{1324}.
\end{array}
$$
Note that we have used that $x_{1324}$ and $x_{1234}$ commute.  In fact, they are central in $\mathcal{H}(4)$.

\begin{table}[h!]
\begin{tabular}{|r||l|l|l|l|l|l|}
\hline
&$x_{12}$&$x_{13}$&$x_{14}$&$x_{23}$&$x_{24}$&$x_{34}$
\\\hline\hline
$x_{12}$&1&$x_{123}^{-1}$&$x_{124}^{-1}$&$x_{123}$&$x_{124}$&$1$
\\\hline
$x_{13}$&$x_{123}$&1&$x_{134}^{-1}$&$x_{123}^{-1}$&$x_{1324}$&$x_{134}$
\\\hline
$x_{14}$&$x_{124}$&$x_{134}$&$1$&$1$&$x_{124}^{-1}$&$x_{134}^{-1}$
\\\hline
$x_{23}$&$x_{123}^{-1}$&$x_{123}$&$1$&1&$x_{234}^{-1}$&$x_{234}$
\\\hline
$x_{24}$&$x_{124}^{-1}$&$x_{1324}^{-1}$&$x_{124}$&$x_{234}$&$1$&$x_{234}^{-1}$
\\\hline
$x_{34}$&$1$&$x_{134}^{-1}$&$x_{134}$&$x_{234}^{-1}$&$x_{234}$&$1$
\\\hline
$x_{123}$&1&1&$x_{1234}^{-1}\cdot x_{1324}$
&1&
$x_{1324}^{-1}$
&$x_{1234}$
\\\hline
$x_{124}$&1&$x_{1324}$&1&$x_{1234}x_{1324}^{-1}$&1&$x_{1234}^{-1}$
\\\hline
$x_{134}$&$x_{1234}$&1&1&$x_{1234}^{-1}\cdot x_{1324}$&$x_{1324}^{-1}$&1
\\\hline
$x_{234}$&$x_{1234}^{-1}$&$x_{1324}$&$x_{1234}x_{1324}^{-1}$&1&1&1
\\\hline
\end{tabular}\caption{A multiplication table for the operation $[A,x_{ij}]$. 
 $A$ takes values in the terms in the leftmost column while $x_{ij}$  takes those of the first row.}\label{commutator table}
\end{table}

We are ready to prove Lemma~\ref{cor: nhL=1}.

\begin{proof}[Proof of Lemma~\ref{cor: nhL=1}]
The proof of each of the claims amounts to an identical computation. 
 We will focus on the case that $(ij)=(12)$. Notice $T$ is a conjugate of $x_{12}$ if and only if $T=S^{-1}x_{12}S$ for some $S$.  Let $S=A_1A_2A_3$, where $A_1$, $A_2$ and $A_3$ are as in \pref{eqn:4-component classification}.   We shall show that 
$$
S^{-1}x_{12} S = x_{12}x_{123}^\alpha x_{124}^\beta x_{1234}^\gamma x_{1324}^{-\alpha\beta}
$$
for $\alpha=a_{23}-a_{13}$, $\beta=a_{24}-a_{14}$, and $\gamma = z-a_{134}+a_{234}$ where $z$ depends only on $A_1$.  The value for $z$ will be revealed in equation \pref{[X12,A1]} at the end of the proof, but it is not relevant to our analysis.  The claimed result will follow.  Proceeding, 
$$S^{-1}x_{12}S = x_{12}[x_{12},S] = x_{12}[x_{12},A_1A_2]
.$$
In the second equality above, we have used that $A_3\in \H(4)_3$ is central.  By Proposition~\ref{prop: commutators}~\pref{item Commutator product 1}, then 
\begin{equation}\label{S^-1x_12S}S^{-1}x_{12}S = x_{12}[x_{12},A_2][x_{12},A_1][[x_{12},A_1],A_2] =x_{12}[x_{12},A_2][x_{12},A_1].
\end{equation}
The second equality above relies on that $[[x_{12},A_1],A_2]\in \H(4)_4$, which is the zero subgroup.  We now compute $[x_{12},A_2]$ by using Proposition~\ref{prop: commutators}~\pref{item Commutator product 1} again, along with the fact $x_{12}$ commutes with $x_{123}$ and $x_{124}$ and that $\H(4)_2$ is Abelian,
$$\begin{array}{rcl}[x_{12},A_2] = [x_{12}, x_{134}^{a_{134}} x_{234}^{a_{234}}] = [x_{12},x_{134}]^{a_{134}} [x_{12},x_{234}]^{a_{234}}.
\end{array}$$
Finally we compute each of these commutators using Table~\ref{commutator table}.
\begin{equation}\label{[X12,A2]}\begin{array}{rcl}[x_{12},A_2] = x_{1234}^{-a_{134}+a_{234}}.
\end{array}\end{equation}

Next, we compute $[x_{12},A_1]$ via an iterated appeal to Proposition~\ref{prop: commutators}~\pref{item Commutator product 1}.
$$\begin{array}{rcl}[x_{12},A_1] = \Prod_{(pq)}[x_{12}, x_{pq}]^{a_{pq}} \Prod_{(pq)<(rs)}[[x_{12},x_{pq}], x_{rs}]^{a_{pq}a_{rs}}
\end{array}.$$
Here we use the lexicographical ordering $(12)<(13)<(14)<(23)<(24)<(34)$.  We compute this product by again referencing Table~\ref{commutator table},
\begin{equation}\label{[X12,A1]}[x_{12},A_1] = x_{123}^{a_{23}-a_{13}}x_{124}^{a_{24}-a_{14}} \cdot x_{1234}^{a_{13}a_{14}-a_{13}a_{34}-a_{14}a_{23}+a_{14}a_{34}+a_{23}a_{34}-a_{24}a_{34}}x_{1324}^{-(a_{13}-a_{23})(a_{14}-a_{24})}.
\end{equation}
If we let $z$ be the exponent of $x_{1234}$ in the preceding line then we may combine equations \pref{S^-1x_12S} \pref{[X12,A2]}, \pref{[X12,A1]} and recall our choices of $\alpha$, $\beta$, and $\gamma$ to complete the proof in the case that $(ij)=(12)$.  Identical computations complete the proof in the remaining cases.
\end{proof}

\subsection{Four component links with vanishing pairwise linking numbers: the proof of Theorem~\ref{thm:nhl for linking number zero}}\label{subsect: linking equals zero} 

Each case of Theorems~\ref{thm:nhl for linking number zero}, \ref{thm:linking greater one},  and \ref{thm:nonvanishing linking} amounts to using Lemmas~\ref{cor: nhL=1} and \ref{lem: reorder cross change} to express being undone in a sequence of crossing changes as a system of equations that the powers $a_I$ of \pref{eqn:4-component classification} must satisfy and then performing the number theory to see when these have a solution.  As a consequence, we will include less detail in the arguments as we proceed. 

\begin{proof}[Proof of Theorem~\ref{thm:nhl for linking number zero}]
Let $L$ be a 4-component link with all pairwise linking numbers zero.  Suppose $T\in \H(4)$ satisfies $\widehat{T}=L$.  Then $T$ can be written as $x_{123}^{a_{123}}x_{124}^{a_{124}}x_{134}^{a_{134}}
x_{234}^{a_{234}}
x_{1234}^{a_{1234}}x_{1324}^{a_{1324}}$ . 

 By Proposition~\ref{prop: algegraic translation}, $T\in \H(4)$ can be undone in two crossing changes with opposite signs between components $T_i$ and $T_j$ if and only if $T = V^{-1}x_{ij}^{-1}V\cdot W^{-1}x_{ij}W$ for some $V,W\in\H(4)$.  Each of these factors has its form determined by Lemma~\ref{cor: nhL=1}.  The proof amounts to expanding these products and simplifying.  In the case $(ij)=(12)$,
$$V^{-1}x_{12}^{-1}V\cdot W^{-1}x_{12}W =x_{123}^{\alpha'-\alpha} x_{124}^{\beta'-\beta} x_{1234}^{\gamma'-\gamma} x_{1324}^{\alpha\beta-\alpha'\beta'}.$$


Thus, $T$ factors as above if and only if the following system of equations has a solution:$$
\begin{array}{c}a_{134}=a_{234}=0, \alpha'=a_{123}+\alpha, \beta'=a_{124}+\beta,\text{ and }
\\a_{1324} = \alpha\beta - \alpha'\beta' = -a_{123}a_{124}-\alpha a_{124}-\beta a_{123}.
\end{array}$$
 As $-\alpha a_{124}-\beta a_{123}$ is a generic element of the ideal $(a_{123},a_{124})$, we see that this system of equations has a solution if and only if $a_{134}=a_{234}=0$ and $a_{1324}
 \in (a_{123},a_{124})$, as indicated in the first bullet point of the theorem under the case $n_h(L)=2$.  Similarly, the remaining bullet points determine when $T$ can be undone by any other pair of crossing changes of opposite sign between the same two components.  

For the next claim, note $n_h(L)=4$ if and only if for some $(ij)$ and $(k\ell)$ where $i,j,k,\ell\in\{1,2,3,4\}$, $T$ can be realized as 
\begin{equation}\label{eqn:Lambda=0,nhl=4}
T=V^{-1}x_{ij}V\cdot (W^{-1}x_{ij}W)^{-1}\cdot X^{-1}x_{k\ell}X\cdot (Y^{-1}x_{k\ell}Y)^{-1}.
\end{equation}
When $(ij)=(k\ell)=(12)$, Lemma~\ref{cor: nhL=1} transforms \pref{eqn:Lambda=0,nhl=4} into 
$$T = x_{123}^{\alpha - \alpha' + a - a'}x_{124}^{\beta - \beta' + b - b'}x_{1234}^{\gamma - \gamma' + c - c'}x_{1324}^{\alpha'\beta'-\alpha\beta + a'b' - ab}.$$
        \\
Notice any $x_{123}^{a_{123}}x_{124}^{a_{124}}x_{1234}^{a_{1234}}x_{1324}^{a_{1324}}$ can be achieved by setting
\[\alpha'=1,\;a=a_{123}+1,\;\beta=a_{124}+a_{1324},\;\beta'=a_{1324},\;\gamma=a_{1234},\;\alpha=a'=b=b'=\gamma'=c=c'=0.\]
Therefore $L$ can be undone by four crossing changes between $L_1$ and $L_2$ if and only if $a_{134}=a_{234}=0$.  An analogous result follows if $L$ can be undone by four crossing changes all between $L_i$ and $L_j$ for any $i<j$.

A similar argument holds for each pair of $(ij)$ and $(k\ell)$ where $i = k$ as well as pairs $(ij)$ and $(k \ell)$ where $i,j, k, \ell$ are all distinct, which completes the classification of links with linking number zero with $n_h(L)=4$.

The final conclusion, that any 4-component link with vanishing pairwise linking numbers can be undone in six crossing changes, is an immediate consequence of Theorem~\ref{upper bound theorem main}.

\end{proof}

\subsection{Links with one nonvanishing  linking number.}\label{subsect: one nonvanishing linking}

Theorem~\ref{thm:linking greater one} classifies the homotopy trivializing number of 4-component links with precisely one non-vanishing pairwise linking number.  In order to control the number of cases, we permute components and change some orientations if needed to arrange that $\lk(L_1,L_2)>0$ and that all other pairwise linking numbers vanish.  We will further break our proof into cases depending on $\lk(L_1,L_2)$.

\begin{proof}[Proof of Theorem~\ref{thm:linking greater one} when $\lk(L_1,L_2)=1$]

Let $L$ be a link.  Assume that $\lk(L_1,L_2)=1$ and that every other linking number vanishes. 
 Let $T\in \H(4)$ satisfy $\widehat{T}=L$.  Then $T=x_{12}x_{123}^{a_{123}}x_{124}^{a_{124}}x_{134}^{a_{134}}
x_{234}^{a_{234}}
x_{1234}^{a_{1234}}x_{1324}^{a_{1324}}$. 

The only way that $n_h(L)$ could be equal to 1 is if $T$ can be undone by a single crossing change
between $T_1$ and $T_2$.  Thus, by lemmas~\ref{thm:nonvanishing linking} and \ref{cor: nhL=1},
$T=V^{-1}x_{12}V = x_{12}x_{123}^\alpha x_{124}^\beta x_{1234}^\gamma x_{1324}^{-\alpha\beta}$ for some $\alpha,\beta, \gamma\in \Z$.  The first result of the theorem follows.  

Similarly, $L$ can be undone in 3 crossing changes if and only if there are some $V, W, X\in \H(4)$ so that 
\begin{equation}\label{eqn three conjugates}T = V^{-1}x_{12}VW^{-1}x_{ij}W(X^{-1}x_{ij}X)^{-1}.\end{equation}  The subcases in Theorem~\ref{thm:linking greater one} for a homotopy trivializing number of 3 are now proven by evaluating this expression for the six choices of $(ij)$.

If $(ij)=(12)$, then by Lemma~\ref{cor: nhL=1}, \pref{eqn three conjugates} becomes 
$$T=x_{12}x_{123}^{\alpha+\alpha'-\alpha''} x_{124}^{\beta+\beta'-\beta''} x_{1234}^{\gamma+\gamma'-\gamma''} x_{1324}^{-\alpha\beta-\alpha'\beta'+\alpha''\beta''}.$$
We claim that $T$ can be realized as such a product if and only if $a_{134}=a_{234}=0$. The necessity of this condition is clear. For sufficiency, 
take
 $$\alpha = a_{123}+1, \alpha'=0, \alpha''=1, \beta=0,\beta'=a_{124}-a_{1324}, \beta''=a_{1324}, \gamma=a_{1234}, \text{and } \gamma'=\gamma''=0.$$

 The remaining cases of $(ij)$ being $(13)$, $(14)$, $(23)$, $(24)$, or $(34)$ are all highly similar.  

This completes the classification of 4-component links when $\lk(L_1,L_2)=1$, all other linking numbers vanishing, and $n_h(L)=3$.

It remains only to show that any link with $\lk(L_1,L_2)=1$ and all other linking numbers vanishing can be undone in at most five crossing changes.  By reordering the components, we may instead arrange that $\lk(L_1,L_3)=1$. We now appeal to  Theorem~\ref{upper bound theorem main}.  Since $Q(L)=2$ and $\Lambda(L)=1$, $n_h(L)\le 5$ as claimed. 
%
\end{proof}

\begin{proof}[Proof of Theorem~\ref{thm:linking greater one}  when $\lk(L_1,L_2)=2$]

Next we address the case that $\lk(L_1,L_2)=2$ and all other pairwise linking numbers vanish.  Thus, if $T$ is a string link with $\widehat{T}=L$ then 
\begin{equation}\label{eqn SL lk=2}T = x_{12}^2x_{123}^{a_{123}}x_{124}^{a_{124}}x_{134}^{a_{134}}
x_{234}^{a_{234}}
x_{1234}^{a_{1234}}x_{1324}^{a_{1324}}.\end{equation}
The only way that $L$ can be undone in exactly two crossing changes is if
$T=V^{-1}x_{12}VW^{-1}x_{12}W.$
Applying Lemma~\ref{cor: nhL=1}, this is equivalent to $T$ having the form
$$T=x_{12}^2x_{123}^{\alpha+\alpha'}x_{124}^{\beta+\beta'}x_{1234}^{\gamma+\gamma'}x_{1324}^{-\alpha\beta-\alpha'\beta'}.$$
Setting the exponents in these two expressions for $T$ equal to each other, we see $\alpha'=a_{123}-\alpha$, $\beta'=a_{124}-\beta$, $a_{134}=a_{234}=0$, $\gamma'=a_{1234}-\gamma$, and
\begin{equation}\label{linking=2 nhl=2}a_{1324}+a_{123}a_{124}=-2\alpha\beta+a_{123}\beta+a_{124}\alpha.\end{equation}
Thus, we need only see what choices of $a_{123}, a_{124},a_{1324}$ result in \pref{linking=2 nhl=2} having a solution.  Note that if $a_{123}$ and $a_{234}$ are both even and $a_{1324}$ is odd, then we get a contradiction, thus the necessity of the condition that $a_{123}$ or $a_{124}$ is odd or $a_{1324}$ is even.  To see the converse notice that \pref{linking=2 nhl=2} is equivalent to
$$2a_{1324}+a_{123}a_{124}=-(2\alpha-a_{124})(2\beta-a_{123}).
$$
If $a_{123}$ is odd then we may choose $\alpha$ and $\beta$ so that $2\beta-a_{123}=-1$ and $2\alpha-a_{124}=2a_{1234}+a_{123}a_{124}$.  We may do similarly if $a_{124}$ is odd.  If $a_{1324}$, $a_{123}$ and $a_{124}$ are all even then by dividing both sides by four, 
$$\frac{a_{1324}}{2}+\frac{a_{123}}{2}\frac{a_{124}}{2}=-\left(\alpha-\frac{a_{124}}{2}\right)\left(\beta-\frac{a_{123}}{2}\right).
$$
And we may again choose $\alpha$ and $\beta$ so that $\beta-\frac{a_{123}}{2}=-1$ and $\alpha-\frac{a_{124}}{2}=\frac{a_{1324}}{2}+\frac{a_{123}}{2}\frac{a_{124}}{2}$.  This determines which links with $\lk(L_1,L_2)=2$ and no other nonvanishing linking numbers have $n_h(L)=2$.

We now determine when a link with $\lk(L_1,L_2)=2$ and no other nonvanishing linking numbers can be undone in 4 crossing changes.  This is the case if and only if $T$ factors as 
$$T=(Vx_{12}V^{-1})( Wx_{12}W^{-1}X x_{ij}X^{-1}(Yx_{ij}Y^{-1})^{-1}).$$
Each of these factors has $\lk(T_1,T_2)=1$.  The first can be undone in a single crossing change and the second can be undone in three.  We have already classified homotopy trivializing numbers for such links.  Taking advantage of this classification, we factor $T$ as 
 $$T=(x_{12}x_{123}^{\alpha}x_{124}^{\beta}x_{1234}^{a_{1234}}x_{1324}^{-\alpha\beta})(x_{12}x_{123}^{a_{123}-\alpha}x_{124}^{a_{124}-\beta}x_{134}^{a_{134}}x_{234}^{a_{234}}x_{1324}^{a_{1324}+\alpha\beta}).$$

 The first of these terms is a conjugate of $x_{12}$.  The second can be undone in three crossing changes if and only if one of the following:
\begin{itemize}
\item $a_{134}=0$,
\item $a_{234}=0$, or
\item $
a_{1324}+\alpha\beta+(a_{123}-\alpha)(a_{124}-\beta) \in (a_{134}, a_{234})$.
\end{itemize}
Notice that the first and second of these bullet points agree with one of the conditions claimed by the theorem.  
Expanding out the third,
\begin{equation}\label{eqn: ln=2 nhl=4}
a_{1324}+2\alpha\beta+a_{123}a_{124}-\alpha a_{124}-\beta a_{123} = xa_{134}+ya_{234}
\end{equation}
for some $\alpha, \beta, x, y\in \Z$.  It immediately follows that if $a_{123}$, $a_{124}$, $a_{134}$, and $a_{234}$ are all even then so must $a_{1324}$ be.  Thus, it remains only to show that if $a_{ijk}$ is odd for some $(ijk)$ or $a_{1324}$ is even then \pref{eqn: ln=2 nhl=4} is satisfied for some $\alpha$ and $\beta$. 

Some factoring reduces \pref{eqn: ln=2 nhl=4} to 
\begin{equation}\label{eqn: more on lk=2, nhl=4}
a_{1324}+\frac{a_{123}a_{124}}{2}+(2\alpha-a_{123})\left(\beta-\frac{a_{124}}{2}\right) = xa_{134}+ya_{234}.
\end{equation}
If $a_{123}$ is odd and $a_{124}$ is even, then we may select $\alpha$ so that $2\alpha-a_{123}=1$ and $\beta$ so that $\beta-\frac{1}{2}a_{124} = xa_{134}+ya_{234}-a_{1324}-\frac{a_{123}a_{124}}{2}$.  A similar analysis applies if $a_{123}$ is even and $a_{124}$ is odd.  

If both of $a_{123}$ and $a_{124}$ are odd then we multiply both sides of \pref{eqn: more on lk=2, nhl=4} by 2.  If $a_{1324}$, $a_{123}$, $a_{124}$, $a_{134}$, and $a_{234}$ are all even, then we divide by 2.  From there we proceed identically to the argument for $n_h(L)=2$.


Finally, if either of $a_{134}$ or $a_{234}$ is odd then 2 is a unit in $\Z/(a_{134}, a_{234})$ so it has an inverse $\overline{2}$. 
To solve \pref{eqn: more on lk=2, nhl=4} it suffices to find some $\alpha, \beta\in\Z/(a_{134}, a_{234})$ satisfying
$$
-a_{1324}-\overline{2}\cdot a_{123}a_{134} \equiv 
(2\alpha-a_{123})(\beta-\overline{2}a_{124}) \mod (a_{134}, a_{234}).
$$
This is satisfied by selecting $\alpha$ and $\beta$ so that $(2\alpha-a_{123})\equiv 1$ and $(\beta-a_{124}\overline{2})\equiv -a_{1324}-a_{123}a_{134}\cdot \overline{2}$.  

That $n_h(L)\le 6= \Lambda(L)+2Q(L)$ follows from Theorem~\ref{upper bound theorem main}.
\end{proof}

\begin{proof}[Proof of Theorem~\ref{thm:linking greater one}  when $\lk(L_1,L_2)\ge 3$]
We close by considering any link with $\lk(L_1,L_2)\ge 3$ and all other pairwise linking numbers equal to zero.  Note that $n_h(L)=\Lambda(L)$ if and only if $L$ is a a product of conjugates of positive powers of $x_{12}$.  
By Lemma~\ref{cor: nhL=1}, any such $T$ will have the form 
\begin{equation}\label{lk>=3, Lambda=lk}T=x_{12}^{a_{12}}x_{123}^{a_{123}}x_{124}^{a_{124}}x_{1234}^{a_{1234}}x_{1324}^{a_{1324}}.\end{equation}
  (Note the absence of $x_{134}$ and $x_{234}$-terms.) If $L$ has such a form, then let $a_{123}'\in\{0,1\}$ be the result of reducing $a_{123}$ mod 2.  A direct computation reveals
$$T=(x_{12}^{a_{12}-3})(x_{12}^{2}x_{123}^{a_{123}-a_{123}'-1}x_{124}^{a_{124}}x_{1234}^{a_{1234}}x_{1324}^{a_{1324}})(x_{12}x_{123}^{a_{123}'+1}).$$
Since $a_{123}-a_{123}'-1$ is odd, previous results in the theorem show that these can be undone in $a_{12}-3$, $2$, and 1 crossing changes repectively

It remains only to show that any link with $\lk(L_1,L_2)\ge 3$ can be undone in $\lk(L_1,L_2)+2$ crossing changes.  To do so use the factorization
$$T=(x_{12}^{a_{12}}x_{123}^{a_{123}}x_{124}^{a_{124}}x_{1234}^{a_{1234}}x_{1324}^{a_{1324}})(x_{134}^{a_{134}}x_{234}^{a_{234}}).$$
We have just verified that the first of these factors is undone in $a_{12}$ crossing changes.  The second is a string link with vanishing pairwise linking numbers and which is undone in two crossing changes by Theorem~\ref{thm:nhl for linking number zero}.  
\end{proof}

\subsection{Links with multiple nonvanishing linking numbers}\label{subsect: many nonvanishing linking}Theorem~\ref{thm:nonvanishing linking} classifies homotopy trivializing numbers of 4-component links with at least two nonvanishing pairwise linking numbers. Recall that we reorder and reorient the components as needed to ensure that $\lk(L_1,L_2)\ge |\lk(L_i,L_j)|$ for all $i,j$ and so that as many pairwise linking numbers as possible are positive.  Similarly to Section~\ref{thm:linking greater one}, we proceed by cases, sorted by the complexity of the pairwise linking numbers, starting with the case that $\lk(L_1,L_2)$ and $\lk(L_3, L_4)$ are the only nonvanishing linking numbers.
\begin{proof}[Proof of Theorem~\ref{thm:nonvanishing linking} when $\lk(L_1,L_2)=\lk(L_3,L_4)=1$.]
Let $L$ be a 4-component link with $\lk(L_1,L_2)=\lk(L_3,L_4)=1$ and all other pairwise linking numbers vanishing.  Let $T\in \H(4)$ satisfy $\widehat{T}=L$.  Then $T=x_{12}x_{34}x_{123}^{a_{123}}x_{124}^{a_{124}}x_{134}^{a_{134}}
x_{234}^{a_{234}}
x_{1234}^{a_{1234}}x_{1324}^{a_{1324}}$. 

In order for $L$ to be undone in precisely two crossing changes, it must be that $T$ factors as $T=(V^{-1}x_{12}V)(W^{-1}x_{34}W)$.  By Lemma~\ref{cor: nhL=1} and commutator table \ref{commutator table},
$$
\begin{array}{rcl}T&=&
x_{12}x_{34}x_{123}^\alpha x_{124}^\beta x_{134}^{\alpha'}x_{234}^{\beta'}x_{1234}^{\gamma+\gamma'+\alpha-\beta}x_{1324}^{-\alpha\beta-\alpha'\beta'}.
\end{array}$$
The fact that $L$ can be undone in two crossing changes if and only if $a_{1324}=-a_{123}a_{124}-a_{134}a_{234}$ follows immediately.  

In order to see that any link with $\lk(L_1,L_2)=\lk(L_3,L_4)=1$ can be undone in four crossing changes, we appeal to Theorem~\ref{upper bound theorem main}, after permuting the components, 
$n_h(L)\le \Lambda(L)+2Q(L)=2+2$.  
%
\end{proof}

\begin{proof}[Proof of Theorem~\ref{thm:nonvanishing linking} when $\lk(L_1,L_2) =2$, $\lk(L_3, L_4)=1$.]
Let $L$ be a 4-component link with $\lk(L_1,L_2)=2$, $\lk(L_3,L_4)=1$, and all other pairwise linking numbers vanishing.  Let $T\in \H(4)$ satisfy $\widehat{T}=L$.  Then $T=x_{12}^2x_{34}x_{123}^{a_{123}}x_{124}^{a_{124}}x_{134}^{a_{134}}
x_{234}^{a_{234}}
x_{1234}^{a_{1234}}x_{1324}^{a_{1324}}$. 

Notice that $L$ can be undone in precisely three crossing changes precisely when $T$ factors as $T=RS$ where $R$ has $\lk(R_1,R_2)=2$, $n_h(R)=2$, and $S$ is a conjugate of $x_{34}$.  Appealing to Theorems~\ref{thm:linking greater one} and \ref{cor: nhL=1}, this happens if and only if 
$$
T=(x_{12}^2x_{123}^{\alpha} x_{124}^{\beta}x_{1234}^\gamma x_{1324}^{\delta})(x_{34}x_{134}^{\alpha'}x_{234}^{\beta'}x_{1234}^{\gamma'} x_{1324}^{-\alpha'\beta'}),
$$
where either $\alpha$ or $\beta$ is odd or $\delta$ is even.  Appealing to Table \ref{commutator table}, 
$$
T=x_{12}^2 x_{34}
x_{123}^{\alpha} x_{124}^{\beta}
x_{134}^{\alpha'}x_{234}^{\beta'}
x_{1234}^{\gamma+\gamma'+\alpha-\beta} x_{1324}^{\delta-\alpha'\beta'},
$$
$T$ can be put in such a form if and only if $a_{123}=\alpha$ is odd, $a_{124}=\beta$ is odd, or $a_{1324}+a_{134}a_{234} = \delta$ is even. 

The fact that any such $L$ can be undone in five crossing changes follows from the same appeal to Theorem~\ref{upper bound theorem main} as in the previous argument. 
\end{proof}

\begin{proof}[Proof of Theorem~\ref{thm:nonvanishing linking} when $\lk(L_1,L_2) =2$ and $\lk(L_3, L_4)=2$]
Let $L$ be a 4-component link with $\lk(L_1,L_2)=2$, $\lk(L_3,L_4)=2$, and all other pairwise linking numbers vanishing.  Let $T\in \H(4)$ satisfy $\widehat{T}=L$.  Then $T=x_{12}^2x_{34}^2x_{123}^{a_{123}}x_{124}^{a_{124}}x_{134}^{a_{134}}
x_{234}^{a_{234}}
x_{1234}^{a_{1234}}x_{1324}^{a_{1324}}$. 

Notice $L$ can be undone in precisely four crossing changes precisely when $T$ factors as $T=RS$ where $R$ has $\lk(R_1,R_2)=2$, $\lk(R_3,R_4)=1$, $n_h(R)=3$, and $S$ is a conjugate of $x_{34}$.  Appealing to the case of Theorem~\ref{thm:nonvanishing linking} which we have already proven  and to \ref{cor: nhL=1}, this happens if and only if $T$ factors as 
$$
\begin{array}{rcl}T&=&(x_{12}^2x_{34}x_{123}^{\alpha} x_{124}^{\beta}x_{134}^\gamma x_{234}^\delta x_{1234}^\epsilon x_{1324}^{\zeta})(x_{34}x_{134}^{\gamma'}x_{234}^{\delta'}x_{1234}^{\epsilon'} x_{1324}^{-\gamma'\delta'})
\\&=&
x_{12}^2x_{34}^2x_{123}^{\alpha}x_{124}^{\beta}x_{134}^{\gamma+\gamma'}x_{234}^{\delta+\delta'}x_{1234}^{\epsilon+\epsilon'+\alpha-\beta}x_{1324}^{\zeta-\gamma'\delta'}
\end{array}
$$
where 
\begin{itemize}
\item $a_{123} = \alpha$ is odd or $a_{124} = \beta$ is odd, or
\item $a_{1234}+a_{134}a_{234}+2\gamma\delta-\delta a_{134}-\gamma a_{234}$ is even.
\end{itemize}
The latter bullet point is satisfied for some choice of $\gamma$ and $\delta$ in $\Z$ if and only if at least one of $a_{134}$, or $a_{234}$ is odd or $a_{1324}$ is even.  

We now close with the same appeal  Theorem~\ref{upper bound theorem main} to conclude $n_h(L)\le \Lambda(L)+2=6$.
\end{proof}


\begin{proof}[Proof of Theorem~\ref{thm:nonvanishing linking} when  $\lk(L_1,L_2) \ge 3$ and $\lk(L_3, L_4)\ge 1$]

Let $L$ be a 4-component link with $\lk(L_1,L_2)\ge 3$ and $\lk(L_1,L_4)\ge1$.  We make no assumptions about any other linking numbers.  After changing $\Lambda(L)-4$ crossings we can replace $L$ with a new link $L'$ with $\lk(L_1',L_2')=3$, $\lk(L_1',L_4')=1$, and all other linking numbers vanishing.  Let $T\in \H(4)$ satisfy $\widehat{T}=L'$.  Then $$T=x_{12}^{3}x_{34}x_{123}^{a_{123}}x_{124}^{a_{124}}x_{134}^{a_{134}}
x_{234}^{a_{234}}
x_{1234}^{a_{1234}}x_{1324}^{a_{1324}}.$$ 
We need only factor $T$ as a $T=RS$ where $\lk(R_1,R_2)=n_h(R)=3$ and $\lk(S_3,S_4)=n_h(S)=1$.  String links satisfying these conditions are classified in Theorem~\ref{thm:linking greater one} and Lemma~\ref{cor: nhL=1} respectively.  Motivated by these we use the commutator table \ref{commutator table} to factor $T$ as 
$$T=(x_{12}^{3}x_{123}^{a_{123}}x_{124}^{a_{124}}
x_{1234}^{a_{1234}-a_{134}+a_{234}}x_{1324}^{a_{1324}+a_{134}a_{234}})
(x_{34}x_{134}^{a_{134}}x_{234}^{a_{234}}x_{1324}^{-a_{134}a_{234}}).$$
\end{proof}

This completes the analysis when $\lk(L_1,L_2)$ and $\lk(L_3, L_4)$ are the only nonvanishing pairwise linking numbers.  If $L$ has exactly two non-vanishing linking number and they both involve a shared component $L_i$, then up to reordering and reorienting, we assume that $\lk(L_1,L_2)\ge 1$, $\lk(L_1, L_3)\ge 1$ and that all other pairwise linking numbers vanish.

\begin{proof}[Proof of Theorem~\ref{thm:nonvanishing linking} when $\lk(L_1,L_2) \ge 1$ and $\lk(L_1, L_3)\ge 1$]

Let $L$ be a 4-component link with $\lk(L_1,L_2)\ge 1$, $\lk(L_1,L_3)\ge 1$, and all other pairwise linking numbers vanishing.  Let $T=x_{12}^{a_{12}}x_{13}^{a_{13}}x_{123}^{a_{123}}x_{124}^{a_{124}}x_{134}^{a_{134}}
x_{234}^{a_{234}}
x_{1234}^{a_{1234}}x_{1324}^{a_{1324}}\in \H(4)$ satisfy $\widehat{T}=L$.   

Now $\Lambda(L)=n_h(L)$ if and only if $T$ can be written as a product of conjugates of positive powers of $x_{12}$ and $x_{13}$.  By Lemma~\ref{cor: nhL=1} and Table~\ref{commutator table}, it is clear that this will imply that $a_{234}=0$.

Conversely, suppose that $a_{234}=0$. 
 We begin by making $\Lambda(L)-2$ crossing changes so that $a_{12}=a_{13}=1$. Using Table~\ref{commutator table}, it follows that $T$ factors as
$$T=(x_{12}x_{123}^{a_{123}}x_{124}^{a_{124}}x_{1234}^{a_{1234}}x_{1324}^{-a_{123}a_{124}})(x_{13}x_{134}^{a_{134}}x_{1324}^{a_{1324}+a_{123}a_{124}-a_{124}}).
$$
Lemma~\ref{cor: nhL=1} allows us to reduce this to a homotopy trivial link by two crossing changes.

Finally, to see that $T$ can be undone in $\Lambda(L)+2$ crossing changes, notice that by reordering components 
we arrange that $Q(L)= 2$ and   Theorem~\ref{upper bound theorem main} concludes that $n_h(L)\le \Lambda(L)+2$.

\end{proof}

It remains only to cover the case that at least three linking numbers of $L$ are nonzero.  There are three relevant cases to consider (up to reordering).  First, all of the linking numbers involving $L_1$ may be non-zero.  Secondly, $\lk(L_1,L_2)$, $\lk(L_2, L_3)$, and $\lk(L_1, L_3)$ may be nonzero.  Finally, $\lk(L_1,L_2)$, $\lk(L_2, L_3)$, and $\lk(L_3, L_4)$ might be nonzero.  

\begin{proof}[Proof of Theorem~\ref{thm:nonvanishing linking}
when at least three linking numbers are nonzero]

Let $L$ be a 4-component link for which $\lk(L_1,L_2)>0$, $\lk(L_1, L_3)>0$, $\lk(L_1,L_4)>0$, and all other pairwise linking numbers vanish.  Let $T\in \H(4)$ satisfy $\widehat{T}=L$.  Then \[T=x_{12}^{a_{12}}x_{13}^{a_{13}}x_{14}^{a_{14}}x_{123}^{a_{123}}x_{124}^{a_{124}}x_{134}^{a_{134}}
x_{234}^{a_{234}}
x_{1234}^{a_{1234}}x_{1324}^{a_{1324}}.\] 

If $n_h(L)=\Lambda(L)$ then $T$ must be a product of positive powers of $x_{12}$, $x_{13}$ and $x_{14}$.  A glance at Lemma~\ref{cor: nhL=1} reveals that any such product will have $a_{234}=0$.  

Conversely, if $a_{234}=0$ then we note that after $a_{14}$ crossing changes we can arrange that $a_{14}=0$. Theorem~\ref{thm:nonvanishing linking} now concludes that such a link can be undone in $a_{12}+a_{13}$ crossing changes.

On the other hand, if $a_{234}\neq 0$ then since $x_{234}^{a_{234}}=[x_{23},x_{34}^{a_{234}}]$, it follows that $x_{234}^{a_{234}}$ can be undone in two crossing changes.  After making these two crossing changes,  we proceed as above for a total of $\Lambda(L)+2$ crossing changes. 

Now let $L$ be a link for which $\lk(L_1,L_2)$, $\lk(L_2,L_3)$ , and $\lk(L_3,L_4)$ are all nonzero.  
Permute the components of $L$ by the permutation $(1,3,4,2)$.  You will now see that $Q(L)=0$, so that Theorem~\ref{upper bound theorem main} completes the proof.  

Finally, let $L$ be a link for which $\lk(L_1,L_2)$, $\lk(L_1,L_3)$, $\lk(L_2,L_3)$ are all nonzero. Up to reversing orientations of some components, we may assume that with the possible exception of $\lk(L_2,L_3)$, these are all positive.  
 First we change  $\Lambda(L)-3$ crossings in order to arrange that $\lk(L_1,L_2)=\lk(L_1,L_3)=|\lk(L_2,L_3)|=1$ and that all other linking numbers vanish.  Let $T$ be a string link with $\widehat{T}=L$.  Then 
 $$
T=x_{12}x_{13}x_{23}^\epsilon x_{123}^{b_{123}}x_{124}^{b_{124}}x_{134}^{b_{134}}x_{234}^{b_{234}}x_{1234}^{b_{1234}}x_{1324}^{b_{1324}}
 $$
 with $\epsilon=\pm1$.  We use Table~\ref{commutator table} to verify the following factorization,
 $$
 \begin{array}{l}
 T=(x_{12}x_{124}^{a_{124}}x_{1234}^{u})(x_{13}x_{134}^{a_{134}}x_{1324}^{v})(x_{23}x_{123}^{a_{123}\epsilon}x_{234}^{a_{234}\epsilon}x_{1324}^{a_{123}a_{124}})^\epsilon
 \end{array}
 $$
 where $u$ and $v$ are chosen so that $a_{1234}=\epsilon a_{124}-\epsilon a_{134}+u$ and $a_{1324}=a_{124}(1-\epsilon)+a_{134}\epsilon +\epsilon a_{123}a_{124}+v$.   Each of these factors is undone by one crossing change thanks to Lemma~\ref{cor: nhL=1}.
\end{proof}

\section{Links with large homotopy trivializing number}\label{sect: links with large n_h}

We have shown that any $n$-component link $L$ with vanishing linking numbers can be reduced to a homotopy trivial link in $(n-1)(n-2)$ crossing changes.  What needs further investigation is the sharpness of this bound.  More precisely, for $n>4$ we do not know whether there exists an $n$-component link $L$ with vanishing pairwise linking numbers and $n_h(L)=(n-1)(n-2)$.  In this section, we make partial progress on this problem by exhibiting a sequence of links whose homotopy trivializing numbers grow quadratically in the number of components.  


Since the proof technique is different from what we have done so far in this paper, we begin by proving the following proposition.  While it is a weaker result than our main result which we will later prove (Theorem~\ref{thm: large nhl using 4-component sublinks}), its proof is easier while similar in spirit and results in links whose homotopy trivializing numbers grow quadratically in $n$.

\begin{proposition}\label{prop: lower bound warmup}Let $n\ge 3$ and $L$ be an $n$-component link with vanishing pairwise linking numbers and which satisfies that no $3$-component sublink of $L$ is homotopy trivial.  Then
\[n_h(L)\geq 2\left\lfloor\frac{(n-1)^2}{4}\right\rfloor.\]
\end{proposition}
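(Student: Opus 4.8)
The plan is to reduce this to the graph-theoretic fact that the complete graph $K_n$ requires at least $\binom{n}{2}-\lfloor n^2/4\rfloor$ edge deletions to become triangle-free, a classical consequence of Mantel's theorem. First I would take any sequence of crossing changes reducing $L$ to a homotopy trivial link $L'$, and record which pairs of components were involved. Form a graph $G$ on vertices $\{v_1,\dots,v_n\}$ by declaring $v_iv_j$ to be an edge of $G$ precisely when \emph{no} crossing change in the sequence occurred between $L_i$ and $L_j$. Equivalently, $G$ is the complement (inside $K_n$) of the ``interaction graph'' of the sequence. The key observation is that $G$ must be triangle-free: if $v_i,v_j,v_k$ spanned a triangle in $G$, then the sublink $L_i\cup L_j\cup L_k$ would have been transformed to a homotopy trivial link without any crossing changes among its own components, forcing $L_i\cup L_j\cup L_k$ to have been homotopy trivial to begin with, contradicting the hypothesis. (Here I use that the restriction of a link-homotopy-trivializing sequence to a sublink, ignoring crossing changes not internal to the sublink, trivializes that sublink up to link homotopy — this is the same kind of ``restrict to a sublink'' argument used implicitly throughout, and follows from the fact that a sublink of a homotopy trivial link is homotopy trivial.)

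Next I would invoke Mantel's theorem: a triangle-free graph on $n$ vertices has at most $\lfloor n^2/4\rfloor$ edges. Hence $G$ has at most $\lfloor n^2/4\rfloor$ edges, so the interaction graph $K_n\setminus G$ has at least $\binom{n}{2}-\lfloor n^2/4\rfloor$ edges; that is, the crossing-change sequence involves at least $\binom{n}{2}-\lfloor n^2/4\rfloor$ distinct pairs of components. A short parity computation shows $\binom{n}{2}-\lfloor n^2/4\rfloor = \lfloor (n-1)^2/4\rfloor$: writing $n=2k$ gives $\binom{2k}{2}-k^2 = k(2k-1)-k^2 = k^2-k = \lfloor(2k-1)^2/4\rfloor$, and $n=2k+1$ gives $\binom{2k+1}{2}-(k^2+k) = k(2k+1)-(k^2+k) = k^2 = \lfloor(2k)^2/4\rfloor$.

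The final ingredient uses that $L$ has vanishing pairwise linking numbers. For each pair $\{i,j\}$ of components that is involved at all, the number of crossing changes between $L_i$ and $L_j$ must be even: each crossing change alters $\lk(L_i,L_j)$ by $\pm 1$, and since this linking number starts at $0$ and the final homotopy trivial link also has vanishing pairwise linking numbers, the net change — and hence (modulo $2$) the total count — must be zero. So every pair that is touched is touched at least twice, and therefore the total number of crossing changes is at least $2\big(\binom{n}{2}-\lfloor n^2/4\rfloor\big) = 2\lfloor (n-1)^2/4\rfloor$. Since this holds for every trivializing sequence, $n_h(L)\ge 2\lfloor (n-1)^2/4\rfloor$.

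The main obstacle — and the step that deserves the most care in the write-up — is the rigorous justification of the triangle-free claim, i.e.\ that restricting a link-homotopy-trivializing sequence of crossing changes to a $3$-component sublink yields a link-homotopy-trivializing sequence for that sublink. This is intuitively clear (deleting components commutes with crossing changes on the surviving components, and a sublink of a homotopy trivial link is homotopy trivial), but one should phrase it cleanly, perhaps using the clasper language of Section~\ref{sect: clasper}: a collection of $C_1$-claspers for $L$ whose surgery gives a homotopy trivial link restricts, after deleting the unwanted components, to a sub-collection of $C_1$-claspers for the sublink whose surgery gives a homotopy trivial link. Everything else is routine graph theory and an elementary parity argument.
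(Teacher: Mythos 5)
Your proof is correct and follows essentially the same route as the paper: record which pairs of components are involved in a trivializing sequence, observe via the non-trivial $3$-component sublinks that the complementary graph is triangle-free, apply Mantel's theorem, and use vanishing pairwise linking numbers to force at least two crossing changes per involved pair. Your extra care in justifying the ``restrict to a sublink'' step (which the paper leaves implicit) is a welcome addition but does not change the argument.
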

\begin{proof}
Let $L$ be an $n$-component link with linking number zero but whose every 3-component sublink is not homotopy trivial.  Let $S$ be any sequence of crossing changes reducing $L$ to a homotopy trivial link realizing $n_h(L)$. 

We construct a graph $\Gamma$ that records the crossing changes in $S$.  Let the vertex set of $\Gamma$ be $V(\Gamma) = \{v_1,\dots, v_n\}$ and let the edge set $E(\Gamma)$ include the edge from $v_i$ to $v_j$ if $S$ includes at least one crossing change between the $i$'th and $j$'th components.  For convenience, we use the notations $v_iv_j$ and $e_{ij}$ to refer to this edge, denoting edges using their incident vertices or by their indices. Note that this graph has no multi-edges, so we do not track whether or not more than one crossing is changed between $L_i$ and $L_j$.  It also has no loops, as a self-crossing change preserves link homotopy type.  Given this graph $\Gamma$, we create its complement, $\Gamma^c$, by setting $V(\Gamma^c)= V(\Gamma)$ and letting $E(\Gamma^c)$ be the complement of $E(\Gamma)$.  

Since every 3-component sublink $L_i\cup L_j\cup L_k$ is non-trivial it follows that at least one of $e_{ij}, e_{ik}, e_{jk}$ must be in $\Gamma$, and so cannot be in $\Gamma^c$.  That is, $\Gamma^c$ contains no cycle of length 3.  A classical theorem due to Mantel from extremal graph theory (see \cite{Mantel1907} or, for a more modern reference, \cite[Theorem 1.9]{GraphsAndDigraphs}) says that any graph with $n$ vertices and more than $\lfloor n^2/4 \rfloor$ edges contains a cycle of length 3.  Thus, $\Gamma^c$ has at most $\lfloor n^2/4 \rfloor$ edges.  As a consequence, $\Gamma$ must include at least ${n\choose 2} - \lfloor n^2/4 \rfloor$ edges.  As $L$ has vanishing linking number, if $e_{ij}$ is an edge in $\Gamma$ then $S$ includes at least two crossing changes between $L_i$ and $L_j$. Thus, $n_h(L)\ge 2\left({n\choose 2} - \lfloor n^2/4 \rfloor\right)=2\left\lfloor {(n-1)^2}/{4} \right\rfloor$, where the equality follows from a direct case-wise analysis based on  the parity of $n$.  
\end{proof}

The above proof argues that since each 3-component sublink of $L$ is not homotopy trivial, the graph $\Gamma$ must contain certain edges. Our goal now is to strengthen this lower bound to a new bound whose proof instead considers 4-component sublinks.  

\begin{theorem}
\label{thm: large nhl using 4-component sublinks}
For any $n\ge 4$ there is a link with \(n_h(L)
= 2\left\lceil\frac{1}{3}n(n-2)\right\rceil.\)
\end{theorem}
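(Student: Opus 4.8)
The plan is to prove Theorem~\ref{thm: large nhl using 4-component sublinks} by exhibiting an explicit link and matching upper and lower bounds for its homotopy trivializing number. For the construction, I would take a link $L$ with vanishing pairwise linking numbers whose every $4$-component sublink has $n_h=6$; concretely, one can build $L$ as a ``fusion of Borromean rings'' so that every $3$-component sublink is the Borromean rings (hence $\overline\mu_{ijk}=1$ for all triples) and, moreover, every $4$-component sublink satisfies the hypotheses of the first bullet of Theorem~\ref{thm: 4-component sampler} (for instance by also arranging $\overline\mu_{1234}$ and $\overline\mu_{1324}$ to be $1$ on each quadruple while all linking numbers vanish, so that the $\GCD$-divisibility obstructions all fail). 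One clean way to realize such an $L$ is as the closure of the string link given by a product of commutators $[x_{ij}^{-1}\omega_{ijk}^{-1}x_{ij}\omega_{ijk}, \dots]$ ranging over triples, mirroring the bracketed word in the commented-out Example in the excerpt; I would verify the Milnor invariants directly from the Magnus expansion / the classification~\pref{eqn:4-component classification} applied to each $4$-component sublink.

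For the upper bound $n_h(L)\le 2\lceil \tfrac13 n(n-2)\rceil$, I would invoke Theorem~\ref{thm:min_weight_phi_n} in reverse: exhibit an explicit weighted graph $G$ on $\{v_1,\dots,v_n\}$ with nonnegative integer edge weights, every $4$-vertex induced subgraph having total weight $\ge 3$, and total weight exactly $\lceil\tfrac13 n(n-2)\rceil$ (the extremal configuration realizing Theorem~\ref{thm:min_weight_phi_n} — a disjoint union of a few triangles of weight $1$ together with the combinatorics appearing in the commented-out Theorem~\ref{thm: large nhl using 4-component sublinks} statement, namely $\binom{\lceil (n-1)/3\rceil}{2}$ edges of weight $2$ plus further weight-$1$ edges arranged on roughly $n/3$ blocks). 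Then, for each edge $e_{ij}$ of weight $w_{ij}$, I perform $2w_{ij}$ crossing changes between $L_i$ and $L_j$ chosen so that the resulting link has every $3$-component and $4$-component sublink homotopy trivial: since every triple $\{i,j,k\}$ and every quadruple is "hit enough" by the graph (the $4$-vertex weight condition translates into enough crossing changes among the relevant components), and since $L$ is algebraically slice with all higher Milnor data supported on Borromean pieces, these crossing changes can be organized (using Proposition~\ref{prop: algegraic translation} and the commutator identities of Proposition~\ref{prop: basics of nhl}) to kill all the $\overline\mu_I$'s. This gives $n_h(L)\le 2\sum_{i<j} w_{ij} = 2\lceil\tfrac13 n(n-2)\rceil$.

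For the lower bound, I would argue exactly as in the proof of Proposition~\ref{prop: lower bound warmup}, but with $4$-component sublinks in place of $3$-component sublinks. Given any sequence $S$ of crossing changes reducing $L$ to a homotopy trivial link, form the weighted graph $G$ on $\{v_1,\dots,v_n\}$ whose edge $e_{ij}$ carries weight equal to half the number of crossing changes in $S$ between $L_i$ and $L_j$; this is a nonnegative integer because $\lk(L_i,L_j)=0$ forces an even number of such changes. Since $n_h(L_i\cup L_j\cup L_k\cup L_\ell)=6$ for every quadruple, the restriction of $S$ to those four components must consist of at least $6$ crossing changes, i.e. the induced subgraph on $\{v_i,v_j,v_k,v_\ell\}$ has total weight at least $3$. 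Theorem~\ref{thm:min_weight_phi_n} then gives total weight $\ge \lceil\tfrac13 n(n-2)\rceil$, hence $|S|\ge 2\lceil\tfrac13 n(n-2)\rceil$, so $n_h(L)\ge 2\lceil\tfrac13 n(n-2)\rceil$.

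The main obstacle I anticipate is the upper bound: producing a concrete link $L$ whose every $4$-component sublink genuinely has $n_h=6$ (not merely $\ge$ some smaller value) requires checking the somewhat delicate $\GCD$-divisibility conditions of Theorem~\ref{thm:nhl for linking number zero} on each quadruple simultaneously, and then verifying that the crossing changes dictated by the extremal graph actually suffice to trivialize $L$ up to homotopy — i.e. that the "local" trivializations on triples and quadruples are compatible globally. Handling this cleanly will likely require working in $\H(n)$ via the Habegger--Lin decomposition and the representation result Theorem~\ref{hackey idea}, pushing the commutator bookkeeping through carefully so that the total crossing-change count never exceeds $2\sum w_{ij}$; the link-theoretic realization and the graph-theoretic extremal count must be matched exactly, which is where the bulk of the work lies.
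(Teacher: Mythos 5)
Your overall architecture matches the paper's: build an $n$-component link with vanishing linking numbers whose every $4$-component sublink has homotopy trivializing number $6$, then convert any trivializing sequence into a weighted graph and invoke Theorem~\ref{thm:min_weight_phi_n} for the lower bound. That second half of your argument is exactly the paper's proof. However, your construction has a genuine error. If every $3$-component sublink is the Borromean rings, then $\mu_{ijk}=1$ for all triples, so $\GCD(\mu_{123},\mu_{124},\mu_{134},\mu_{234})=1$ on each quadruple. The first bullet of Theorem~\ref{thm: 4-component sampler} (equivalently, the $n_h(L)=6$ row of Theorem~\ref{thm:nhl for linking number zero}) requires that $\overline\mu_{1234}$, $\overline\mu_{1324}$, and their sum \emph{not} be multiples of this $\GCD$ --- which is impossible when the $\GCD$ is $1$ (indeed, the length-$4$ invariants are then only defined modulo $1$, so ``arranging them to be $1$'' carries no content). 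Concretely, with $a_{ijk}=1$ for all triples one has $a_{1324}\in(a_{123},a_{124},a_{134},a_{234})=\Z$, so each quadruple sublink has $n_h=4$, not $6$; your graph then only satisfies the weight-$\ge 2$ condition on $4$-vertex subgraphs and the resulting lower bound falls short of $2\left\lceil\tfrac13 n(n-2)\right\rceil$. The paper avoids this by taking the triple invariants equal to $3$ and the quadruple ones equal to $1$: its $4$-component model is the closure of $x_{123}^3x_{124}^3x_{134}^3x_{234}^3x_{1234}x_{1324}$, so the $\GCD$ is $3$ while $1$, $1$, and $2$ are not multiples of $3$, giving $n_h=6$; the $n$-component link is then $\widehat{T}$ with $T=\prod x_{ijk}^3\prod x_{ijkl}\prod x_{ikjl}$, every $4$-component sublink of which is that model.

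A secondary issue: your upper-bound step (performing $2w_{ij}$ crossing changes along an extremal graph and asserting they ``can be organized to kill all the $\overline\mu_I$'s'') is not an argument --- nothing in Proposition~\ref{prop: algegraic translation} or Proposition~\ref{prop: basics of nhl} guarantees that a trivializing sequence of that exact length exists for the chosen link. Note that the paper's own proof of this theorem only establishes the inequality $n_h(L)\ge 2\left\lceil\tfrac13 n(n-2)\right\rceil$, which is all that is used to conclude $C_n\ge 2\left\lceil\tfrac13 n(n-2)\right\rceil$ in Theorem~\ref{thm: main}; so if you aim at the equality you would need a genuine upper-bound construction for the specific link, and if you aim only at the bound needed for $C_n$, you should drop that paragraph and fix the construction as above.
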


We put off the proof until the end of the section once we have built a bit more machinery.  In order to produce the needed examples, we start by proving the existence of an $n$-component  link $L$ whose every 4-component sublink, $J$ has $n_h(J) = C_4=6$.  We begin with the choice of $J$.

\begin{example}
    \label{ex: large nhl 4-component}
    Consider the string link 
    \[T=x^3_{123} x^3_{124} x^3_{134} x^3_{234}  x_{1234}x_{1324},\]
    which is depicted in Figure \ref{fig:4_comp_J}. Note the link  $T$ has vanishing pairwise linking numbers and $a_{123}=3$, $a_{124}=3$, $a_{134}=3$, $a_{234}=3$, $a_{1234}=1$, and $a_{1324}=1$. Therefore, by Theorem \ref{thm:nhl for linking number zero}, if $J=\widehat{T}$ then $n_h(J)=6.$

    \begin{figure}
        \centering
        \begin{tikzpicture}
            \draw (0, 0) node[inner sep=0] {\includegraphics[width=0.75\linewidth]{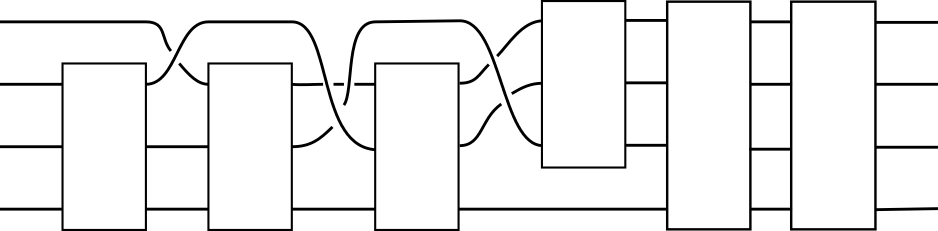}};
            \draw (-4.8, -0.35) node {$x_{123}^3$};
            \draw (-2.85, -0.35) node {$x_{123}^3$};
            \draw (-0.7, -0.35) node {$x_{123}^3$};
            \draw (1.55, 0.35) node {$x_{123}^3$};
            \draw (3.2, 0) node {$x_{1234}$};
            \draw (4.85, 0) node {$x_{1324}$};
            \draw (-6.5, -1.2) node {$J_1$};
            \draw (-6.5, -0.4) node {$J_2$};
            \draw (-6.5, 0.4) node {$J_3$};
            \draw (-6.5, 1.2) node {$J_4$};
\end{tikzpicture}
        \caption{A string link whose closure $J$ has $n_h(J) = 6$.}
        \label{fig:4_comp_J}
    \end{figure}
\end{example}

\begin{proposition}\label{prop: example with good 4-component sublinks}
For any $n\ge4$ there is an $n$-component link $L$ with pairwise linking number zero and whose every 4-component sublink $J$ has $n_h(J)=6$.

\end{proposition}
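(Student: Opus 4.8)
The plan is to build $L$ explicitly as the closure of a string link assembled from copies of the building block $T$ from Example~\ref{ex: large nhl 4-component}, one copy for each $4$-element subset of the index set $\{1,\dots,n\}$. Concretely, for each $\{i,j,k,\ell\}\subseteq\{1,\dots,n\}$ with $i<j<k<\ell$, take the string link $T_{ijk\ell}$ obtained from $T$ by relabelling its four strands as the $i,j,k,\ell$ strands of an $n$-component string link (leaving all other strands straight), and set $S=\prod_{i<j<k<\ell} T_{ijk\ell}$ with $L=\widehat S$. First I would check that $L$ has vanishing pairwise linking numbers: each factor $T_{ijk\ell}$ lives in the lower central series term $\H(n)_2$ (indeed each is a product of weight-$2$ and weight-$3$ commutators of the $x_{pq}$), so it contributes nothing to any $\lk(L_p,L_q)$, which is read off from the abelianization $\H(n)/\H(n)_2$.

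Next I would compute the $4$-component sublinks. Fix $\{i,j,k,\ell\}$ and let $J$ be the corresponding sublink of $L$; it is the closure of the image of $S$ under the homomorphism $\H(n)\to\H(4)$ that deletes all strands other than $i,j,k,\ell$. Every factor $T_{pqrs}$ with $\{p,q,r,s\}\neq\{i,j,k,\ell\}$ is killed by this deletion map, because deleting even one strand of a copy of $T$ leaves a string link supported on $\le 3$ strands that is already homotopy trivial — $T$ restricted to any $3$ of its strands is the closure of $x_{abc}^3$ composed across the two relevant $x_{abc}$-blocks, but more carefully one sees directly from the Milnor-invariant bookkeeping that deleting a strand of $T$ kills all of its surviving $\bar\mu$-invariants, hence it is homotopy trivial by the Habegger--Lin classification (the relevant $4$-component version is \pref{eqn:4-component classification}). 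Thus the deletion map sends $S$ to exactly $T$ (up to reindexing), so $J=\widehat T$ and $n_h(J)=6$ by Example~\ref{ex: large nhl 4-component}, which invoked Theorem~\ref{thm:nhl for linking number zero}.

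The main obstacle I anticipate is the verification that the various factors $T_{pqrs}$ do not interfere with one another — i.e. that after deleting strands we really recover $T$ on the nose (in $\H(4)$) rather than $T$ multiplied by a pile of higher commutators coming from cross-terms between different $T_{pqrs}$. There are two ways to handle this. One is to arrange the $T_{pqrs}$ to be supported in disjoint balls along the $[0,1]$ direction (stack them), so that $S=T_{1234}*T_{1235}*\cdots$ literally as a product in the group $\H(n)$, and then note that the deletion homomorphism is a group homomorphism sending each factor either to $T$ or to the identity; this reduces the whole issue to the single-block computation "deleting a strand of $T$ yields a homotopy-trivial link," which is the genuine content. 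The second, if one wants to avoid even that, is to observe we only need $n_h(J)=6$ and $n_h(J)\le 6$ is automatic from the Corollary to Theorem~\ref{upper bound theorem main}; so it suffices to show $n_h(J)\ge 6$, and for that it is enough that the six Milnor invariants $a_{123},a_{124},a_{134},a_{234}$ are all nonzero and $a_{1234}$, $a_{1324}$, $a_{1234}+a_{1324}$ are all not multiples of $\GCD(a_{123},a_{124},a_{134},a_{234})$, which for $J=\widehat{T_{ijk\ell}}$ read off as $3,3,3,3,1,1$ — properties stable under adding any element of $\H(4)_4=1$ and in fact insensitive to the precise effect of cross-terms since those only affect the invariants modulo lower-weight data in a controlled way. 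I would carry out the stacking approach as the clean option, reducing everything to the single-block lemma, and record the Milnor-invariant computation for $T$ as the one genuinely new verification.
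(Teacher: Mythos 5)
There is a genuine gap in the step that makes your whole reduction work: the claim that ``deleting a strand of $T$ kills all of its surviving $\bar\mu$-invariants, hence it is homotopy trivial'' is false. Deleting the fourth strand of the block $T=x_{123}^3x_{124}^3x_{134}^3x_{234}^3x_{1234}x_{1324}$ kills every factor except $x_{123}^3$, so the image in $\H(3)$ is $x_{123}^3$, which has $\mu_{123}=\pm3\neq0$ and is certainly not homotopy trivial (and similarly for the other strands). Consequently, in your stacked product $S=\prod_{p<q<r<s}T_{pqrs}$ the deletion homomorphism onto the strands $\{i,j,k,\ell\}$ does \emph{not} annihilate the factors $T_{pqrs}$ whose index set meets $\{i,j,k,\ell\}$ in exactly three elements: each such factor contributes $x_{abc}^3$, where $\{a,b,c\}$ is the common triple, and there are $n-4$ such factors for each of the four triples. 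Since $\H(4)_2$ is abelian, the sublink you actually get is (after relabelling) the closure of $x_{123}^{3(n-3)}x_{124}^{3(n-3)}x_{134}^{3(n-3)}x_{234}^{3(n-3)}x_{1234}x_{1324}$, not $\widehat T$; so the identification $J=\widehat T$ fails, and the invariants you ``read off as $3,3,3,3,1,1$'' are in fact $3(n-3),3(n-3),3(n-3),3(n-3),1,1$.

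The construction itself is salvageable, because $1$, $1$, and $1+1=2$ are still not multiples of $\GCD$ of the $a_{ijk}=3(n-3)\ge 3$, so Theorem~\ref{thm:nhl for linking number zero} still yields $n_h(J)=6$ for the corrected sublink; but that computation has to be done honestly rather than via the false ``blocks with smaller support die'' lemma. The paper sidesteps the multiplicity problem at the level of the construction: it takes the global word $\Prod_{i<j<k}x_{ijk}^3\,\Prod_{i<j<k<l}x_{ijkl}\,\Prod_{i<j<k<l}x_{ikjl}$, in which each triple commutator appears exactly once, so that every $4$-component sublink is literally the string link of Example~\ref{ex: large nhl 4-component} and no cross-term or multiplicity analysis is needed. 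If you want to keep your ``one block per $4$-subset'' construction, you must replace the incorrect deletion lemma by the corrected count of surviving $x_{abc}^3$ factors and then reapply Theorem~\ref{thm:nhl for linking number zero} (the upper bound $n_h(J)\le 6$ does follow from Theorem~\ref{upper bound theorem main}, as you note).
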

\begin{proof}
We require an $n$-component string link $T$ whose every $4$-component sublink is the string link of Example \ref{ex: large nhl 4-component} above.  To be precise, let
\[T = \Prod_{1\leq i<j<k\leq n} x^3_{ijk} \Prod_{1\leq i<j<k<l\leq n}  x_{ijkl}\Prod_{1\leq i<j<k<l\leq n} x_{ikjl}.\]
Then every 4-component sublink of $T$ is the link $J$ of Example~\ref{ex: large nhl 4-component}.  Therefore $\widehat{T}$ is the desired link.
\end{proof}

Consider now the $n$-component link $L$ of Proposition~\ref{prop: example with good 4-component sublinks}.  Let $S$ be any sequence of crossing changes reducing $L$ to a homotopy trivial link and realizing $n_h(L)$.  Now form a weighted graph $\Gamma$ with vertices $v_1, \dots, v_n$.  We assign the edge $e_{ij}$ from $v_i$ to $v_j$ a weight $\wt(e_{ij})$ equal to half of the number of crossing changes between the components $L_i$ and $L_j$ in $S$.  Recall that since $L$ has vanishing pairwise linking numbers, this number of crossing changes must be even.  Then, we define the \emph{total weight} of the graph $\Gamma$ to be $\wt(\Gamma) = \Sum_{i,j} \wt(e_{ij})$. 

Each 4-component sublink of $L$ has homotopy trivializing number equal to  $n_h(J)=6$.  Thus, the subgraph of $\Gamma$ spanned by any 4-component sublink has total weight at least $3$. The following extremal graph theory result, which is slightly stronger than that stated in the introduction as Theorem~\ref{thm:min_weight_phi_n}, will now imply Theorem~\ref{thm: large nhl using 4-component sublinks}.  

\begin{theorem}\label{thm: Graph theorem}
Define $\Phi_n$ to be the set of all graphs with $n$ vertices and non-negative integer weights on their edges which satisfy that for every $G\in \Phi_n$ each subgraph of $G$ spanned by at least 4 vertices has total weight at least 3.  Let $\phi_n$ denote the minimum total weight among all graphs in $\Phi_n$. For $n\geq 4$, $$\phi_n 
=\left\lceil\frac{1}{3}n(n-2)\right\rceil.$$ 
\end{theorem}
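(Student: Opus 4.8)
The plan is to prove the two inequalities $\phi_n \le \lceil \tfrac13 n(n-2)\rceil$ and $\phi_n \ge \lceil \tfrac13 n(n-2)\rceil$ separately, treating the upper bound as a construction and the lower bound as the real work.

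\textbf{Upper bound (construction).} First I would exhibit an explicit weighted graph $G$ on $n$ vertices of total weight $\lceil \tfrac13 n(n-2)\rceil$ in which every $4$-vertex subgraph has weight at least $3$. The natural guess, motivated by the link $L^n$ built from fused Borromean rings, is to partition the $n$ vertices into three roughly-equal classes $V_1, V_2, V_3$ of sizes $n_1,n_2,n_3$ with $|n_i-n_j|\le 1$, and put weight $1$ on every edge joining two \emph{distinct} classes (weight $0$ within a class). Then any four vertices either meet all three classes --- in which case among them there are at least $3$ cross-class pairs --- or they occupy only two classes, say with $a$ in one and $4-a$ in the other, giving $a(4-a)\ge 3$ cross-class edges for $a\in\{1,2,3\}$; the case $a=0$ (all four in one class) cannot occur once $\max n_i\le 3$, which forces... actually for general $n$ one checks $a\ge 1$ is automatic unless a class has $\ge 4$ vertices, so a small extra adjustment (adding a few internal edges when some $n_i\ge 4$, which never happens for the balanced partition once $n\ge 4$... in fact $n_i\le \lceil n/3\rceil$ can exceed $3$) is needed. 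I would instead keep the complete tripartite graph but verify directly that its edge count is $n_1n_2+n_1n_3+n_2n_3 = \binom n2 - \binom{n_1}2-\binom{n_2}2-\binom{n_3}2$, and a parity/casework computation in $n \bmod 3$ shows this equals $\lceil \tfrac13 n(n-2)\rceil$. The failure mode ``four vertices in one class'' I would handle by noting that a class of size $s\ge 4$ must itself carry weight $\ge 3$ on its internal subgraph by induction/hypothesis, so the truly extremal graph is a recursively-tripartite one; but since the balanced classes have size $\le \lceil n/3\rceil$ and $\lceil n/3\rceil \le 3$ fails for $n\ge 9$, I expect the clean statement is that the \emph{iterated} balanced tripartition achieves the bound, and its total weight still simplifies to $\lceil\tfrac13 n(n-2)\rceil$ by an easy recursion $\phi_n \le \phi_{n_1}+\phi_{n_2}+\phi_{n_3} + (n_1n_2+n_1n_3+n_2n_3)$.

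\textbf{Lower bound.} This is the main obstacle. I would argue by induction on $n$, the base case $n=4$ being the hypothesis ($\phi_4 = 3 = \lceil \tfrac13\cdot 4\cdot 2\rceil$). For the inductive step, let $G\in\Phi_n$ have minimum weight $W$. The idea is an averaging/deletion argument: for each vertex $v$, let $d(v)$ be the total weight of edges incident to $v$, so $\sum_v d(v) = 2W$. Deleting $v$ gives a graph on $n-1$ vertices still in $\Phi_{n-1}$ (any $4$-subset of the remaining vertices is a $4$-subset of the original), hence of weight $\ge \phi_{n-1}$, so $W - d(v) \ge \phi_{n-1}$, i.e. $d(v) \le W - \phi_{n-1}$ for every $v$. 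Summing, $2W = \sum d(v) \le n(W-\phi_{n-1})$, which rearranges to $W \ge \tfrac{n}{n-2}\phi_{n-1}$. With $\phi_{n-1}\ge \lceil\tfrac13(n-1)(n-3)\rceil \ge \tfrac13(n-1)(n-3)$ this gives $W \ge \tfrac{n(n-1)(n-3)}{3(n-2)}$, which is unfortunately slightly \emph{less} than $\tfrac13 n(n-2)$ (the ratio is $\tfrac{(n-1)(n-3)}{(n-2)^2} < 1$). So the naive one-vertex deletion loses a bit, and the hard part will be recovering the deficit.

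\textbf{Closing the gap.} To fix this I would delete pairs or triples of vertices and use convexity, or better, delete a \emph{lightest} vertex rather than averaging. If $v$ minimizes $d(v)$ then $d(v)\le 2W/n$, and $G\setminus v \in \Phi_{n-1}$ forces $W - d(v)\ge \phi_{n-1}$; combining, $\phi_{n-1}\le W - d(v)$ but also we want a lower bound on $d(v)$ from the constraint that $v$ together with any three other vertices has weight $\ge 3$. Here is the key local observation: fix $v$; for \emph{every} triple $\{a,b,c\}$ of the other $n-1$ vertices, $\wt(va)+\wt(vb)+\wt(vc) + \big(\wt(ab)+\wt(ac)+\wt(bc)\big) \ge 3$. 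Averaging this inequality over all $\binom{n-1}3$ triples expresses $3\binom{n-2}2 d(v)/(n-1) + (\text{stuff involving }W\setminus v)$... this will relate $d(v)$ to $W$ and $\phi_{n-1}$ more tightly. I expect the correct move is: apply the induction hypothesis to \emph{all} $n$ of the $(n-1)$-vertex deleted subgraphs simultaneously, obtaining $\sum_v(W - d(v)) \ge n\phi_{n-1}$ as before, but then also feed in a \emph{second} family of inequalities --- the weight of $G$ restricted to any $(n-1)$-set, or a direct count of the ``every 4-set $\ge 3$'' conditions, $\binom n4 \cdot 3 \le \sum_{4\text{-sets}} \wt = \binom{n-2}2 W$ --- giving $W \ge 3\binom n4 / \binom{n-2}2 = \tfrac{n(n-1)}{4}$. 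Comparing $\tfrac{n(n-1)}4$ with $\tfrac13 n(n-2)$: the ratio is $\tfrac{3(n-1)}{4(n-2)}$, which is $<1$ for $n\ge 5$, so this bound is also too weak alone. The resolution must combine the recursive bound with the counting bound, or use a sharper structural result (e.g. that an extremal $G$ is ``locally tripartite''). I would therefore spend the bulk of the effort showing that any minimum-weight $G$ can be modified, without increasing weight, so that some vertex has $d(v) = $ exactly the value in the tripartite example, then invoke induction cleanly; the parity-dependent ceiling function means I would split into cases $n\equiv 0,1,2\pmod 3$ at the end to match $\lceil\tfrac13 n(n-2)\rceil$ exactly.
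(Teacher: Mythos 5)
Your upper-bound construction does not achieve the claimed value, so the first half of the argument already fails. For the balanced complete tripartite graph with weight $1$ on cross-class edges, the total weight is $n_1n_2+n_1n_3+n_2n_3$, which is \emph{not} $\left\lceil\frac13 n(n-2)\right\rceil$: for $n=3m$ it equals $3m^2$ while the target is $3m^2-2m$; concretely, for $n=4$ your graph has weight $5$ versus the required $3$, and for $n=6$ it has weight $12$ versus $8$. The surplus is linear in $n$, and no iterated tripartition can repair it, since the top-level cross edges alone already exceed the target. The extremal graph has a different shape: take a class of $k=\left\lceil\frac{n-1}{3}\right\rceil$ vertices whose internal edges all carry weight $2$, a class of $\ell=n-k$ vertices whose internal edges all carry weight $1$, and weight $0$ between the classes; every $4$-subset then has weight at least $3$, and the total weight $2\binom{k}{2}+\binom{\ell}{2}$ equals $\left\lceil\frac13 n(n-2)\right\rceil$ after a check mod $3$.

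For the lower bound you correctly diagnose that both the deletion/averaging bound $W\ge\frac{n}{n-2}\phi_{n-1}$ and the $4$-subset counting bound $W\ge 3\binom{n}{4}/\binom{n-2}{2}$ are too weak, but the proposal stops at the point where the real work begins: the promised ``combination'' or structural modification is never specified, so the lower bound is not proved. The missing ingredient is a degree lemma: every $G\in\Phi_n$ with $n\ge 5$ has a vertex $v$ with $d(v)\ge\frac{2}{3}n-\frac{4}{3}$. Deleting that \emph{heavy} vertex (not a lightest one, as you propose) gives $\wt(G)=\wt(G-v)+d(v)\ge\phi_{n-1}+\left\lceil\frac{2n-4}{3}\right\rceil$, which equals $\left\lceil\frac13 n(n-2)\right\rceil$ by a case check mod $3$, closing the induction. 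The lemma itself is proved by analyzing the zero-weight edges: if some triple is pairwise joined by weight-$0$ edges, the $4$-vertex constraint forces their degree sum to be at least $3(n-3)$; otherwise, for each $v$ the set $N_v$ of weight-$0$ neighbors has size greater than $\frac13 n+\frac13$, all pairs inside $N_v$ have weight at least $1$, one finds a weight-$1$ edge $pq$ in $N_v$, and applying the constraint to $\{p,q,u,v\}$ forces $\wt(uv)\ge 2$ for every $u\in N_p\cap N_q\setminus\{v\}$, whence $d(v)\ge|N_p|+|N_q|-2>\frac23 n-\frac43$, contradicting the assumption that all degrees are small. None of your averaging steps yields a bound of this strength, so as written both directions of the theorem remain open in your argument.
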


We now gather together what we have to prove Theorem~\ref{thm:min_weight_phi_n}.

\begin{proof}[Proof of Theorem~\ref{thm: large nhl using 4-component sublinks}]
Let $L$ be the $n$-component link of  Proposition \ref{prop: example with good 4-component sublinks}, and $S$ be be any sequence of crossing changes transforming $L$ to a homotopy trivial link.  Let $\Gamma$ be the weighted graph on vertices $v_1,\dots, v_n$ with weights given by setting $\wt(e_{ij})$ equal to half of the number of crossing changes in $S$ between $L_i$ and $L_j$.  Then $\Gamma\in \Phi_n$ and so $\wt(\Gamma) \ge \left\lceil\frac{1}{3}n(n-2)\right\rceil$.  The total weight of $\Gamma$ is equal to half the number of crossing changes in $S$.  Thus $n_h(L)\ge 2\left\lceil\frac{1}{3}n(n-2)\right\rceil$, as we claimed.
\end{proof}

Before giving an inductive proof of Theorem~\ref{thm: Graph theorem}, similar to the proof of Mantel's theorem in \cite{mantel}, we first introduce the following lemma, which is key to our inductive step.  For any vertex $v$ in a weighted graph $G$, $d(v) = \Sum_{u\in V(G)\setminus \{v\}} \wt(uv)$
is the sum of the weights of the edges incident to $v$.  

\begin{lemma}\label{lem:vertex_degree_5comp}
 Let $n\ge 5$ and $G\in \Phi_n$. Then there is a vertex $v$ for which $d(v)\ge \frac{2}{3}n-\frac{4}{3}$.
\end{lemma}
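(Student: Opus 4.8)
\textbf{Proof proposal for Lemma~\ref{lem:vertex_degree_5comp}.}

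The plan is to argue by contradiction using a counting (averaging) argument over all $4$-element subsets of vertices. Suppose $G\in\Phi_n$ with $n\ge 5$ has $d(v)<\tfrac23 n-\tfrac43$ for every vertex $v$; since all weights are non-negative integers, $d(v)$ is an integer, so this forces $d(v)\le \left\lceil\tfrac23 n-\tfrac43\right\rceil-1$. I would first record the elementary identity $\sum_{v}d(v)=2\,\wt(G)$, so that the degree hypothesis yields an upper bound $\wt(G)\le \tfrac12 n\bigl(\tfrac23 n-\tfrac43\bigr)=\tfrac13 n(n-2)$ on the total weight — which, notably, is exactly the target value in Theorem~\ref{thm: Graph theorem}. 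So the degree bound by itself is consistent with the total-weight constraint; the contradiction must come from the \emph{local} $4$-vertex constraint, not just the global one.

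The key step is a double count. For each $4$-subset $Q$ of the vertex set, let $\wt(G[Q])$ be the total weight of the spanned subgraph; by hypothesis $\wt(G[Q])\ge 3$. Summing over all $\binom n4$ such $Q$ and interchanging the order of summation, each edge $e=uv$ is counted once for every $Q$ containing both $u$ and $v$, i.e.\ $\binom{n-2}{2}$ times. Hence
\[
\binom{n-2}{2}\,\wt(G)=\sum_{|Q|=4}\wt(G[Q])\ge 3\binom n4.
\]
This gives the lower bound $\wt(G)\ge 3\binom n4\big/\binom{n-2}{2}=\tfrac{n(n-1)}{4}$. Comparing with the upper bound $\wt(G)\le\tfrac13 n(n-2)$ obtained from the (strict, hence rounded) degree assumption, I would check that $\tfrac{n(n-1)}{4}>\tfrac13 n(n-2)$ is equivalent to $3(n-1)>4(n-2)$, i.e.\ $n<5$ — so for $n\ge 5$ this is \emph{not} yet a contradiction and the crude averaging is too weak. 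The fix is to use that the degree bound is strict and integer-valued: set $D:=\left\lceil\tfrac23 n-\tfrac43\right\rceil-1$, so $\wt(G)\le \tfrac12 nD$, and then compare $\tfrac12 nD$ against $\tfrac{n(n-1)}{4}$, i.e.\ check whether $2D<n-1$. I expect this inequality to hold — giving the desired contradiction — precisely because $\tfrac23 n-\tfrac43$ is the exact threshold, so dropping below it and rounding down loses enough to break the $4$-vertex budget; the main obstacle will be handling the ceiling/floor carefully according to $n\bmod 3$ so that the strict inequality $2D<n-1$ genuinely holds for all $n\ge 5$ (and verifying that small residual cases, e.g.\ $n=5,6,7$, are not exceptions).

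An alternative, perhaps cleaner, route avoids rounding subtleties: argue directly that if every vertex had degree $<\tfrac23 n-\tfrac43$ then one could delete a minimum-degree vertex $v$ and induct — but that is essentially the structure of the proof of Theorem~\ref{thm: Graph theorem} itself, so within this lemma I would instead push the double-count refinement. Concretely, one can sharpen the edge-counting by noting that in the sum $\sum_{|Q|=4}\wt(G[Q])$ one may fix a vertex $v$ of minimum degree; the $\binom{n-1}{3}$ subsets containing $v$ contribute $\binom{n-3}{2}d(v)+\binom{n-1}{3}\cdot(\text{avg weight not touching }v)$, and combining this with the global bound isolates $d(v)$ on one side. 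I would carry out this bookkeeping to extract $d(v)\ge\tfrac23 n-\tfrac43$ for the minimum-degree vertex directly, which is exactly the assertion (for that $v$, hence for \emph{some} $v$). The hard part throughout is purely the arithmetic of making the $4$-vertex weight budget of $3$ interact tightly with $n$; once the right linear combination of the inequalities $\wt(G[Q])\ge 3$ is found, the bound $d(v)\ge\tfrac23 n-\tfrac43$ should drop out.
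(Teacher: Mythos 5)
Your proposal has a genuine gap: neither of the two routes you sketch can be completed. The first route (global double count plus integrality rounding) fails quantitatively. The averaging inequality gives only $\wt(G)\ge 3\binom n4\big/\binom{n-2}{2}=\tfrac{n(n-1)}{4}$, while the assumed degree bound, even after rounding to $D=\left\lceil\tfrac23 n-\tfrac43\right\rceil-1$, allows $\wt(G)\le\tfrac12 nD\approx\tfrac13 n(n-2)$; the shortfall is about $\tfrac{n(n-5)}{12}$, growing quadratically, whereas rounding recovers only $O(n)$. Concretely, for $n=12$ one has $D=6$, so $\tfrac12 nD=36\ge 33=\tfrac{n(n-1)}{4}$ and there is no contradiction, and the situation only worsens as $n$ grows (your own check $2D<n-1$ already fails at $n=7$). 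There is a structural reason no refinement of this kind can work: if one allows real weights, the statement of the lemma is simply false for $n\ge 6$ — give every edge weight $\tfrac12$, so every $4$-vertex subgraph has weight exactly $3$ but every degree is $\tfrac{n-1}{2}<\tfrac23 n-\tfrac43$. Hence any valid proof must use integrality of the weights in an essential, structural way, not merely to round a degree bound; linear combinations of the $4$-set inequalities together with the degree-sum identity are valid for fractional weights and therefore cannot suffice. Your second route is aimed at a false statement: you propose to prove the bound for a \emph{minimum}-degree vertex, but the lemma is a maximum-degree statement. For a counterexample, attach a new vertex $u$ to a complete graph on $n-1$ vertices with all weights $1$, giving every edge at $u$ weight $0$; every $4$-vertex subgraph still has weight at least $3$, so this graph lies in $\Phi_n$, yet $d(u)=0$.

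For comparison, the paper's proof is structural and Mantel-like rather than an averaging argument: assuming all degrees are below the threshold, it studies the zero-weight neighborhoods $N_v=\{x\mid \wt(xv)=0\}$, shows $|N_v|>\tfrac13 n+\tfrac13$, uses integrality to find an edge of weight exactly $1$ inside $N_v$ (after disposing of the case of a zero-weight triangle), and then feeds the single $4$-set $\{p,q,u,v\}$ into the weight-$3$ constraint to force weight-$2$ edges from $v$ into $N_p\cap N_q$, yielding $d(v)\ge|N_p|+|N_q|-2>\tfrac23 n-\tfrac43$, a contradiction. Any repair of your approach would need to exploit the zero-weight edge structure in a similarly explicit way.
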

    \begin{proof}[Proof of Lemma~\ref{lem:vertex_degree_5comp}]
    We open with a special case.  Suppose that there are three vertices $p,q,r$ with $\wt(pq)=\wt(pr)=\wt(qr)=0$.  It follows then that for any $s\in V(G)\setminus\{p,q,r\}$, $\wt(ps)+\wt(qs)+\wt(rs)\ge 3$, and so 
    $$d(p)+d(q)+d(r) = \sum_{s\notin \{p,q,r\}}\wt(ps)+\wt(qs)+\wt(rs)\ge 3(n-3).
    $$
    In particular, then, the average of $d(p), d(q), d(r)$ is at least $n-3$ which is at least as large as $\frac{2}{3}n-\frac{4}{3}$ as long as $n\ge 5$.  
    Thus, we may assume that no such triple $\{p,q,r\}$ of vertices connected by weight 0 edges exists.

    Suppose for the sake of contradiction that $d(v)< \frac{2}{3}n-\frac{4}{3}$ for every vertex $v$.  For any  vertex $v$, set
    $$N_v=\{x\in V(G)\mid \wt(xv)=0\}.$$
    Since $d(v)< \frac{2}{3}n-\frac{4}{3}$, it follows that $|N_v|\ge (n-1)-d(v)> \frac{1}{3}n+\frac{1}{3}$. Finally, note that if $x,y\in N_v$ and $\wt(xy)=0$, then $v,x,y$ spans a triangle whose every edge has weight 0, putting us in the situation addressed at the start of the proof.  Thus,  $\wt(xy)\ge 1$ for every $x,y\in N_v$.

    We claim that there must exist 
    some $x,y\in N_v$ with $\wt(xy)=1$.
    Indeed, suppose $\wt(xy)\ge 2$ for every $x,y\in N_v$. For any $x\in N_v$, if we sum up only the weights of edges between $x$ and elements of $N_v$ we get $d(x)\geq\sum_{y\in N_v\setminus \{x\}} \wt(xy)\ge 2(|N_v|-1)>\frac{2}{3}n-\frac{4}{3}$, contradicting the assumption that  $d(x)<\frac{2}{3}n-\frac{4}{3}$ for every vertex $x$.  
    
    Thus, there exists some $p,q\in N_v$ such that $\wt(pq)=1$. Notice $v\in N_p\cap N_q$.   If $u\in N_p\cap N_q\setminus \{v\}$,  consider
    the graph spanned by $p,q,u,v$ to see that $$3\le \wt(p,q)+\wt(v,u)+\wt(v,p)+\wt(u,p)+\wt(v,q)+\wt(u,q)=1+\wt(u,v)$$ and hence $\wt(u,v)\ge 2$.

    In Figure \ref{fig:graph_problem}, we summarize what we have shown above. In particular, fix some vertex $v$.  There are vertices $p,q\in N_v$ with $\wt(pq)=1$.  For any $u\in N_p\cap N_q\setminus \{v\}$,  $\wt(uv)\geq 2$.  Additionally, by the same argument we used for $N_v$, for any $w\in N_p\cup N_q$, $\wt(wv)\ge 1$. Thus, 
    $$d(v)\ge |N_p\cup N_q\setminus (N_p\cap N_q)|+2|N_p\cap N_q\setminus \{v\}|=|N_p|+|N_q|-2.$$
    
    Moreover, since by the same argument we applied to $|N_v|$, we also have $|N_p|>\frac{1}{3}n+\frac{1}{3}$ and $|N_q|>\frac{1}{3}n+\frac{1}{3}$, hence we may we conclude that 
    $$d(v)\ge |N_p|+|N_q|-2> \frac{2}{3}n-\frac{4}{3}.$$
    This contradicts the assumption that $d(v)\le \frac{2}{3}n-\frac{4}{3}$ for every vertex $v$, completing the proof.  
\end{proof}

\begin{figure}
    \centering
    \begin{tikzpicture}
    \node (Npq) at (0, 2) {$N_p\cap N_q$};
    \node (Nq) at (-1.5, 0.5) {$N_q$};
    \node (Np) at (1.5, 0.5) {$N_p$};
    \node (u) at (0, 1.5) {$u$};
    \node (v) at (0, 0) {$v$};
    \draw [dashed, gray, right] (u) -- (v) node [midway] {$\geq 2$};
    \node (w1) at (-1.8, -0.5) {$w_q$};
    \node (w2) at (1.8, -0.5) {$w_p$};
    \draw [dashed, gray, below] (w1) -- (v) node [midway] {$\geq 1$};
    \draw [dashed, gray, below] (w2) -- (v) node [midway] {$\geq 1$};
    \node (p) at (-1.2, -2) {$p$};
    \node (q) at (1.2, -2) {$q$};
    \draw [dashed, gray, above] (p) -- (q) node [midway] {$1$};
    \node (Nv) at (0, -2.5) {$N_v$};
    \draw (2.6,-2.8) arc[start angle=30, end angle=150,radius=3cm];
    \draw (0.6,2.5) arc[start angle=30, end angle=-120,radius=2.5cm];
    \draw (-0.6,2.5) arc[start angle=150, end angle=330,radius=2.5cm];
\end{tikzpicture}
    \caption{An edge $(p,q)$ of weight 1 in $N_v$ and minimum weights between vertices in $N_p$ and $N_q$ with a vertex $v\in N_p\cap N_q$.}
    \label{fig:graph_problem}
\end{figure}
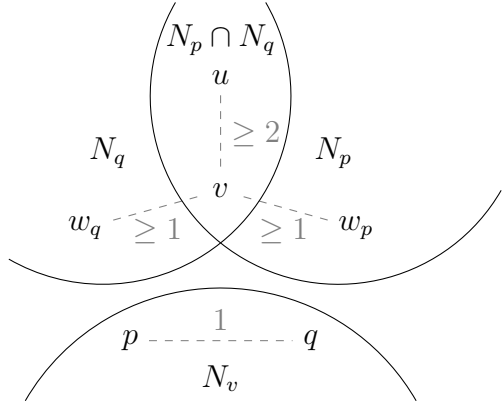
    
\begin{proof}[Proof of Theorem~\ref{thm: Graph theorem}]
We construct a graph on $n$ vertices $a_1,\dots, a_{k}, b_1,\dots, b_{\ell}$ where $k=\left\lceil\frac{n-1}{3}\right\rceil$ and $\ell = n-k=\left\lfloor \frac{2n+1}{3}\right\rfloor$.  Set $\wt(a_i,a_j)=2$, $\wt(b_i,b_j)=1$ and $\wt(a_i,b_j)=0$ for all relevant $i,j$.
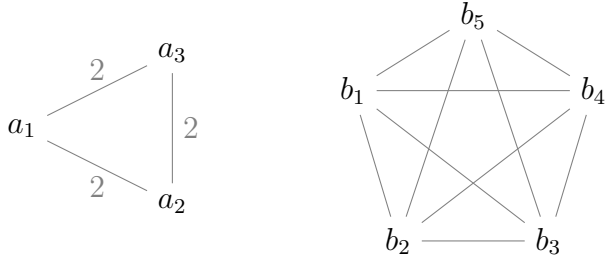
\begin{figure}
    \centering
    \begin{tikzpicture}
    \node (a1) at (-5, 0.5) {$a_1$};
    \node (a2) at (-3, -0.5) {$a_2$};
    \node (a3) at (-3, 1.5) {$a_3$};
    \draw [gray, below] (a1) -- (a2) node [midway] {$2$};
    \draw [gray, above] (a1) -- (a3) node [midway] {$2$};
    \draw [gray, right] (a2) -- (a3) node [midway] {$2$};
    
    \node (b1) at (-0.6, 1) {$b_1$};
    \node (b2) at (0, -1) {$b_2$};
    \node (b3) at (2, -1) {$b_3$};
    \node (b4) at (2.6, 1) {$b_4$};
    \node (b5) at (1, 2) {$b_5$};
    
    \draw [gray] (b1) -- (b2);
    \draw [gray] (b1) -- (b3);
    \draw [gray] (b1) -- (b4);
    \draw [gray] (b2) -- (b3);
    \draw [gray] (b2) -- (b4);
    \draw [gray] (b3) -- (b4);
    \draw [gray] (b1) -- (b5);
    \draw [gray] (b2) -- (b5);
    \draw [gray] (b3) -- (b5);
    \draw [gray] (b4) -- (b5);
\end{tikzpicture}
    \caption{A graph with $n=8$ vertices with $k=3$, $\ell=5$, and total weight $\phi_8=2{3 \choose 2}+{5\choose 2}=16$.}
\end{figure}

We can now compute the total weight of any 4-component subgraph:
$$
\begin{array}{ccc}
\wt(\langle a_p,a_q,a_r,a_s\rangle)=12,& 
\wt(\langle a_p,a_q,a_r,b_s\rangle)=6,&
\wt(\langle a_p,a_q,b_r,b_s\rangle)=3, \\
\wt(\langle a_p,b_q,b_r,b_s\rangle)=3,& 
\wt(\langle b_p,b_q,b_r,b_s\rangle)=6.
\end{array}
$$
Each is at least 3 so $G\in \Phi_n$.  Next note that $\wt(G) = 2\cdot {\left\lceil\frac{n-1}{3}\right\rceil\choose 2}+ {\left\lfloor \frac{2n+1}{3}\right\rfloor\choose 2}$.  
Since $\phi_n$ is the minimum total weight amongst all such graphs, $\phi_n\le 2\cdot {\left\lceil\frac{n-1}{3}\right\rceil\choose 2}+ {\left\lfloor \frac{2n+1}{3}\right\rfloor\choose 2}$.  This upper bound is equal to $\left\lceil\frac{1}{3}n(n-2)\right\rceil$ by a straightforward case-wise proof based on the class of $n$ mod $3$.  

We prove the reverse inequality by induction.  It is obvious that $\phi_4=3$, since the total weight of a 4-vertex graph is the same as the weight of its only 4-vertex subgraph.  
Hence the theorem holds for $n=4$.

Now fix some $n\ge 5$.  As $\phi_n$ is defined to be a minimum, there is some graph $G$ on $n$ vertices, whose every 4-vertex subgroup has weight 3, and for which $\wt(G)=\phi_n$.  By Lemma \ref{lem:vertex_degree_5comp}, there is some vertex $v$ with $d(v)\ge \left\lceil\frac{2n-4}{3}\right\rceil$.  Set $G'$ to be the $n-1$ vertex subgraph spanned by $V(G)\setminus \{v\}$.  We may inductively assume that $\wt(G')\ge \phi_{n-1}=\left\lceil\frac{1}{3}(n-1)(n-3)\right\rceil
$.
Thus, 
$$
\phi_n=\wt(G) = \wt(G')+d(v)\ge\phi_{n-1}+d(v)\ge  \left\lceil\frac{1}{3}(n-1)(n-3)\right\rceil + \left\lceil\frac{2n-4}{3}\right\rceil.$$

That the rightmost term in the above inequality is precisely equal to $\left\lceil\frac{1}{3}n(n-2)\right\rceil$ follows from a casewise argument depending on the class of $n$ mod $3$.
Therefore we have shown
$\phi_n\ge \left\lceil\frac{1}{3}n(n-2)\right\rceil$
for all $n$, completing the proof. 
\end{proof}

\bibliographystyle{plain}

\bibliography{biblio}

\end{document}